\def\cal{\EuScript}
\newcommand{\eps}{\varepsilon}
\newcommand{\norm}[1]{\ensuremath{\| #1 \|_2}}
\def\CP{\cal P}
\def\CPs{\CP{\kern-0.8pt}}
\def\mingmres{\min_{\stackrel{\scriptstyle{p\in \CPs_k}}
                            {\scriptstyle{p(0)=1}}}}
\def\mingmresq{\min_{\stackrel{\scriptstyle{q\in \CPs_{k-2}}}
                            {\scriptstyle{q(0)=1}}}}
\newcommand{\C}{\mathbb{C}}
\newcommand{\Z}{\mathbb{Z}}
\newcommand{\Cn}{\ensuremath{\C^n}}
\newcommand{\Cnn}{\ensuremath{\C^{n\times n}}}
\newcommand{\R}{\mathbb{R}}
\newcommand{\POLY}{{\cal P}}
\newcommand{\KRY}{{\cal K}}
\newcommand{\ORDER}{{\cal O}}
\newcommand{\COND}{\kappa}
\newcommand{\DISK}[1]{\Delta_{#1}}
\newcommand{\CDISK}[1]{\overline{\Delta}_{#1}}
\newcommand{\PSA}{{\sigma_\eps}}
\newcommand{\SPEC}{\sigma}
\newcommand{\FOV}{{W}}
\newcommand{\CG}{{\Omega_{\rm CG}}}
\newcommand{\NRAD}{\mu}
\newcommand{\SRAD}{\rho}
\newcommand{\RANK}{{\rm rank}}
\newcommand{\RANGE}{{\rm Ran}}
\newcommand{\SPAN}{{\rm span}}
\def\wh#1{\widehat{#1}}
\def\BA{{\bf A}}
\def\BAs{{\bf A}\kern-.7pt}
\def\BE{{\bf E}}
\def\BH{{\bf H}}
\def\BHtil{\widetilde{{\bf H}}}
\def\BI{{\bf I}}
\def\BJ{{\bf J}}
\def\BItil{\widetilde{{\bf I}}}
\def\BP{{\bf P}}
\def\BU{{\bf U}}
\def\BV{{\bf V}}
\def\BX{{\bf X}}
\def\BLambda{\mbox{\boldmath$\Lambda$}}
\def\BPi{\mbox{\boldmath$\Pi$}}
\def\Bb{{\bf b}}
\def\Bc{{\bf c}}
\def\Be{{\bf e}}
\def\Bf{{\bf f}}
\def\Br{{\bf r}}
\def\Bv{{\bf v}}
\def\Bx{{\bf x}}
\def\adj{^*}
\def\Bzero{{\bf 0}}
\def\dop{{\rm d}}
\def\iop{{\rm i}}
\def\evp{EV$'$}
\def\cev{C_{\mbox{\scriptsize{\sc ev}}}}
\def\cfov{C_{\mbox{\scriptsize{\sc fov}}}}
\def\cpsa{C_{\mbox{\scriptsize{\sc psa}}}}
\def\Cro{\cfov}
\def\rev{\rho_{\mbox{\scriptsize{\sc ev}}}}
\def\rfov{\rho_{\mbox{\scriptsize{\sc fov}}}}
\def\rpsa{\rho_{\mbox{\scriptsize{\sc psa}}}}
\newcommand{\exhead}[1]{\subsection*{$\bullet$\ #1}}
\font\sevenex=cmex7
\def\sepshat{{\mbox{\sevenex\symbol{"62}}\!\!\!\eps}}
\active \gdef@{\mkern1mu}} 
\def\mydate{\number\day\ {\ifcase\month \or January\or February\or
              March\or April\or May\or June\or July\or August\or
              September\or October\or November\or December\fi} \number\year}
\def\mymy{{\ifcase\month \or January\or February\or
              March\or April\or May\or June\or July\or August\or
              September\or October\or November\or December\fi} \number\year}
\title{How Descriptive are GMRES Convergence Bounds?}
\author{Mark Embree\thanks{Department of Mathematics,
Virginia Tech, Blacksburg, VA 24061 ({\tt embree@vt.edu}).
The original version of this work appeared as 
Oxford University Computing Laboratory Technical Report TR 99/08 in June 1999.
This updated version corrects some errors, improves the presentation,
 and includes additional illustrations.}}
\begin{document}

\maketitle

\begin{abstract}
GMRES is a popular Krylov subspace method for solving linear systems 
of equations involving a general non-Hermitian coefficient matrix.
The conventional bounds on GMRES convergence involve 
polynomial approximation problems in the complex plane.  
Three popular approaches pose this approximation problem on the spectrum, 
the field of values, or pseudospectra of the coefficient matrix. 
We analyze and compare these bounds,
illustrating with six examples the success and failure of each.
When the matrix departs from normality due only to a low-dimensional 
invariant subspace, we discuss how these bounds can be adapted 
to exploit this structure.
Since the Arnoldi process that underpins GMRES provides approximations 
to the pseudospectra, one can estimate the GMRES convergence bounds 
as an iteration proceeds.
\end{abstract}

\begin{keywords} 
Krylov subspace methods, GMRES convergence, nonnormal matrices,
pseudospectra, field of values
\end{keywords}


\pagestyle{myheadings}
\thispagestyle{plain}
\markboth{MARK EMBREE}{GMRES CONVERGENCE BOUNDS}

\section{Introduction}

Many algorithms for solving large, sparse systems of linear equations 
construct iterates that attempt to minimize the residual norm over 
all candidates in an affine Krylov subspace whose dimension grows at each step.
For non-Hermitian matrices, the GMRES algorithm of Saad and Schultz~\cite{ss86}
generates such optimal iterates.
This method, which uses the Arnoldi process to generate an orthonormal basis 
for the Krylov subspace, becomes intractable for problems that converge slowly.
Practical algorithms, such as restarted GMRES, BiCGSTAB, QMR 
(see, e.g., \cite[Ch.~5]{greenbaum97}, \cite[Ch.~7]{Saa03}),
reduce this computational expense but compromise the optimality,
and it is tough to characterize the convergence that results.
(For an indication of the complexity of restarted GMRES, see~\cite{Emb03}.)
The residual norms cannot be smaller 
than those produced by GMRES, since the algorithms choose iterates from
the same Krylov subspace; sometimes they can be related to the 
GMRES residual norm, as in the case of~QMR~\cite{fn91}.
Understanding GMRES convergence, facilitated by its optimality property, 
can thus be a step toward convergence analysis for other algorithms. 
Insight about GMRES convergence can also
inform the construction and evaluation of preconditioners
for non-Hermitian matrices.

Given a system of linear equations $\BA\Bx=\Bb$, with $\BA\in\Cnn$ and
$\Bx,\Bb\in\Cn$, the GMRES algorithm~\cite{ss86} iteratively generates solution
estimates $\Bx_k \approx \Bx$ based on an initial guess $\Bx_0$.
The residuals induced by these iterates, $\Br_k:=\Bb-\BA\Bx_k$, satisfy 
the minimum residual property,
\begin{equation} \label{opt}
\norm{\Br_k} = \mingmres \norm{p(\BA)\Br_0},
\end{equation}
where $\POLY_k$ denotes the set of polynomials of degree $k$ or less.

What properties of the coefficient matrix $\BA$ govern convergence?
In this work, we examine three proposed answers
to this question (see~\cite{greenbaum97}): 
eigenvalues with eigenvector condition number,
the field of values, and pseudospectra. 
When $\BA$ is normal (i.e., it has an orthogonal basis
of eigenvectors or, equivalently, it commutes with its adjoint),
convergence can be accurately bounded using the eigenvalues alone.
This is not the case for nonnormal matrices, as the 
construction of Greenbaum, Pt\'ak, and Strako\v{s}
vividly illustrates~\cite{gps96}:
given any set of eigenvalues $\SPEC(\BA)$, one can construct 
$\BA$ and $\Br_0$ to produce any (monotonically decreasing)
GMRES residual norms.
When $\BA$ is far from normal, the residual norms 
$\{\norm{\Br_k}\}$ often exhibits a period of initial stagnation 
before converging at a quicker asymptotic rate.  
The bounds we study here essentially differ in two ways:
how they account for this delay due to nonnormality, 
and the sets upon which they base the asymptotic rate convergence.

Section~2 describes three standard GMRES convergence bounds.
These characterizations can significantly overestimate the norm of
the residual when nonnormality is only associated with a few eigenvalues 
(e.g., several nearly aligned eigenvectors orthogonal to all other eigenvectors).  
To circumvent this shortcoming, we apply spectral 
projectors to modify the traditional formulations.  
This strategy bounds GMRES convergence using the condition numbers
of individual eigenvalues, and also leads to flexible generalizations
of the field of values and pseudospectra bounds.
In Section~3, we present six examples to illustrate that 
the three standard bounds can each give significant overestimates, 
but each can also be rather descriptive.
The bounds are also compared via the relationships between the 
eigenvectors, field of values, and pseudospectra.  
These examples highlight a strength of pseudospectral bounds
(elaborating an observation of Driscoll, Toh, and Trefethen~\cite[p.~564]{dtt98}):
by sampling the bound based on the $\eps$-pseudospectrum over a range of
$\eps$ values, one can potentially capture \emph{different phases of convergence}
via the envelope of these bounds.
While the cost of computing pseudospectra can deter the use of this bound
for large-scale problems, in Section~\ref{sec:adaptive} we suggest an approach 
for obtaining GMRES convergence \emph{estimates} at a lower computational 
expense based on approximate pseudospectra taken from the Hessenberg matrix
generated by the Arnoldi process within the standard GMRES implementation.

Though we are concerned with GMRES convergence for a linear system with 
a specific initial residual, all the analysis described here
first employs the inequality
\begin{equation}
\label{ideal}
 \norm{\Br_k} \le \mingmres \norm{p(\BA)}\,\norm{\Br_0}, 
\end{equation}
and then studies $\norm{p(\BA)}$ independent from $\Br_0$.
This approach leads to upper bounds for worst case GMRES convergence.
With a carefully crafted example, Toh proved that this inequality 
can be arbitrarily misleading for some nonnormal matrices~\cite{toh97}.  
There may be \emph{no} vector $\Br_0\in\Cn$ for which
$\norm{p(\BA)}$ equals $\norm{\Br_k}/\norm{\Br_0}$ at iteration $k$.
Examples of this extreme behavior are thought to be rare in 
practice~\cite[\S3.6]{toh96} and thus we are typically content to make
the inequality~(\ref{ideal}) and proceed with the analysis of 
$\norm{p(\BA)}$ that follows from it.

\section{Three Convergence Bounds and Variations} \label{sec:survey}

In this section, we describe three common convergence bounds for GMRES 
based on eigenvalues with the eigenvector condition number,
the field of values, and pseudospectra.  These bounds all fail to
describe convergence accurately when nonnormality is primarily 
associated with a few eigenvalues.  One can use spectral projectors 
to decouple sets of eigenvalues, leading to localized versions of 
these bounds that can be sharper than the conventional versions.

\subsection{Eigenvalues with Eigenvector Conditioning}

The first convergence bound suggested for GMRES predicts
convergence at a rate determined by the set of eigenvalues of $\BA$, 
denoted $\SPEC(\BA)$.  If $\BA$ is normal, $\SPEC(\BA)$ determines 
convergence, since for any polynomial $p$
\[ \norm{p(\BA)} = \max_{\lambda \in \SPEC(\BA)} |p(\lambda)|.\]
Nonnormality can impede the onset of convergence at this
spectral rate; to account for such delay, this bound scales the 
spectral convergence behavior by the condition number of the matrix
having the eigenvectors of $\BA$ as its columns~\cite{ees83,ss86}.  
Provided that $\BA$ is diagonalizable, $\BA=\BV\BLambda \BV^{-1}$, 
\begin{eqnarray*}
 \norm{\Br_k} = \mingmres \norm{p(\BA)\Br_0} 
   &\le& \norm{\BV p(\BLambda) \BV^{-1}}\,\norm{\Br_0} \\[-8pt]
   &\le& \norm{\BV} \norm{\BV^{-1}} \norm{p(\BLambda)} \norm{\Br_0},
\end{eqnarray*}
implying the bound
$$ \frac{\norm{\Br_k}}{\norm{\Br_0}} \le 
   \COND(\BV) \mingmres \max_{\lambda\in\SPEC(\BA)} |p(\lambda)|. 
    \leqno{{\rm (EV)}}$$
Here, $\COND(\BV) := \norm{\BV}\,\norm{\BV^{-1}}$ is the 2-norm condition
number of the eigenvector matrix~$\BV$.\ \ If $\BA$ is normal, then $\COND(\BV)=1$;
if, in addition, the eigenvalues are real, then
(EV) reduces to the standard convergence bound for MINRES~\cite{fischer96}.\ \ 
If $\BA$ is nonnormal, then $\COND(\BV)>1$ and determining the optimal 
value of $\COND(\BV)$ can be a challenge~\cite{gs94}; this task is
further complicated if $\BA$ has repeated eigenvalues.  
Throughout this work, we scale the columns of $\BV$ 
to have unit 2-norm; provided each eigenvalue of $\BA$ is simple, 
this scaling ensures that $\COND(\BV)$ is no more than $\sqrt{n}$ times 
its optimal value, where $n$ is the matrix dimension~\cite{vds69}.

Since $\SPEC(\BA)$ is a discrete point set for finite-dimensional
matrices, the polynomial approximation problem in~(EV) will be 
zero when $k$ reaches the matrix dimension, $k=n$: 
\emph{thus the bound (EV) captures the finite termination of GMRES}.\ \ 
However (despite the small size of most of our examples),
we want to apply GMRES for large $n$,
in the hope of obtaining convergence after $k\ll n$ iterations.
Thus the bound (EV) is often employed with $\SPEC(\BA)$ replaced
by some compact set $\Omega \supset \SPEC(\BA)$.
For example, if all eigenvalues of $\BA$ are real and positive,
one might take $\Omega$ to be the real interval connecting the
extreme eigenvalues, as is common in analysis of the conjugate
gradient method; see, e.g.,~\cite[chap.~38]{TB97}.
(Better bounds result from including outlying eigenvalues as
singletons, and bounding the rest of the spectrum in aggregate~\cite{vdSV86,vdvv93}.)

The constant $\COND(\BV)$ in (EV) reflects the departure of $\BA$ 
from nonnormality.  The normalized residual norms $\norm{\Br_k}/\norm{\Br_0}$
form a nonincreasing sequence starting with the value~1 at $k=0$.
Thus if $\COND(\BV)$ is large, the bound (EV) cannot describe 
convergence at least until the iteration $k$ at which the polynomial minimization 
term is as small as $1/\COND(\BV)$.\ \ 
Even then, (EV)~can be grossly inaccurate.
For example, $\COND(\BV)$ can be large because all the eigenvectors 
are ill-conditioned, or if only two eigenvectors are nearly aligned.  
In the latter case, the bound usually fails to predict convergence, 
while it may be more appropriate in the former case.  
These situations are illustrated in Examples~B and~E of Section~3.
(For a discussion about the shortcomings of scalar measures of 
nonnormality see~\cite[chap.~48]{TE05}.)

In an effort to avoid this difficulty,
the bound (EV) can be adapted by considering the conditioning 
of individual eigenvalues.  
Suppose $\lambda\in\SPEC(\BA)$ is simple, 
having left and right eigenvectors $\wh{\Bv}$ and $\Bv$.
Then the condition number of $\lambda$~\cite[\S2.8]{aep} is
\[ \COND(\lambda) := \frac{\norm{\wh{\Bv}}\,\norm{\Bv}}{|\wh{\Bv}^*\Bv|} 
                  = {1\over \cos\angle(\wh{\Bv},\Bv)}.\]
Using these condition numbers leads to a bound that can be  
much sharper than~(EV).  This result can be seen as a special case 
of a theorem of Joubert~\cite[Thm.~3.2(4)]{Jou94a}, who presents
it in the context of the Jordan canonical form.

\medskip
\begin{theorem} \label{ewbound}
Suppose every eigenvalue $\lambda_j$ of $\BA$ is simple.  Then 
for any $p\in\POLY_k$,
\begin{equation} \label{ewcond}
\norm{p(\BA)} \le \sum_{j=1}^n \COND(\lambda_j)\,|p(\lambda_j)|.
\end{equation}
\end{theorem}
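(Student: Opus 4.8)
The plan is to use a spectral decomposition of $\BA$ into rank-one projectors and the triangle inequality. Since every eigenvalue $\lambda_j$ is simple, $\BA$ is diagonalizable, and the spectral projector onto the eigenspace for $\lambda_j$ is the rank-one matrix $\BP_j := \Bv_j\wh{\Bv}_j^*/(\wh{\Bv}_j^*\Bv_j)$, where $\Bv_j$ and $\wh{\Bv}_j$ are the right and left eigenvectors. These projectors satisfy $\sum_{j=1}^n \BP_j = \BI$ and $\BA\BP_j = \lambda_j\BP_j$, hence $p(\BA)\BP_j = p(\lambda_j)\BP_j$ for any polynomial $p$. Writing $p(\BA) = p(\BA)\sum_j \BP_j = \sum_j p(\lambda_j)\BP_j$ and applying the triangle inequality gives
$$
 \norm{p(\BA)} \le \sum_{j=1}^n |p(\lambda_j)|\,\norm{\BP_j}.
$$

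The remaining step is to identify $\norm{\BP_j}$ with $\COND(\lambda_j)$. Since $\BP_j = \Bv_j\wh{\Bv}_j^*/(\wh{\Bv}_j^*\Bv_j)$ is a rank-one matrix of the form $\Bu\Bw^*$ (up to scalar), its 2-norm is $\norm{\Bu}\norm{\Bw}$; thus
$$
 \norm{\BP_j} = \frac{\norm{\Bv_j}\,\norm{\wh{\Bv}_j}}{|\wh{\Bv}_j^*\Bv_j|} = \COND(\lambda_j),
$$
which is exactly the definition of the eigenvalue condition number recalled above. Substituting this into the displayed inequality yields \eqref{ewcond}.

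I expect the argument to be essentially routine; the only point requiring a little care is the rank-one norm identity and making sure the normalization $\wh{\Bv}_j^*\Bv_j$ in the definition of $\BP_j$ matches the one in $\COND(\lambda_j)$ — but both are the same scalar, so the projector is well-defined and independent of how $\Bv_j,\wh{\Bv}_j$ are individually scaled. A secondary detail worth stating explicitly is \emph{why} $\sum_j \BP_j = \BI$: this is the resolution of the identity for a diagonalizable matrix with distinct eigenvalues, equivalently the statement that the right eigenvectors form a basis whose dual basis is (proportional to) the left eigenvectors. No hypothesis beyond simplicity of the spectrum is needed, and in particular nothing about $p$ other than that it is a polynomial, so the bound holds for every $p\in\POLY_k$ as claimed.
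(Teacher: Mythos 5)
Your proposal is correct and follows essentially the same route as the paper: the paper writes $p(\BA)=\BV p(\BLambda)\BV^{-1}=\sum_j p(\lambda_j)\Bv_j\wh{\Bv}_j^*$ with the normalization $\wh{\Bv}_j^*\Bv_j=1$ built into $\BV^{-1}\BV=\BI$, which is exactly your spectral-projector decomposition, and then applies the triangle inequality and the rank-one norm identity $\norm{\Bv_j\wh{\Bv}_j^*}=\norm{\Bv_j}\,\norm{\wh{\Bv}_j}$. The only cosmetic difference is that you carry the scalar $\wh{\Bv}_j^*\Bv_j$ explicitly to keep the projector scale-invariant, whereas the paper absorbs it into the choice of $\BV^{-1}$.
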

\begin{proof}
Since $\BA$ has simple eigenvalues, it is diagonalizable,
$\BA=\BV\BLambda \BV^{-1}.$ Let $\{\wh{\Bv}_j\}_{j=1}^n$ be the left eigenvectors
($\wh{\Bv}_j^*$ is the $j$th row of $\BV^{-1}$) and $\{\Bv_j\}_{j=1}^n$
the corresponding right eigenvectors (columns of $\BV$), with
$\BLambda_{jj} = \lambda_j$. Then
$$ \norm{p(\BA)} = \norm{\BV p(\BLambda) \BV^{-1}}
               = \biggm\| \sum_{j=1}^n p(\lambda_j) \Bv_j \wh{\Bv}_j^*\biggr\|_2
               \le \sum_{j=1}^n |p(\lambda_j)|\, \norm{\Bv_j \wh{\Bv}_j^*}.$$
The result follows from noting that since $\wh{\Bv}_j^*\Bv_j^{} = 1$ by
construction,
 $\norm{\Bv_j \wh{\Bv}_j^*} = \norm{\wh{\Bv}_j}\,\norm{\Bv_j} = 
\norm{\wh{\Bv}_j}\,\norm{\Bv_j}/|\wh{\Bv}_j^*\Bv_j| = \COND(\lambda_j)$.
\quad
\end{proof}

\medskip
The quantity on the right of equation~(\ref{ewcond})
is the 1-norm of $p(\BLambda) \Bc$ where $c_j = \kappa(\lambda_j)$.
Using a norm equivalence, we can obtain a conventional 
GMRES problem involving a normal matrix, but with a special right hand side.

\medskip

\begin{corollary} \label{evprime}
Define the components of $\Bc\in\Cn$ to be $c_j = \kappa(\lambda_j)$. Then
$$ \frac{\norm{\Br_k}}{\norm{\Br_0}} 
 \le \sqrt{n} \mingmres \norm{p(\BLambda)\Bc}.
   \leqno{{\rm (EV')}}$$
\end{corollary}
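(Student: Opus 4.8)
The plan is to read off the right-hand side of~(\ref{ewcond}) as a vector $1$-norm and then trade it for a $2$-norm using an elementary norm equivalence on $\Cn$, at the cost of the factor $\sqrt{n}$. Concretely, since $\BLambda$ is the diagonal matrix of eigenvalues and $\Bc\in\Cn$ has entries $c_j=\COND(\lambda_j)$, the $j$th component of $p(\BLambda)\Bc$ is $p(\lambda_j)\,c_j$, so that $\sum_{j=1}^n \COND(\lambda_j)\,|p(\lambda_j)|$ is exactly $\|p(\BLambda)\Bc\|_1$. Thus Theorem~\ref{ewbound} can be restated as $\norm{p(\BA)}\le\|p(\BLambda)\Bc\|_1$ for every $p\in\POLY_k$.

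Next I would apply the standard inequality $\|\Bv\|_1\le\sqrt{n}\,\norm{\Bv}$, valid for all $\Bv\in\Cn$ (Cauchy--Schwarz against the vector of all ones), with $\Bv=p(\BLambda)\Bc$. This gives $\norm{p(\BA)}\le\sqrt{n}\,\norm{p(\BLambda)\Bc}$ for every $p\in\POLY_k$. Restricting to the admissible polynomials (those with $p(0)=1$), substituting into~(\ref{ideal}), and dividing through by $\norm{\Br_0}$ then yields $\norm{\Br_k}/\norm{\Br_0}\le\mingmres\norm{p(\BA)}\le\sqrt{n}\,\mingmres\norm{p(\BLambda)\Bc}$, which is~(\evp).

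There is essentially no obstacle: all of the substance is already in Theorem~\ref{ewbound}, and the corollary is just the identification of the sum as a $1$-norm followed by a one-line norm-equivalence estimate. The only point deserving a moment's care is the passage from a termwise inequality between two minimization problems over the \emph{same} feasible set to an inequality between their minima; this is immediate upon evaluating the left-hand minimization at a minimizer of the right-hand one. I would also note in passing that $\mingmres\norm{p(\BLambda)\Bc}$ is itself the GMRES residual norm after $k$ steps for the \emph{normal} matrix $\BLambda$ with initial residual $\Bc$, so the factor $\sqrt{n}$ is the whole price of reducing the nonnormal problem to a normal one.
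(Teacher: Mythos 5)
Your proposal is correct and matches the paper's own reasoning exactly: the text preceding Corollary~\ref{evprime} identifies the right-hand side of~(\ref{ewcond}) as $\|p(\BLambda)\Bc\|_1$ and invokes the norm equivalence $\|\Bv\|_1\le\sqrt{n}\,\norm{\Bv}$, precisely as you do. Your added remarks on passing the termwise inequality through the minimization and on interpreting the result as a GMRES problem for the normal matrix $\BLambda$ are sound and consistent with the paper's framing.
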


Note that $\|\Bc\|_2 \ge \sqrt{n}$, since each $c_j \ge 1$.
If $c_j$ is large, the corresponding eigenvalue $\lambda_j$ is
ill-conditioned.  If only a few eigenvalues of $\BA$ are ill-conditioned,
then the GMRES problem on the right-hand side of (EV$'$) can exhibit 
an initial phase of rapid convergence 
(as one could choose a polynomial $p\in\CP_k$
with roots at the ill-conditioned eigenvalues, eliminating those
components from $\Bc$), leading to more rapid convergence than
one would expect for a typical initial residual of similar 
magnitude.  Figure~\ref{supg:evprime} shows the ability 
of (EV$'$) to describe convergence for a highly nonnormal
matrix from a convection-diffusion problem.

When $\COND(\BV)$ is large (or $\BA$ is nondiagonalizable,
which we regard as $\COND(\BV)=\infty$),
it may be appealing to obtain GMRES bounds with smaller leading
constants, at the expense of polynomial approximation problems on
larger sets in the complex plane.

\subsection{Field of Values}

The \emph{field of values} (or \emph{numerical range}),
\begin{equation} \label{eq:fovdef}
 \FOV(\BA) := \left\{\,\frac{\Bx\adj\BA\Bx}{\Bx\adj\Bx}:
                 \Bx\in\Cn,\, \Bx\ne0 \right\},
\end{equation}
is a popular alternative to eigenvalues
for understanding the behavior of functions of nonnormal matrices. 
The field of values is always a closed, convex set that contains $\SPEC(\BA)$,
and so it is possible that $0\in\FOV(\BA)$ even when $\BA$ is nonsingular.
When $\BA$ is far from normal, $\FOV(\BA)$ can contain points far beyond the
convex hull of $\SPEC(\BA)$.
While the eigenvalues of $\BA$ can be sensitive to perturbations,
the field of values is robust:
\[ \FOV(\BA+\BE) \subseteq \FOV(\BA) + \{ z\in\C: |z| \le \norm{\BE}\}.\]
Moreover, the extreme eigenvalues of the Hermitian 
($(\BA+\BAs^*)/2$) and skew-Hermitian $(\BA-\BAs^*)/2$ parts of $\BA$
give tight bounds on the real and imaginary extent of $\FOV(\BA)$;
see~\cite[chap.~2]{hojo2} for the basic algorithm for computing $\FOV(\BA)$, 
and Bracconier and Higham~\cite{bh96} for an approach for large-scale problems.
For many of the examples that follow, we use Higham's {\tt fv} code~\cite{HighTMT}. 
For additional properties of $\FOV(\BA)$, 
see~\cite{gustrao}, \cite[chap.~1]{hojo2}, and \cite[chap.~17]{TE05}.

To develop a GMRES bound based on the field of values, 
we seek an alternative to~(EV) that replaces maximization 
over $\SPEC(\BA)$ by maximization over $\FOV(\BA)$.
Enlarging this maximizing set generally increases the polynomial 
approximation term, which will hopefully be counterbalanced by 
a leading constant that is smaller than $\kappa(\BV)$ in~(EV).

How does $\|p(\BA)\|$ relate to $|p(z)|$ for $z\in\FOV(\BA)$?
This question dates back at least to the 1960s, but major progress 
has been made in recent years by Crouzeix and collaborators.
Indeed, Crouzeix has shown that for any $p\in\CP_k$, 
there exists a constant $\Cro$ (independent of $\BA$, the degree $k$, 
and the dimension $n$) such that
\begin{equation} \label{eq:cro}
 \|p(\BA)\| \le \Cro \max_{z\in\FOV(\BA)} |p(z)|.
\end{equation}
How large is $\Cro$?
Crouzeix~\cite{Cro07} proved that $2 \le \Cro \le 11.08$ and conjectured that $\Cro = 2$.
More recently, Crouzeix and Palencia~\cite{CP17} showed that $2 \le \Cro \le 1+\sqrt{2}$.
We thus have
$$ \frac{\norm{\Br_k}}{\norm{\Br_0}} 
\le \Cro \mingmres \max_{z\in\FOV(\BA)} |p(z)|,
   \leqno{{\rm (FOV)}} $$
with $\Cro \le 1+\sqrt{2}$.
We note that this bound complements important earlier work by Eiermann.%
\footnote{To the best of our knowledge, Eiermann was the first to use
the field of values to bound the convergence
of iterative linear solvers~\cite{eiermann93,eiermann97}.
Working before Crouzeix's analysis, he bounded $\FOV(\BA)$ by an ellipse 
in $\C$, then used Chebyshev polynomials to bound the optimal polynomial
on this ellipse.
Eiermann and Ernst proposed a different field of values bound
involving $\FOV(\BA^{-1})$ in~\cite[sect.~6]{EE01}.} 
While Crouzeix's conjecture is known to hold for certain classes of matrices,
in all the illustrations that follow we use $\cfov=1+\sqrt{2}=2.4142\ldots.$

\medskip
The bound~(FOV) has a small leading constant and is appealingly simple,
but it suffers from several notable limitations.

\begin{remunerate}
\item
It is possible that $0\in\FOV(\BA)$ even when $\BA$ is nonsingular
($0 \not\in \SPEC(\BA)$).
Then
\[ \mingmres \max_{z\in\FOV(\BA)} |p(z)|
   \ge \mingmres |p(0)| = 1,\]
and so the bound~(FOV) fails to describe any convergence,
despite the fact that the full GMRES algorithm must converge.
This limitation makes (FOV)  unsuitable for Hermitian indefinite $\BA$,
not to mention nonnormal problems for which $0$ is embedded
within $\FOV(\BA)$ despite the spectrum being relatively well 
separated from the origin. 

\item Since $\FOV(\BA)$ is a convex set in $\C$, it hides
information about the \emph{distribution of the eigenvalues} within $\FOV(\BA)$.  
Suppose $\BA$ is Hermitian with 
$\FOV(\BA) = [\alpha,\beta] \subset \R$ for $0 < \alpha < \beta$.
GMRES will converge very differently if $\SPEC(\BA)$ is uniformly distributed
throughout $[\alpha,\beta]$, or if $\SPEC(\BA)$ consists of a cluster near $\alpha$ 
and a single eigenvalue near $\beta$, yet both scenarios give the same $\FOV(\BA)$.
The bound~(FOV) cannot capture so-called \emph{superlinear} convergence effects
associated with isolated outlying eigenvalues~\cite{dtt98,vdvv93}.

\item Similarly, suppose $\FOV(\BA)$ contains points far from $\SPEC(\BA)$ due to 
a large departure from normality associated with a low-degree invariant 
subspace of $\BA$.
In the GMRES approximation problem, the optimizing polynomial $p$ could 
target these ill-conditioned eigenvalues, effectively eliminating the nonnormality 
from the problem at an early iteration: later iterations can focus on the 
rest of the spectrum.  In contrast, the bound~(FOV) must continue to 
optimize $p$ over all $\FOV(\BA)$ at every iteration. 

\item As the constant term $\cfov$ in~(FOV) is small and the approximation
problem on a convex region predicts asymptotic linear convergence 
(see equation~(\ref{convex})), the bound~(FOV)
cannot be entirely descriptive for iterations that initially stagnate
before converging at a more rapid asymptotic rate.
This behavior is observed by Higham and Trefethen
for matrix powers~\cite{ht93}, and identified
by Ernst in the context of GMRES applied to convection-diffusion 
problems~\cite{Ern00}, as we see in Section~\ref{sec:convdiff}.
\end{remunerate}

\medskip
We can remedy some limitations of (FOV) by
working with projectors onto invariant subspaces of $\BA$.  
Partition the spectrum of $\BA$ into disjoint sets $\Lambda_j$, 
such that $\SPEC(\BA) = \cup_{j=1}^m \Lambda_j$.  Define the spectral projector
\[ \BP_j :=  \frac{1}{2\pi@\iop} \int_{\Gamma_j} (z\BI-\BA)^{-1}\,\dop z,\]
where $\Gamma_j$ is the union of Jordan curves containing the eigenvalues
$\Lambda_j$ in their collective interior, but not enclosing any other
eigenvalues.  Then $\BP_j$ is a projector onto the invariant
subspace of $\BA$ associated with the eigenvalues 
$\Lambda_j$ (see, e.g.,~\cite[sect.~I.5.3]{kato}).

\medskip
\begin{theorem} \label{gensp} 
Let $\{\Lambda_j\}_{j=1}^m$ be a partition of $\SPEC(\BA)$
into $m$ disjoint sets.  For each $1\le j\le m$,  let $\BP_j$
be the spectral projector onto the invariant subspace 
associated with $\Lambda_j$, and let the columns of
$\BU_j^{n\times\RANK(\BP_j)}$ be an orthonormal basis for
$\RANGE(\BP_j)$.
Then for any polynomial $p\in\POLY_k$, 
\[ \norm{p(\BA)} \le \sum_{j=1}^m 
   \norm{\BP_j}\,\norm{p(\BU_j\adj\BA\BU_j)}.\]
\end{theorem}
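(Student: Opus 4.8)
The plan is to split $p(\BA)$ along the invariant subspaces using the resolution of the identity furnished by the spectral projectors, and then on each piece to trade $\BA$ for its compression $\BU_j\adj\BA\BU_j$. First I would invoke the standard fact that a partition of $\SPEC(\BA)$ yields spectral projectors that resolve the identity, $\sum_{j=1}^m \BP_j = \BI$ (see, e.g., \cite[sect.~I.5.3]{kato}); multiplying $p(\BA)$ by this and using the triangle inequality reduces the claim to showing
$$ \norm{p(\BA)\,\BP_j} \;\le\; \norm{\BP_j}\,\norm{p(\BU_j\adj\BA\BU_j)} \qquad (1\le j\le m). $$

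For the $j$th term, write $B_j := \BU_j\adj\BA\BU_j$. Because $\RANGE(\BP_j)=\RANGE(\BU_j)$ is $\BA$-invariant, each column $\BA\Bu$ of $\BA\BU_j$ lies in $\RANGE(\BU_j)$, so $\BA\BU_j = \BU_j C$ for some matrix $C$; left-multiplying by $\BU_j\adj$ and using $\BU_j\adj\BU_j = \BI$ identifies $C = B_j$, i.e.\ $\BA\BU_j = \BU_j B_j$. A one-line induction then gives $\BA^{\ell}\BU_j = \BU_j B_j^{\ell}$ for all $\ell\ge 0$, hence $p(\BA)\BU_j = \BU_j\,p(B_j)$. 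I would also record the identity $\BP_j = \BU_j\BU_j\adj\BP_j$: since every vector in $\RANGE(\BP_j)$ lies in $\RANGE(\BU_j)$ and $\BU_j\BU_j\adj$ is the orthogonal projector onto $\RANGE(\BU_j)$, it acts as the identity on $\RANGE(\BP_j)$. Combining, $p(\BA)\BP_j = p(\BA)\BU_j\BU_j\adj\BP_j = \BU_j\,p(B_j)\,\BU_j\adj\BP_j$, and submultiplicativity with $\norm{\BU_j}=\norm{\BU_j\adj}=1$ yields the displayed bound; summing over $j$ completes the proof.

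I do not anticipate a real obstacle: the argument is bookkeeping with projectors. The point that most warrants care is not to conflate the (generally oblique) spectral projector $\BP_j$ with the orthogonal projector $\BU_j\BU_j\adj$ — only the one-sided relation $\BP_j = \BU_j\BU_j\adj\BP_j$ is available, and it is precisely this relation that lets the compression $\BU_j\adj\BA\BU_j$ surface while the constant $\norm{\BP_j}$ stays attached on the outside. The one genuinely structural ingredient is the identity $p(\BA)\BU_j = \BU_j\,p(\BU_j\adj\BA\BU_j)$, which says that evaluating a polynomial and then restricting to an $\BA$-invariant subspace agrees with restricting first; it is standard, but it is the step that converts a statement about $\BA$ into one about the smaller matrices $\BU_j\adj\BA\BU_j$.
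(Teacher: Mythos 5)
Your proof is correct and follows essentially the same route as the paper: resolve the identity via $\sum_j \BP_j = \BI$, apply the triangle inequality, and compress each term through $\BU_j$ using the invariance of $\RANGE(\BP_j)$. Your intertwining identity $p(\BA)\BU_j = \BU_j\,p(\BU_j\adj\BA\BU_j)$ is just a cleaner packaging of the paper's step $\norm{\BPi_j p(\BA)\BPi_j} = \norm{p(\BPi_j\BA\BPi_j)}$ with $\BPi_j = \BU_j\BU_j\adj$, and it sidesteps the constant-term bookkeeping there.
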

\begin{proof}
Note that $\BPi_j := \BU_j\BU_j\adj$
is the orthogonal projector onto $\RANGE(\BP_j)$, an
invariant subspace of $\BA$.
(If $\BA$ is normal, then $\BPi_j = \BP_j$; for nonnormal $\BA$, 
we generally have $\BPi_j \ne \BP_j$ though both $\BPi_j$ and $\BP_j$ 
are projectors onto the same subspace.)
We apply several important identities for spectral projectors:
$\sum_{j=1}^m \BP_j = \BI$, 
$\BA\BP_j = \BA\BPi_j\BP_j$,
and $\BA \BPi_j = \BPi_j \BA \BPi_j$
(see, e.g., \cite[\S I.5.3]{kato},\cite[\S~3.1]{Saa11}).
Substituting the first identity into $\norm{p(\BA)}$ yields
\begin{eqnarray} 
 \norm{p(\BA)} = \biggl\|\, p(\BA) \sum_{j=1}^m \BP_j\, \biggr\|_2 
               &\le& \sum_{j=1}^m \norm{p(\BA)\BP_j} 
                 = \sum_{j=1}^m \norm{\BPi_j\,p(\BA)\,\BPi_j\BP_j}  \nonumber \\
               &\le& \sum_{j=1}^m \norm{\BPi_j\,p(\BA)\,\BPi_j}\,\norm{\BP_j}.
               \label{polybound}
\end{eqnarray}
Notice that for each $j$, 
$\norm{\BPi_j p(\BA) \BPi_j} = \norm{p(\BPi_j\BA\BPi_j)}
 = \norm{\BU_j\,p(\BU_j\adj \BA\BU_j)\,\BU_j\adj}
             \le \norm{p(\BU_j\adj \BA\BU_j)}$. 
Using this bound in~(\ref{polybound}) completes the proof.\qquad
\end{proof}

\medskip
Joubert~\cite[Thm.~3.2(4)]{Jou94a} presents a similar result that is
presented using the language of the Jordan canonical form 
rather than spectral projectors. 

Theorem~\ref{gensp} provides a natural tool for transitioning
between global statements like~(EV) and localized statements like~(\evp).  
In the former case, Theorem~\ref{gensp} is vacuous since the spectrum
is partitioned into a single set; in the latter case,
Theorem~\ref{gensp} reduces to Theorem~\ref{ewbound}:
each set $\Lambda_j$ is a single eigenvalue,
which implies $\norm{\BP_j} = \COND(\lambda_j)$.
In summary, \emph{one can eliminate the nonnormal coupling between sets of eigenvalues
at the cost of scaling by the norm of the associated spectral projectors.}

Theorem~\ref{gensp} can be combined with the analysis 
leading to the bound (FOV) to provide a field of values
analogue to (\evp).

\medskip

\begin{corollary} \label{cor:fovp}
   Partition the eigenvalues $\SPEC(\BA)$ into disjoint sets,
$\{\Lambda_j\}_{j=1}^m$, with the columns of
each $\BU_j$ giving an orthonormal basis for the invariant 
subspace of $\BA$ associated with $\Lambda_j$.  Then
$$ \frac{\norm{\Br_k}}{\norm{\Br_0}} 
        \le (1+\sqrt{2}) \mingmres 
           \sum_{j=1}^m 
\bigg(\norm{\BP_j} \max_{z\in \FOV(\BU_j\adj\BA\BU_j)} |p(z)|\bigg).
\leqno{({\rm FOV}')}$$
\end{corollary}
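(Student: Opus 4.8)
The plan is to chain together three ingredients that are already available: the worst-case reduction~(\ref{ideal}), the spectral-projector splitting of Theorem~\ref{gensp}, and the Crouzeix--Palencia field-of-values bound~(\ref{eq:cro}) with $\cfov \le 1+\sqrt{2}$. Fix an arbitrary $p\in\POLY_k$ with $p(0)=1$. First I would invoke~(\ref{ideal}) to write $\norm{\Br_k}/\norm{\Br_0}\le\norm{p(\BA)}$. Since $\{\Lambda_j\}_{j=1}^m$ is a partition of $\SPEC(\BA)$ and the columns of each $\BU_j$ form an orthonormal basis for the invariant subspace of $\BA$ associated with $\Lambda_j$, the hypotheses of Theorem~\ref{gensp} are met verbatim, and it gives
\[ \norm{p(\BA)} \le \sum_{j=1}^m \norm{\BP_j}\,\norm{p(\BU_j\adj\BA\BU_j)},\]
where $\BP_j$ is the spectral projector onto $\RANGE(\BU_j)$.

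The remaining step is to replace each compressed quantity $\norm{p(\BU_j\adj\BA\BU_j)}$ by a scalar approximation problem in $\C$. Each compression $\BU_j\adj\BA\BU_j$ is a square matrix of order $\RANK(\BP_j)$, so the Crouzeix--Palencia inequality~(\ref{eq:cro}) applies to it with the \emph{same} constant $\cfov\le 1+\sqrt2$; this is the crucial point, since that constant is independent of the block index $j$, of the block size, and of the degree $k$. Thus $\norm{p(\BU_j\adj\BA\BU_j)}\le (1+\sqrt2)\max_{z\in\FOV(\BU_j\adj\BA\BU_j)}|p(z)|$ for every $j$ (the maximum being attained because $\FOV(\BU_j\adj\BA\BU_j)$ is compact). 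Substituting this into the bound above and pulling the common factor $1+\sqrt2$ outside the sum yields
\[ \frac{\norm{\Br_k}}{\norm{\Br_0}} \le (1+\sqrt2)\sum_{j=1}^m \Big(\norm{\BP_j}\max_{z\in\FOV(\BU_j\adj\BA\BU_j)}|p(z)|\Big).\]

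Finally, since $p\in\POLY_k$ with $p(0)=1$ was arbitrary, I would minimize the right-hand side over all such $p$; the minimum is attained exactly as in the standard GMRES polynomial problem~(\ref{opt}). This produces precisely the bound~(FOV$'$). There is no genuine obstacle beyond bookkeeping: the only point that warrants explicit comment is that the Crouzeix--Palencia constant in~(\ref{eq:cro}) is universal, so it survives being applied simultaneously to the $m$ different compressions and then factored out of the sum. One may also note, as a sanity check consistent with the discussion following Theorem~\ref{gensp}, that $m=1$ recovers~(FOV), while partitioning $\SPEC(\BA)$ into singletons gives a field-of-values refinement in the spirit of~(EV$'$).
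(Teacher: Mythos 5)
Your proposal is correct and follows exactly the route the paper intends: the paper gives no separate proof of (FOV$'$) but states that it follows by combining Theorem~\ref{gensp} with the Crouzeix--Palencia bound~(\ref{eq:cro}) applied blockwise, which is precisely your argument. Your explicit remarks that the constant $1+\sqrt{2}$ is universal (hence independent of the block index and block size) and that one minimizes over $p$ at the end are the right points to make explicit.
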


This localization procedure enables one to work around numerous
scenarios in which $0\in\FOV(\BA)$,
but some matrices remain out of reach.  
For example, if $\BA$ is a matrix with a single eigenvalue,
Corollary~\ref{cor:fovp} does not permit any splitting of the invariant subspaces
(though similar ideas could be applied if a nontrivial block diagonalization
is possible~\cite[Thm.~3.2(4)]{Jou94a}).
If additionally $0\in\FOV(\BA)$, analysis based on the field of values cannot 
predict any convergence.

Recent analysis by Crouzeix and Greenbaum~\cite{CG19} provides a different
way to handle cases where $0\in\FOV(\BA)$.\ \ 
Let $\mu(\BA) := \max_{z\in \FOV(\BA)} |z|$ denote the \emph{numerical radius}
of $\BA$.\ \ Suppose $0\in\FOV(\BA)$ and let us denote by $\CG$ 
the region formed by the intersection of $\FOV(\BA)$ with the exterior of the disk centered at the origin with radius $1/\mu(\BAs^{-1})$:
\begin{equation} \label{eq:CG}
 \CG  := \FOV(\BA) \cap \{ z\in\C: |z|\ge 1/\mu(\BAs^{-1})\}.
\end{equation}
This set effectively results from ``carving out'' from $\FOV(\BA)$ 
the disk of radius $1/\mu(\BAs^{-1})$
centered at the origin.
Crouzeix and Greenbaum~\cite[p.~1098]{CG19} show that $\Omega_{\rm CG}$ 
is a $(3+2\sqrt{3})$-spectral set, giving the GMRES bound
$$ {\|\Br_k\|_2 \over  \|\Br_0\|_2} 
    \le \big(3+2\sqrt{3}\big) \mingmres \max_{z\in\CG} |p(z)|.
   \leqno{{\rm (CG)}}$$
As noted in~\cite{CG19}, when $\CG$ surrounds the origin 
the polynomial approximation problem in~(CG) must equal~1
(due to the maximum-modulus theorem);
see the left plot in Figure~\ref{fig:CGex2} for an example.
If $\CG$ does not surround the origin
-- as can occur even in some cases where $\BA$ has just one eigenvalue,
as in the right plot in Figure~\ref{fig:CGex2} -- the bound~(CG) gives convergence at an asymptotic 
rate determined by $\CG$.\ \ 
However, since $3+2\sqrt{3}\approx 6.464$ is still a small constant,
like (FOV) the bound~(CG) will not describe iterations that initially
converge slowly, before accelerating at later iterations.
The bound~(CG) seems to be especially useful when $0\in\FOV(\BA)$
because of the requisite convexity of $\FOV(\BA)$, rather than
nonnormality.  For example, suppose 
\begin{equation} \label{eq:CG1}
 \BA = \BA_{-1} \oplus \BA_{+1} := 
     \left[\begin{array}{cccc}
         -1 & 1/4 \\ & -1 & \ddots \\ & & \ddots & 1/4 \\ & & & -1
     \end{array}\right] 
      \oplus
     \left[\begin{array}{cccc}
         +1 & 1/2 \\ & +1 & \ddots \\ & & \ddots & 1/2 \\ & & & +1
     \end{array}\right],
\end{equation}
where $\BA_{-1}$ and $\BA_{+1}$ are bidiagonal matrices of dimension $n/2=32$.
Though the matrices in the direct sum are both far from normal,
$0 \not \in \FOV(\BA_{\pm1})$. However, since $\FOV(\BA)$ is
convex, $0\in \FOV(\BA)$; see Figure~\ref{fig:CGex1}.
The set $\CG$ remedies this ``incidental'' inclusion of the origin
in $\FOV(\BA)$, and (CG) gives a convergent bound.
(Figure~\ref{fig:CGex1} shows several interesting features:
the left lobe of $\CG$ is larger than $\FOV(\BA_{-1})$,
and moving the eigenvalue $-1$ further to the left would 
enlarge this component; the right lobe of $\CG$ omits some 
points in $\FOV(\BA_{+1})$.)
In contrast, for this example the bound (FOV$'$) would give
\[ {\|\Br_k\|_2 \over  \|\Br_0\|_2} 
    \le \big(1+\sqrt{2}\big) 
         \mingmres
         \bigg( \Big(\max_{z\in \FOV(\BA_{-1})} |p(z)|\Big)
               + \Big(\max_{z\in \FOV(\BA_{+1})} |p(z)|\Big) \bigg),
\]
since in this case the spectral projectors for the two distinct eigenvalues are orthogonal.


\begin{figure}[t!]
\begin{center}
\includegraphics[width=2.45in]{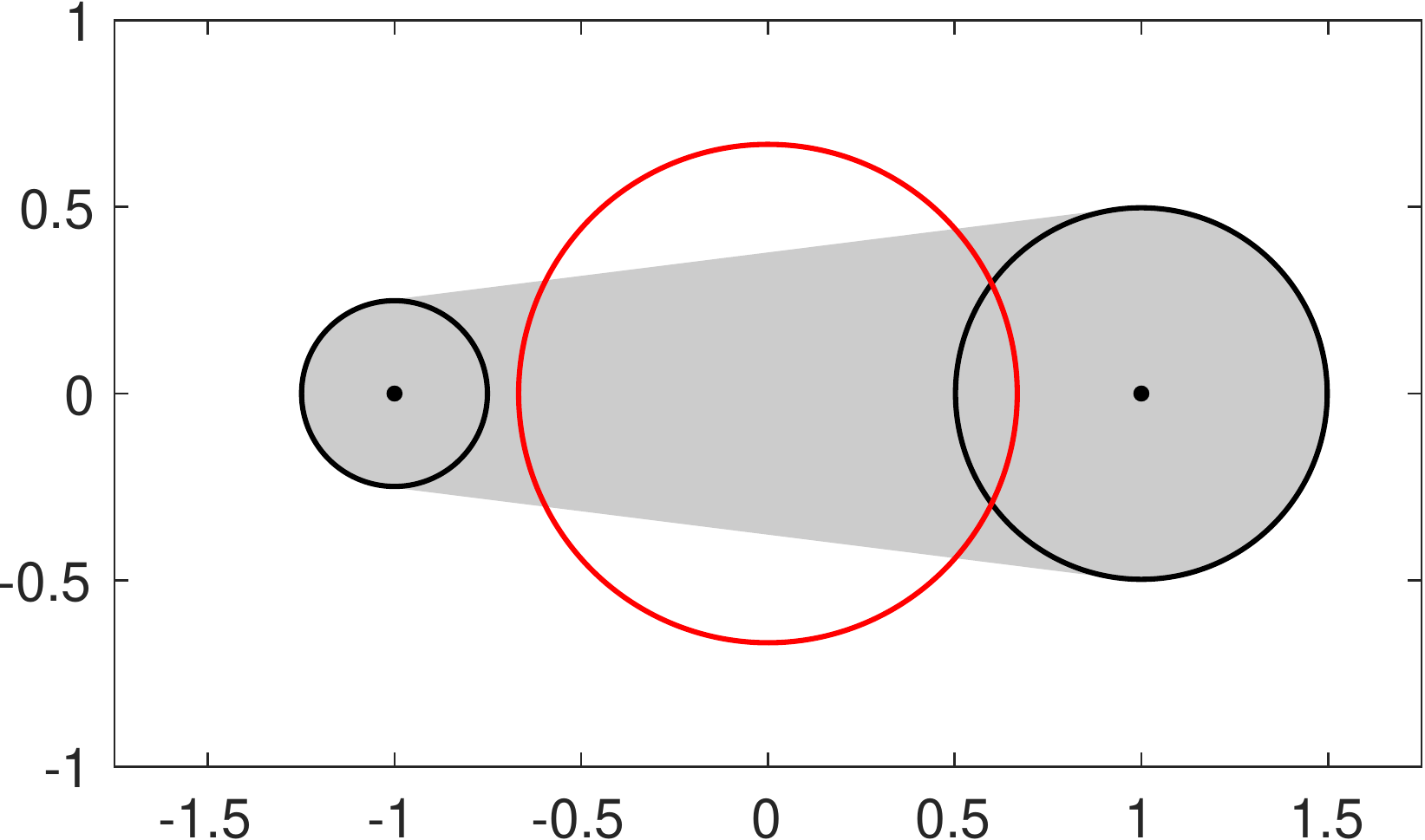}\quad
\includegraphics[width=2.45in]{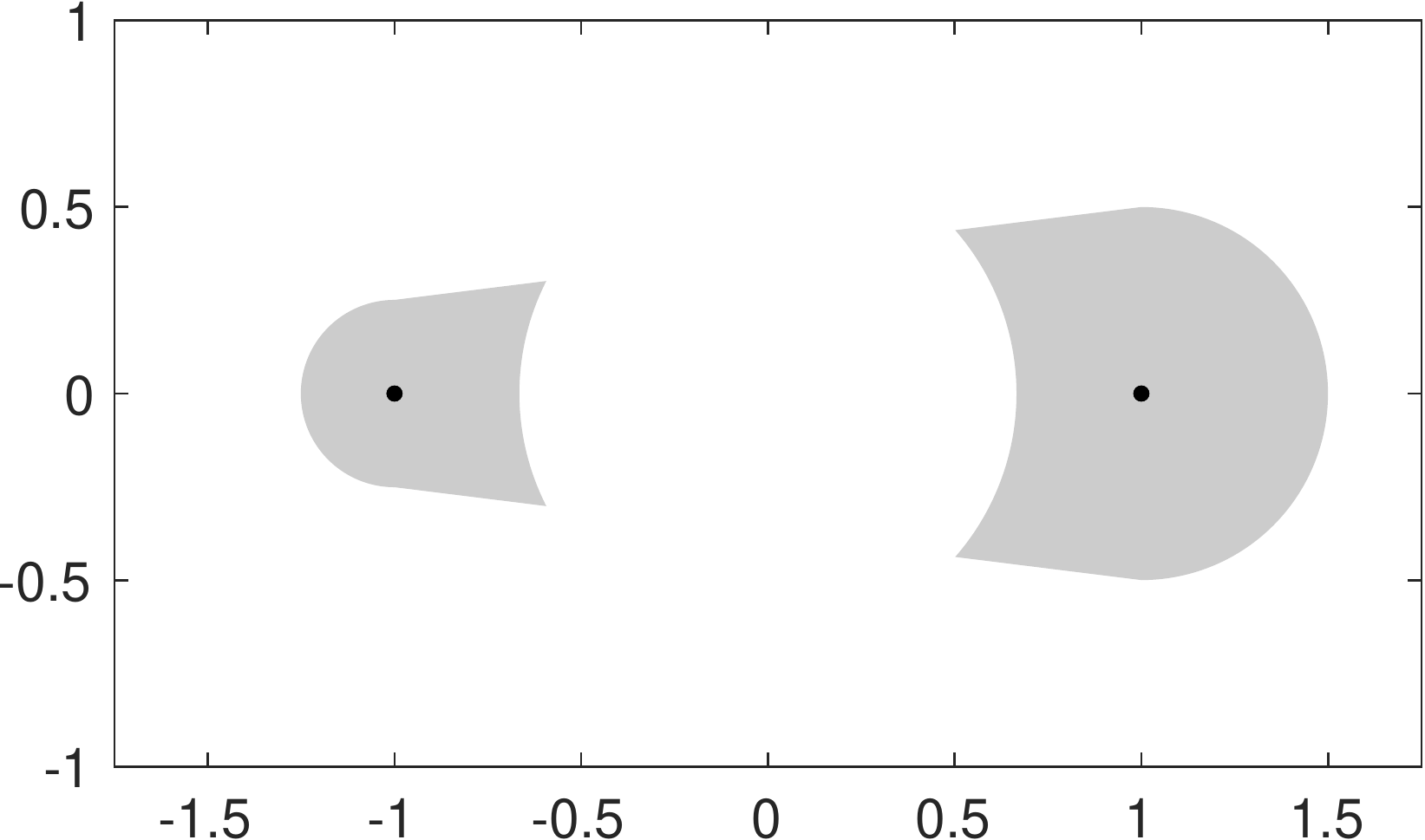}
\end{center}

\caption{\label{fig:CGex1}
For the matrix $\BA = \BA_{-1}\oplus \BA_{+1}$ in~(\ref{eq:CG1}):
on the left, $\SPEC(\BA) = \{\pm 1\}$ (dots), 
$\FOV(\BA)$ (gray region), 
the boundaries of $\FOV(\BA_{-1})$ and $\FOV(\BA_{+1})$ (black lines),
and the circle of radius $1/\mu(\BA^{-1})$ (red line);
on the right, the Crouzeix--Greenbaum region $\CG$.
Note that $0\in\FOV(\BA)$ but $0\not\in\CG$.
}
\end{figure}

\subsection{Pseudospectra}

Pseudospectra provide another generalization of $\SPEC(\BA)$ upon which 
to base GMRES convergence bounds.
The $\eps$-pseudospectrum~\cite{lnt92,TE05} $\PSA(\BA)$ has as its boundary the
$\eps$-level set of the norm of the resolvent:
$$\PSA(\BA) := \{z\in\C:  \norm{(z\BI-\BA)^{-1}} > \eps^{-1}\}. $$
The $\eps$-pseudospectrum can equivalently be defined in terms of eigenvalues
of perturbations of $\BA$:
$\PSA(\BA) = \{z\in\C:  z\in\SPEC(\BA+\BE), \norm{\BE} < \eps\}.$
Like the field of values, $\PSA(\BA)$ is robust to perturbations,
in the sense that $\PSA(\BA+\BE) \subseteq \SPEC_{\eps+\|\BE\|}(\BA)$.
Indeed, the pseudospectra can be regarded as a bridge 
between the eigenvalues and the field of values~\cite{ht93}.
Pseudospectra are more expensive to compute than $\SPEC(\BA)$ and $\FOV(\BA)$,
but robust algorithms exist for their computation and approximation in the 
large-scale case; see~\cite{lnt99,Wri02b,WT01b}.  For many of the examples 
below, we use Trefethen's code in~\cite{lnt99} or 
Wright's EigTool software~\cite{Wri02a}.

In an early application of pseudospectral theory, Trefethen~\cite{lnt90}
developed GMRES bounds by working from the Dunford--Taylor 
integral~\cite[\S I.5.6]{kato} 
\begin{equation}\label{dunford}
 p(\BA) = \frac{1}{2\pi\iop}\int_\Gamma p(z) (z\BI-\BA)^{-1}\, \dop z
\end{equation}
for $p\in\POLY_k$,
where $\Gamma$ is any union of Jordan curves containing $\SPEC(\BA)$
in its collective interior.  For a fixed $\eps>0$, we can take the
contour $\Gamma_\eps$ to be the boundary of $\PSA(\BA)$.
(If this is not the union of Jordan curves, take $\Gamma_\eps$
to be slightly exterior.) 
Coarsely approximating the resolvent norm about $\Gamma_\eps$ yields
$$ \norm{p(\BA)} \le  \frac{1}{2\pi} 
          \int_{\Gamma_\eps} |p(z)|\, \norm{(z\BI-\BA)^{-1}} \, \dop|z|
          \le \frac{{\cal L}(\Gamma_\eps)}{2\pi\eps} 
          \max_{z\in\PSA(\BA)} |p(z)|,$$
where ${\cal L}(\Gamma_\eps)$ is the contour length of $\Gamma_\eps$.
When applied to the GMRES problem, this inequality gives 
$$ \frac{\norm{\Br_k}}{\norm{\Br_0}} \le
   \frac{{\cal L}(\Gamma_\eps)}{2\pi\eps} \mingmres \max_{z\in\PSA(\BA)} |p(z)|.
   \leqno{{\rm (PSA)}}$$
What value $\eps$ should one use in~(PSA) to get a concrete bound for a given problem?
Notice that (PSA) is actually a \emph{family of bounds}
when sampled over all $\eps>0$~\cite[p.~564]{dtt98}).  
This crucial aspect of~(PSA) is often overlooked.
Larger values of $\eps$ tend to give smaller leading constants
${\cal L}(\Gamma_\eps)/(2\pi\eps)$ but larger sets $\PSA(\BA)$ over which to maximize $|p(z)|$:
such bounds can potentially capture the slow initial convergence 
of GMRES commonly observed for nonnormal $\BA$.
Smaller values of $\eps$ tend to give larger values of 
${\cal L}(\Gamma_\eps)/(2\pi\eps)$ but small $\PSA(\BA)$:
this regime can describe the faster later stage of GMRES convergence, 
with the large constant corresponding to the delayed onset of the fast convergence phase.
To see an illustration of this phenomenon, look ahead to Figure~\ref{Fbounds}.

Several different strategies lead to localized versions of~(PSA).
One can use Theorem~\ref{gensp} to decompose $\BA$ into 
invariant subspaces (at the cost of multiplication by the norms
of the spectral projectors), or one can simply replace 
$\PSA(\BA)$ with a conglomeration of disjoint components of 
pseudospectra using several different values of $\eps$.  

\medskip
\begin{corollary} \label{cor:psap}
   Partition the eigenvalues $\SPEC(\BA)$ into disjoint sets,
$\{\Lambda_j\}_{j=1}^m$, with the columns of
each $\BU_j$ giving an orthonormal basis for the invariant 
subspace of $\BA$ associated with $\Lambda_j$.  
Then for any $\eps>0$,
$$ \frac{\norm{\Br_k}}{\norm{\Br_0}} 
        \ \le \ \mingmres\ \sum_{j=1}^m\ 
        \bigg( \norm{\BP_j} {{\cal L}(\Gamma_\eps(\BU_j^*\BA\BU_j)) \over 2\pi\eps} 
                \max_{z\in \PSA(\BU_j\adj\BA\BU_j)} |p(z)|\bigg),
\leqno{({\rm PSA}')}$$
where $\Gamma_\eps(\BU_j^*\BA\BU_j)$ is a Jordan curve enclosing $\PSA(\BU_j^*\BA\BU_j)$.
\end{corollary}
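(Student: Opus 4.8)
The plan is to prove~(PSA$'$) by stacking three ingredients that the excerpt has already assembled: the worst-case reduction~(\ref{ideal}), the spectral-projector decomposition of Theorem~\ref{gensp}, and the Dunford--Taylor resolvent estimate that produced~(PSA), now applied to each diagonal block $\BU_j\adj\BA\BU_j$ rather than to $\BA$ itself. No genuinely new estimate is required; the corollary is a routine combination.

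I would proceed as follows. Fix a polynomial $p\in\POLY_k$ with $p(0)=1$ and an $\eps>0$. By~(\ref{ideal}), $\norm{\Br_k}/\norm{\Br_0}\le\norm{p(\BA)}$, and Theorem~\ref{gensp} bounds $\norm{p(\BA)}$ by $\sum_{j=1}^m\norm{\BP_j}\,\norm{p(\BU_j\adj\BA\BU_j)}$. It then remains to control each compression term $\norm{p(\BU_j\adj\BA\BU_j)}$, and here one repeats the argument that led to~(PSA) but with the matrix $\BU_j\adj\BA\BU_j$ in place of $\BA$: represent $p(\BU_j\adj\BA\BU_j)$ by the integral~(\ref{dunford}) over a contour $\Gamma_\eps(\BU_j\adj\BA\BU_j)$ enclosing $\PSA(\BU_j\adj\BA\BU_j)$, bound the resolvent norm on that contour by $\eps^{-1}$, and bound $|p|$ on the contour by its maximum over the pseudospectrum, obtaining
\[ \norm{p(\BU_j\adj\BA\BU_j)} \le \frac{{\cal L}(\Gamma_\eps(\BU_j\adj\BA\BU_j))}{2\pi\eps}\,\max_{z\in\PSA(\BU_j\adj\BA\BU_j)}|p(z)|. \]
Substituting this into the sum from Theorem~\ref{gensp} gives the claimed inequality for the fixed $p$; since it holds for every admissible $p$, it holds in particular for the minimizing one, which is precisely~(PSA$'$).

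The one place needing a moment's thought is the same caveat attached to~(PSA): one must know that $\Gamma_\eps(\BU_j\adj\BA\BU_j)$ can be chosen as a finite union of rectifiable Jordan curves enclosing the spectrum of the block, so that~(\ref{dunford}) applies. Since $\RANGE(\BU_j)$ is the invariant subspace of $\BA$ associated with $\Lambda_j$, one has $\BA\BU_j=\BU_j(\BU_j\adj\BA\BU_j)$ and hence $\SPEC(\BU_j\adj\BA\BU_j)=\Lambda_j$; for any $\eps>0$ the open set $\PSA(\BU_j\adj\BA\BU_j)$ thus contains $\Lambda_j$ in its interior, and one takes $\Gamma_\eps$ to be its boundary, or a slightly exterior curve if that boundary fails to be a union of Jordan curves. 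I expect no other obstacle. A minor remark worth including: the single parameter $\eps$ may be replaced by block-dependent values $\eps_j>0$ with no change to the argument, but we state the one-parameter version to keep the ``family of bounds'' reading of~(PSA$'$) transparent.
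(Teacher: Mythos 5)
Your argument is correct and is exactly the route the paper intends (it states the corollary without a separate proof, having just remarked that one obtains localized versions of (PSA) by combining Theorem~\ref{gensp} with the resolvent-integral bound applied blockwise): fix an admissible $p$, apply (\ref{opt})--(\ref{ideal}), invoke Theorem~\ref{gensp}, bound each $\norm{p(\BU_j\adj\BA\BU_j)}$ via the Dunford--Taylor integral over $\Gamma_\eps(\BU_j\adj\BA\BU_j)$, and then minimize over $p$. Your side remarks --- that $\SPEC(\BU_j\adj\BA\BU_j)=\Lambda_j$ because $\RANGE(\BU_j)$ is invariant, and that block-dependent $\eps_j$ would work equally well --- are accurate and consistent with the paper's treatment (the latter point is essentially Corollary~\ref{cor:psapp}).
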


\medskip
\begin{corollary} \label{cor:psapp}
Partition the eigenvalues $\SPEC(\BA)$ into disjoint sets $\{\Lambda_j\}_{j=1}^m$,
and let $\{\Gamma_j\}_{j=1}^m$ be a set of non-intersecting Jordan curves and $\{\eps_j\}_{j=1}^m$ 
positive constants such that for each~$j$:
\begin{itemize}
\item The interior of $\Gamma_j$ contains all eigenvalues in $\Lambda_j$, and
      all other eigenvalues of $\BA$ are exterior to $\Gamma_j$; 
\item $\|(z\BI-\BA)^{-1}\| \le 1/\eps_j$ for all $z\in\Gamma_j$.
\end{itemize}
Then
$$ \frac{\norm{\Br_k}}{\norm{\Br_0}} 
        \ \le \ \mingmres \ 
      \sum_{j=1}^m  \bigg({{\cal L}(\Gamma_j) \over 2\pi\eps_j} 
                            \max_{z\in \Gamma_j}\ |p(z)|\bigg).
\leqno{({\rm PSA}'')}$$
\end{corollary}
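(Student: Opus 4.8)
The plan is to reuse the Dunford--Taylor argument that produced~(PSA), but to split the integration contour into the $m$ separate loops $\Gamma_j$ and estimate the resolvent norm separately on each. The first step is to verify that the hypotheses make $\Gamma := \bigcup_{j=1}^m \Gamma_j$ an admissible contour for~(\ref{dunford}). Since $\{\Lambda_j\}_{j=1}^m$ partitions $\SPEC(\BA)$ and the interior of each $\Gamma_j$ contains precisely the eigenvalues in $\Lambda_j$ (and none of the others), every eigenvalue of $\BA$ lies inside exactly one $\Gamma_j$. Because the $\Gamma_j$ are non-intersecting, no loop can be nested inside another: a nested loop would place its enclosed eigenvalues in the interior of the outer loop, contradicting the exclusion hypothesis (unless the inner $\Lambda_j$ is empty, in which case that loop contributes nothing). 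Hence $\Gamma$, with the positive orientation inherited from~(\ref{dunford}), is a disjoint union of Jordan curves whose collective interior contains $\SPEC(\BA)$, each eigenvalue encircled exactly once.

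With this in hand, apply~(\ref{dunford}) and split the integral over the components of $\Gamma$:
\[ p(\BA) = \frac{1}{2\pi\iop}\int_{\Gamma} p(z)(z\BI-\BA)^{-1}\,\dop z
          = \sum_{j=1}^m \frac{1}{2\pi\iop}\int_{\Gamma_j} p(z)(z\BI-\BA)^{-1}\,\dop z. \]
Taking norms, invoking $\norm{(z\BI-\BA)^{-1}}\le 1/\eps_j$ on $\Gamma_j$ (the second hypothesis), and pulling $\max_{z\in\Gamma_j}|p(z)|$ outside each integral gives
\[ \norm{p(\BA)} \le \sum_{j=1}^m \frac{1}{2\pi}\int_{\Gamma_j} |p(z)|\,\norm{(z\BI-\BA)^{-1}}\,\dop|z|
   \le \sum_{j=1}^m \frac{{\cal L}(\Gamma_j)}{2\pi\eps_j}\,\max_{z\in\Gamma_j}|p(z)|, \]
which is exactly the estimate used for~(PSA), now carried out loop by loop with its own constant ${\cal L}(\Gamma_j)/(2\pi\eps_j)$.

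Finally, this inequality holds for every fixed $p\in\POLY_k$ with $p(0)=1$. Combining it with~(\ref{ideal}), which bounds $\norm{\Br_k}/\norm{\Br_0}$ by $\min_p \norm{p(\BA)}$ over such $p$, and then taking the minimum over all admissible $p$ on both sides yields~(PSA$''$). (Taking $m=1$ with $\Gamma_1$ any Jordan curve enclosing $\PSA(\BA)$ and $\eps_1=\eps$ recovers~(PSA), using the maximum modulus principle to pass from the boundary $\Gamma_1$ back to the region $\PSA(\BA)$.) The only point requiring care is the contour bookkeeping in the first step -- confirming that non-intersection together with the eigenvalue-exclusion condition really does make $\bigcup_j \Gamma_j$ a valid Dunford--Taylor contour with the correct orientation and multiplicity; everything after that is the resolvent estimate already established for~(PSA).
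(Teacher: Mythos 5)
Your proposal is correct and follows exactly the route the paper indicates: choosing $\Gamma = \cup_{j=1}^m \Gamma_j$ in the Dunford--Taylor integral~(\ref{dunford}), bounding the resolvent norm by $1/\eps_j$ on each component, and summing the resulting loop-by-loop estimates before minimizing over $p$. The contour bookkeeping you add (non-nesting and correct winding) is a detail the paper leaves implicit, but it is accurate and does not change the argument.
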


\noindent
This bound follows from choosing $\Gamma = \cup_{j=1}^m \Gamma_j$ 
in the integral~(\ref{dunford}).  Notice that when $\eps_1=\cdots=\eps_m$,
this bound reduces to (PSA).\ \  However, in some situations it might be advantageous
to choose, for example, a very small $\eps_j$ for a few eigenvalues near
the origin, but larger $\eps_j$ for the remaining eigenvalues farther from the origin.

\subsection{Computing the Convergence Bounds}

With each of the bounds (EV), (FOV), and (PSA) 
is associated a constant, defined as 
\begin{equation} \label{eq:constants}
 \cev := \COND(\BV),\qquad 
   \cfov := 1+\sqrt{2}, \qquad\mbox{and}\quad
   \cpsa(\eps) := \frac{{\cal L}(\Gamma_\eps)}{2\pi\eps}.
\end{equation}
The asymptotic behavior of each bound is determined 
by the associated complex approximation problem over
$\SPEC(\BA)$, $\FOV(\BA)$, or $\PSA(\BA)$.  

Let $\Omega\subset\C$ be a compact domain (without isolated points)
that tightly bounds $\FOV(\BA)$, $\PSA(\BA)$, or the clustered eigenvalues 
of $\SPEC(\BA)$.%
\footnote{If $\BA$ has finite dimension, $\SPEC(\BA)$ is a discrete point set 
with no finite asymptotic convergence rate. If the eigenvalues are clustered, 
the asymptotic convergence rate of the bounding set $\Omega$
typically describes convergence. Outlying eigenvalues 
do not affect this convergence \emph{asymptotic} rate; 
see~\cite{dtt98} for details.}
Provided $0\not\in\Omega$,
the error of the approximation problem
\[ \mingmres \max_{z\in\Omega} |p(z)|\]
decreases at an asymptotically linear
rate in the degree $k$ (see, e.g.,~\cite[Ch.~16]{hille2}):
\begin{equation} \label{eq:rho}
    \limsup_{k\to\infty} \bigg(\mingmres \max_{z\in\Omega} |p(z)|\bigg)^{1/k} = \rho.
\end{equation}
We call $\rho\in(0,1)$ the \emph{asymptotic (linear) convergence rate}
for $\Omega$.\ \ Driscoll, Toh, and Trefethen demonstrate how this constant can
be computed via conformal mapping~\cite{dtt98}.
When the set is a line segment or a disk, the rate is simple to 
compute.  For arbitrary polygons, the rate can be 
computed in MATLAB using Driscoll's Schwarz--Christoffel Toolbox
for numerical conformal mapping~\cite{driscoll96};
recent versions Chebfun~\cite{chebfun,GT19} also
include conformal mapping capabilities.
One might prefer to take $\Omega$ to be a disconnected set if
$\SPEC(\BA)$ has outliers or for $\PSA(\BA)$ with sufficiently small $\eps$.  
If each connected component of a disconnected set 
is a polygon on the real axis and is symmetric about the real 
axis, the rate can still be computed~\cite{ET99b}.  More general 
sets present greater difficulty, and it may in practice
 be necessary to bound $\SPEC(\BA)$ or $\PSA(\BA)$ with a 
single over-sized polygon.

The asymptotic characterization~(\ref{eq:rho}) does not directly provide a 
convergence bound at a specific iteration number, $k$: 
effectively, the $k$th root in~(\ref{eq:rho}) obscures 
a leading constant relating the minimization problem to $\rho^k$.
We are guaranteed that \begin{equation} \label{bestrate}
 \mingmres \max_{z\in\Omega} |p(z)| \ge \rho^k
\end{equation}
(see, e.g.,~\cite{dtt98}),  and if $\Omega$ is a disk, 
equation~(\ref{bestrate}) holds with equality.
When $\Omega$ is a segment $[a,b]$ of a line passing through
the origin, shifted and scaled Chebyshev polynomials are optimal.
In this case, the minimax error is bounded above by $2\rho^k$ and
known explicitly for each $k$ (see, e.g.,~\cite[\S6.11]{Saa03}).
If $\Omega$ is convex, Eiermann~\cite{eiermann89,eiermann97}
uses Faber polynomial analysis based on an approximation
theorem of K\"ovari and Pommerenke~\cite{kp67} to show that 
\begin{equation} \label{convex}
 \mingmres \max_{z\in\Omega} |p(z)| \le \frac{2\rho^k}{1-\rho^k}.
\end{equation}
In particular, this bound always holds when $\Omega = \FOV(\BA)$~\cite{eiermann97}.
In other circumstances, one can construct some polynomial $\phi\in\CP_k$
(e.g., by interpolating at well-chosen points on $\Omega$, or by
constructing the Faber polynomials associated with $\Omega$ from 
the conformal map that determines $\rho$) to obtain an upper bound:
\[ \mingmres \max_{z\in\Omega} |p(z)|
   \le \max_{z\in\Omega} |\phi(z)|.
\]
To unify notation,
we label the rate associated with each of the three sets
$\SPEC(\BA)$, $\FOV(\BA)$, and $\PSA(\BA)$ as
$\rev$, $\rfov$, and $\rpsa(\eps)$.  
In most situations we will base these constants on 
compact domains $\Omega$ that contain $\SPEC(\BA)$, 
$\FOV(\BA)$, and $\PSA(\BA)$.  (For a few examples
we will handle outlying eigenvalues separately.)

\section{Which Bounds are Most Useful\hspace*{0.1em}}\hspace*{-10pt}{\bf ?} \label{sec:useful}
The previous section described the standard
bounds (EV), (FOV), and (PSA), along with ``localized'' variants.
How do these bounds compare?
Are they redundant, or can each provide specific insight?
We begin by exploring the relationships
between the sets $\SPEC(\BA)$, $\FOV(\BA)$, and $\PSA(\BA)$,
and then turn to concrete examples illustrating the relative
merits of the three standard bounds.
We denote the spectral radius as
\[ \rho(\BA) := \max_{z\in\SPEC(\BA)} |z| \]
and the numerical radius as
\[ \mu(\BA) := \max_{z\in\FOV(\BA)} |z|.\]
Let $\DISK{r}:=\{z\in\C : |z|< r\}$ 
and $\CDISK{r}:=\{z\in\C : |z|\le r\}$ 
denote the open and closed disks of radius $r$.

If for a small $\eps>0$ the $\eps$-pseudospectrum contains points far 
from any eigenvalue, then the field of values must also contain points
far from $\SPEC(\BA)$, and the 
condition number $\COND(\BV)$ of the eigenvector matrix must also be large, 
as made precise by the following theorems.
The first, a version of the Bauer--Fike 
theorem~\cite{bf60},\cite[Thm.~3.2]{TE05}, 
bounds the $\eps$-pseudospectrum by the union of balls of
radius $\eps@\COND(\BV)$ centered at the eigenvalues.
The second (known already to Stone in 1932~\cite[Thm.~4.20]{Sto32}; 
see also~\cite[\S4.6]{gustrao}, \cite[Thm.~17.2]{TE05})
bounds the $\eps$-pseudospectrum in terms of the field of values.
The third relates the field of values to the eigenvector
condition number, a consequence of the basic
inequality $\NRAD(\BA)\le \norm{\BA}$.

\medskip
\begin{theorem}[Bauer--Fike] \label{bauerfike}
Let $\BA$ be diagonalizable, $\BA=\BV\BLambda\BV^{-1}$.  
Then for any $\eps>0$, $\PSA(\BA) \subseteq \SPEC(\BA) + 
\DISK{\eps\COND(\BV)}.$
\end{theorem}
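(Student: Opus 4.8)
The plan is to exploit the diagonalization to reduce the resolvent of $\BA$ to the resolvent of the diagonal matrix $\BLambda$, whose norm is immediate, and then read off the pseudospectral inclusion straight from the definition. First I would dispose of the trivial case: if $z\in\SPEC(\BA)$, then $z$ is one of the centers of the disks, so $z\in\SPEC(\BA)+\DISK{\eps\COND(\BV)}$ with room to spare. Hence assume $z\in\PSA(\BA)$ with $z\notin\SPEC(\BA)$, so that $z\BI-\BA$ is invertible.

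Substituting $\BA=\BV\BLambda\BV^{-1}$ gives $(z\BI-\BA)^{-1}=\BV(z\BI-\BLambda)^{-1}\BV^{-1}$, and submultiplicativity of the 2-norm yields
$$\eps^{-1}<\norm{(z\BI-\BA)^{-1}}\le\norm{\BV}\,\norm{(z\BI-\BLambda)^{-1}}\,\norm{\BV^{-1}}=\COND(\BV)\,\norm{(z\BI-\BLambda)^{-1}},$$
where the first inequality is the definition of $\PSA(\BA)$. Now $z\BI-\BLambda$ is diagonal with nonzero entries $z-\lambda_j$, so its inverse is diagonal with entries $(z-\lambda_j)^{-1}$ and therefore $\norm{(z\BI-\BLambda)^{-1}}=\max_j|z-\lambda_j|^{-1}=1/\min_j|z-\lambda_j|=1/\mathrm{dist}(z,\SPEC(\BA))$. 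Combining the two displays gives $\mathrm{dist}(z,\SPEC(\BA))<\eps\COND(\BV)$, which is exactly the statement that $z$ lies within an open disk of radius $\eps\COND(\BV)$ of some eigenvalue, i.e., $z\in\SPEC(\BA)+\DISK{\eps\COND(\BV)}$.

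There is essentially no obstacle in this argument; the only points requiring a bit of care are the $z\in\SPEC(\BA)$ edge case and matching the strict inequality in the paper's definition of $\PSA(\BA)$ with the open disk $\DISK{\eps\COND(\BV)}$ in the conclusion (both are strict, so this is consistent). The proof also makes transparent why the eigenvector conditioning is the right constant: $\COND(\BV)$ is precisely the factor by which the similarity transformation can inflate the resolvent norm relative to the normal (diagonal) case, and it collapses to $1$ exactly when $\BA$ is normal.
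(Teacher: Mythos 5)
Your proof is correct and is exactly the standard argument: the paper states this as a known result (citing Bauer--Fike and \cite[Thm.~3.2]{TE05}) without reproducing a proof, and the cited proof is precisely your similarity-transformation bound $\norm{(z\BI-\BA)^{-1}} \le \COND(\BV)/\mathrm{dist}(z,\SPEC(\BA))$ combined with the resolvent definition of $\PSA(\BA)$. Your handling of the $z\in\SPEC(\BA)$ edge case and the matching of strict inequalities with the open disk $\DISK{\eps\COND(\BV)}$ are both consistent with the paper's conventions.
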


\medskip
\begin{theorem} \label{psafov}
For any $\eps>0$,
$ \PSA(\BA)\subseteq \FOV(\BA) + \DISK{\eps}.$
\end{theorem}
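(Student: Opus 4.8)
The plan is to show the contrapositive: if $z$ is \emph{not} in $\FOV(\BA)+\DISK{\eps}$, then $z\notin\PSA(\BA)$, i.e.\ $\norm{(z\BI-\BA)^{-1}}\le \eps^{-1}$. The geometric hypothesis ``$z\notin \FOV(\BA)+\DISK{\eps}$'' means exactly that the distance from $z$ to the (closed, convex) set $\FOV(\BA)$ is at least $\eps$; equivalently, $\eps \le \operatorname{dist}(z,\FOV(\BA)) = \operatorname{dist}(0, \FOV(z\BI-\BA))$, where I have used the translation property $\FOV(z\BI-\BA) = z - \FOV(\BA)$. So it suffices to prove the following clean statement: for any matrix $\BB\in\Cnn$ (here $\BB = z\BI-\BA$), if $\delta := \operatorname{dist}(0,\FOV(\BB)) > 0$, then $\BB$ is invertible and $\norm{\BB^{-1}} \le 1/\delta$.

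The key step is a standard lower bound on $\norm{\BB\Bx}$ via the field of values. For any unit vector $\Bx\in\Cn$, Cauchy--Schwarz gives $\norm{\BB\Bx} = \norm{\BB\Bx}\,\norm{\Bx} \ge |\Bx\adj\BB\Bx|$, and $\Bx\adj\BB\Bx \in \FOV(\BB)$, so $|\Bx\adj\BB\Bx| \ge \operatorname{dist}(0,\FOV(\BB)) = \delta$. Hence $\norm{\BB\Bx}\ge \delta$ for every unit vector $\Bx$, which (since $\delta>0$) shows $\BB$ is injective, hence invertible, and that $\norm{\BB^{-1}} = \left(\min_{\norm{\Bx}=1}\norm{\BB\Bx}\right)^{-1} \le 1/\delta$. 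Applying this with $\BB = z\BI-\BA$ and $\delta \ge \eps$ yields $\norm{(z\BI-\BA)^{-1}}\le 1/\eps$, so $z\notin\PSA(\BA)$, completing the contrapositive.

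There is essentially no hard part here — the proof is three lines of Cauchy--Schwarz plus the translation-invariance of the field of values — but the one point deserving a word of care is the handling of the boundary/closedness: $\FOV(\BA)$ is closed, so $\FOV(\BA)+\DISK{\eps}$ is the set of points at distance strictly less than $\eps$ from $\FOV(\BA)$, and its complement is the set of points at distance $\ge\eps$; meanwhile $\PSA(\BA)$ is defined here with a strict inequality $\norm{(z\BI-\BA)^{-1}} > \eps^{-1}$. These conventions line up correctly: $z\notin\FOV(\BA)+\DISK{\eps}$ gives $\delta\ge\eps$, hence $\norm{(z\BI-\BA)^{-1}}\le\eps^{-1}$, which is precisely the negation of membership in $\PSA(\BA)$. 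One should also note the edge case $\SPEC(\BA)\ne\emptyset$ and $\eps$ possibly exceeding $\mu(\BA)$ is harmless, since the argument never needs $0$ to be outside anything except relative to the shifted matrix $z\BI-\BA$.
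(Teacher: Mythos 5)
Your proof is correct, and the paper itself states this theorem without proof, citing Stone, Gustafson--Rao, and Trefethen--Embree; the argument you give (lower-bounding $\sigma_{\min}(z\BI-\BA)$ by $\operatorname{dist}(z,\FOV(\BA))$ via Cauchy--Schwarz) is precisely the standard one found in those references. Your attention to the open-disk versus strict-resolvent-inequality conventions is also handled correctly, so there is nothing to add.
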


\medskip
 \begin{theorem} \label{evfov}
 Let $\BA$ be diagonalizable, $\BA=\BV\BLambda\BV^{-1}$.  
 Then $\NRAD(\BA) \le \COND(\BV)\,\SRAD(\BA)$.
 \end{theorem}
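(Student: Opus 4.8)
The plan is to reduce the claim to the elementary inequality $\NRAD(\BM)\le\norm{\BM}$ (the ``basic inequality'' noted in the text immediately before the theorem), applied not to $\BA$ directly but after peeling off the eigenvector matrix. So the proof is really just a short chain of standard norm inequalities, and I do not anticipate any genuine obstacle.

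First I would recall why $\NRAD(\BM)\le\norm{\BM}$: for any unit vector $\Bx\in\Cn$, Cauchy--Schwarz gives $|\Bx\adj\BM\Bx|\le\norm{\Bx}\,\norm{\BM\Bx}\le\norm{\BM}$, and taking the supremum over unit $\Bx$ in the definition~(\ref{eq:fovdef}) of the field of values yields $\NRAD(\BM)\le\norm{\BM}$. Applying this with $\BM=\BA=\BV\BLambda\BV^{-1}$ gives $\NRAD(\BA)\le\norm{\BV\BLambda\BV^{-1}}$.

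Next I would use submultiplicativity of the operator $2$-norm, $\norm{\BV\BLambda\BV^{-1}}\le\norm{\BV}\,\norm{\BLambda}\,\norm{\BV^{-1}}=\COND(\BV)\,\norm{\BLambda}$, and then observe that since $\BLambda$ is diagonal with the eigenvalues of $\BA$ on its diagonal, $\norm{\BLambda}=\max_j|\lambda_j|=\SRAD(\BA)$. Chaining these three facts produces $\NRAD(\BA)\le\COND(\BV)\,\SRAD(\BA)$, as desired. (The only point worth flagging is that the argument uses nothing about $\BA$ beyond diagonalizability, and that when $\BA$ is normal one may take $\BV$ unitary, so $\COND(\BV)=1$ and the inequality recovers the familiar $\NRAD(\BA)=\SRAD(\BA)$ for normal matrices.)
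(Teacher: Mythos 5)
Your argument is correct and is exactly the route the paper intends: the text introduces the theorem as ``a consequence of the basic inequality $\NRAD(\BA)\le\norm{\BA}$,'' and your chain $\NRAD(\BA)\le\norm{\BV\BLambda\BV^{-1}}\le\COND(\BV)\norm{\BLambda}=\COND(\BV)\SRAD(\BA)$ is precisely that consequence. Nothing to add.
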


Theorems~\ref{bauerfike} and~\ref{evfov} hold with equality 
when $\BA$ is normal.
Theorem~\ref{psafov} is sharp if $\BA$ is a multiple of 
the identity, or if $\BA$ is a Jordan block in the
limit $n=\infty$~(see Example~D).\ \   
For nonnormal matrices, all three bounds can be far from equality.

\medskip
Theorems~\ref{bauerfike} and~\ref{psafov} shed light on 
the constant $\cpsa(\eps) = {\cal L}(\Gamma_\eps)/(2\pi\eps)$
defined in~(\ref{eq:constants}).
More precisely, they permit insight about $\cpsa(\eps)$ 
when $\Gamma_\eps$ is taken to be a convenient
curve that encloses $\PSA(\BA)$ in its interior,
rather than the generally more complicated boundary of $\PSA(\BA)$.
(The bound (PSA) holds when $\PSA(\BA)$ 
is replaced by such a larger set: $\cpsa(\eps)$ is now
defined using $\Gamma_\eps$, and the polynomial approximation problem
associated with $\rpsa(\eps)$ in~(PSA) is now posed over the 
interior of $\Gamma_\eps$, rather than $\PSA(\BA)$ itself.)

By Theorem~\ref{bauerfike}, $\PSA(\BA)$ is 
bounded by the union of $n$ disks each with radius 
$\eps@\COND(\BV)$.  
Taking $\Gamma_\eps$ to be the boundary
of this union, ${\cal L}(\Gamma_\eps)$
can be no larger than $2\pi@n@\eps@\COND(\BV)$, so
$\cpsa(\eps) \le n@\COND(\BV)$ for this $\Gamma_\eps$.
But since $\SPEC(\BA)\subset\PSA(\BA)\subseteq {\rm interior}(\Gamma_\eps)$, 
we must have $\rev < \rpsa(\eps)$
for all $\eps>0$, and thus (PSA) is generally only useful 
for those $\eps$ for which $\cpsa(\eps) \ll n@\COND(\BV)$.

By Theorem~\ref{psafov}, if one takes $\Gamma_\eps$ to be the boundary of
the disk centered at the origin having radius $\NRAD(\BA)+\eps$, then
$\cpsa(\eps) = 1 + \NRAD(\BA)/\eps$.
Notice then that for such $\Gamma_\eps$ we have $\cpsa(\eps)\to1$ as $\eps\to\infty$.
(Of course, $\rpsa(\eps)=1$ for such sets.)

When the containment $\PSA(\BA)\subset\FOV(\BA)+\Delta_\eps$
is nearly equality even for small values of $\eps$,
the bound (FOV) can be slightly sharper than (PSA),
as is seen in Figures~\ref{fig:Abounds} and~\ref{Dbounds}.
In cases where the bound in Theorem~\ref{psafov} is far from equality
(i.e., $\PSA(\BA)$ does not contain points near the boundary of 
$\FOV(\BA)+\Delta_\eps$ for moderate values of $\eps$),
one often finds that (FOV) suggests slow, consistent
convergence, while (PSA) reflects convergence that eventually
accelerates to a more rapid rate, as in Figures~\ref{Ebounds},
\ref{Fbounds}, and~\ref{supg:bounds}.

\subsection{The Examples} \label{sec:ex}

The bounds (EV), (FOV), and (PSA) are descriptive in different
situations.  We demonstrate with six examples where the bounds 
succeed together, fail together, and, in turn,
fail and succeed alone.  These examples are summarized in
Table~\ref{table:all}.  We only discuss the
standard bounds, though in some instances a localized
version would fix the flaw that causes the corresponding
standard bound to fail.
It is difficult to show the failure of (PSA) with the 
simultaneous success of (EV) or (FOV). 
Example~C, showing success of (EV) with pessimistic 
(PSA) bounds, is the least satisfying of our six examples.
We also discuss why (FOV) cannot significantly outperform (PSA).

\begin{table}[b!]
\caption{Predicted iterations for the six main examples in Section~\ref{sec:ex}.}
\label{table:all}

\begin{center}
\begin{tabular}{l l | c c c | c}
\multicolumn{2}{c|}{\emph{example}}  &  (EV) & (FOV) & (PSA) & \emph{true
iterations}\\[
0.2em] \hline
&&&& \\[-0.7em]
A  & all descriptive & 1 & 1 & 1 & 1 \\[0.4em]
B  & none descriptive & $\infty$ & $\infty$ & $\infty$ &
                                 \emph{see note 1} \\[0.4em]
C  & (EV) wins         & \multicolumn{3}{c|}{\emph{see note 2}} &  \\[0.4em]
D  & (EV) loses        & $\infty$ & $1$      & $1$      & 1 \\[0.4em]
   & (FOV) wins  & \multicolumn{3}{c|}{\emph{see note 3}}  & \\[0.4em]
E  & (FOV) loses       & $2$      & $\infty$ & $2$      & 2 \\[0.4em]
F  & (PSA) wins        & $\infty$ & $\infty$ & $2$      & 2 \\[0.4em]
   & (PSA) loses & \multicolumn{3}{c|}{\emph{see note 3}} & \\
\multicolumn{6}{l}{\rule[0.25em]{5em}{0.5pt} }\\
\multicolumn{6}{l}{1. $\lceil 2+\log(\mbox{\sc tol})/\log(\rho)\rceil$ iterations, for
                      parameters $\mbox{\sc tol}<1$ and $\rho\in(0,1)$.}\\
\multicolumn{6}{l}{2. ``(EV) wins'' is more involved; 
                       details are given in the text.} \\
\multicolumn{6}{l}{3. (FOV) cannot significantly beat (PSA);
                       see the ``No Example'' sections.} 
\end{tabular}
\end{center}
\end{table}

In the illustrations that follow, the Ideal GMRES value
\[ \mingmres \norm{p(\BA)}\]
is drawn as a solid black line with dots superimposed at each iteration $k$.  
The bound (EV) is drawn as a solid red line, (FOV) with a broken blue line, 
and (PSA) with gray lines for various values of $\eps$.

\exhead{Example A: All descriptive}  

All bounds accurately describe GMRES convergence for a scalar
multiple of the identity,
\[ \BA = \alpha @\BI, \rlap{$\quad \alpha\in\C\setminus\{0\}.$}\]
Since $\BA$ is normal with a single eigenvalue, 
$\SPEC(\BA) = \FOV(\BA) = \{\alpha\}$ and 
$\PSA(\BA) = \{\alpha\}+\DISK{\eps}$.  The approximation
problems in (EV) and (FOV) are on singleton sets, and thus
these bounds ensure convergence in one iteration;
the associated constants are $\cev=1$ and $\cfov = 1+\sqrt{2}$.  
The pseudospectral bound also gives convergence in a single 
iteration, but the explanation is a bit more elaborate.
The constant term in (PSA) is $\cpsa=1$ for all $\eps$;
with its approximation problem on the disk $\alpha+\DISK{\eps}$, 
(PSA) gives
\[ \mingmres \norm{p(\BA)} \le (\eps/\alpha)^k, 
   \rlap{\qquad for all $\eps\in(0,\alpha)$,}\]
implying, as $\eps\to 0$, 
convergence to any given tolerance in a single iteration.

\begin{figure}[b!]
\begin{center}
\includegraphics[scale=0.57]{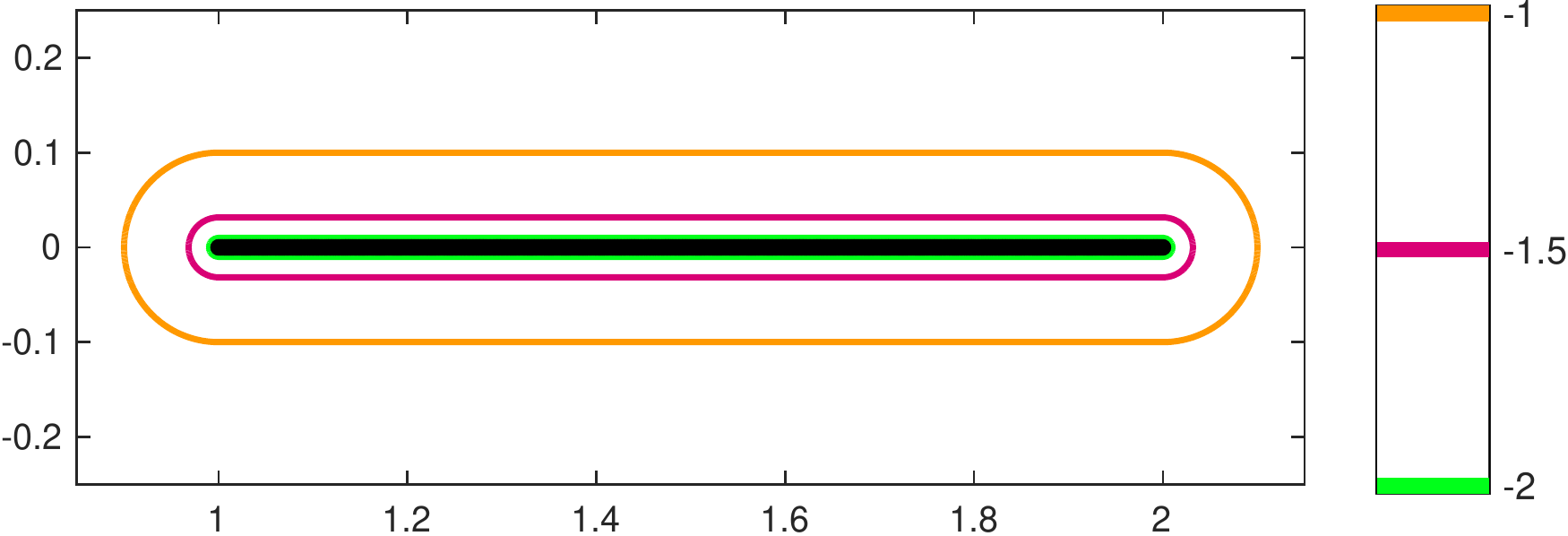}

\vspace*{.5em}
\includegraphics[scale=0.57]{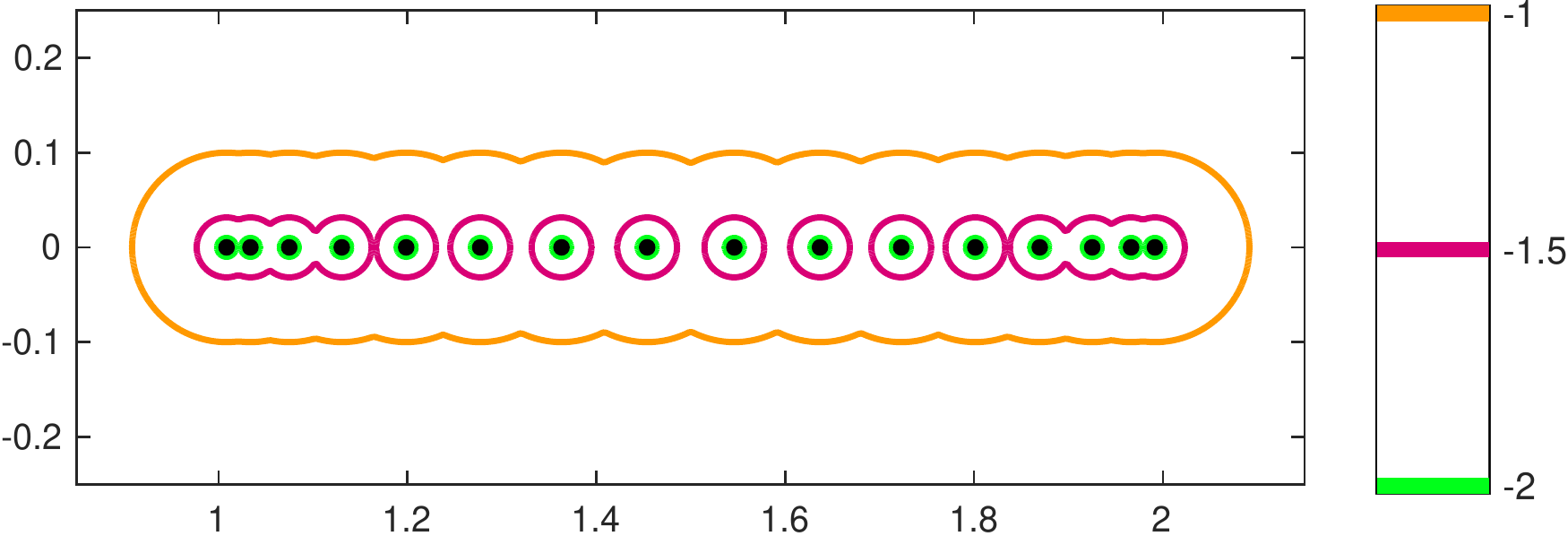}

\begin{picture}(0,0)
\put(-120,200){\small $N=\infty$}
\put(-120,95){\small $N=16$}
\end{picture}
\end{center}

\vspace*{-15pt}
\caption{\label{fig:psaA}
Spectra $($black dots in {$[1,2]$}$)$ and $\eps$-pseudospectra $(${$\eps=10^{-1}$, 
$10^{-1.5}$, $10^{-2}$}$)$ for the operator~(\ref{eq:Atoep}) $($top$)$
with $[a,b]=[1,2]$, and its finite section of dimension $N=16$ $($bottom$)$.
(In this and all following plots of $\PSA(\BA)$, the color bar denotes 
$\log_{10}(\eps)$, and the corresponding colored lines show the boundary
of $\PSA(\BA)$.)
}
\end{figure}

\begin{figure}[t!]
\begin{center}
\includegraphics[scale=0.57]{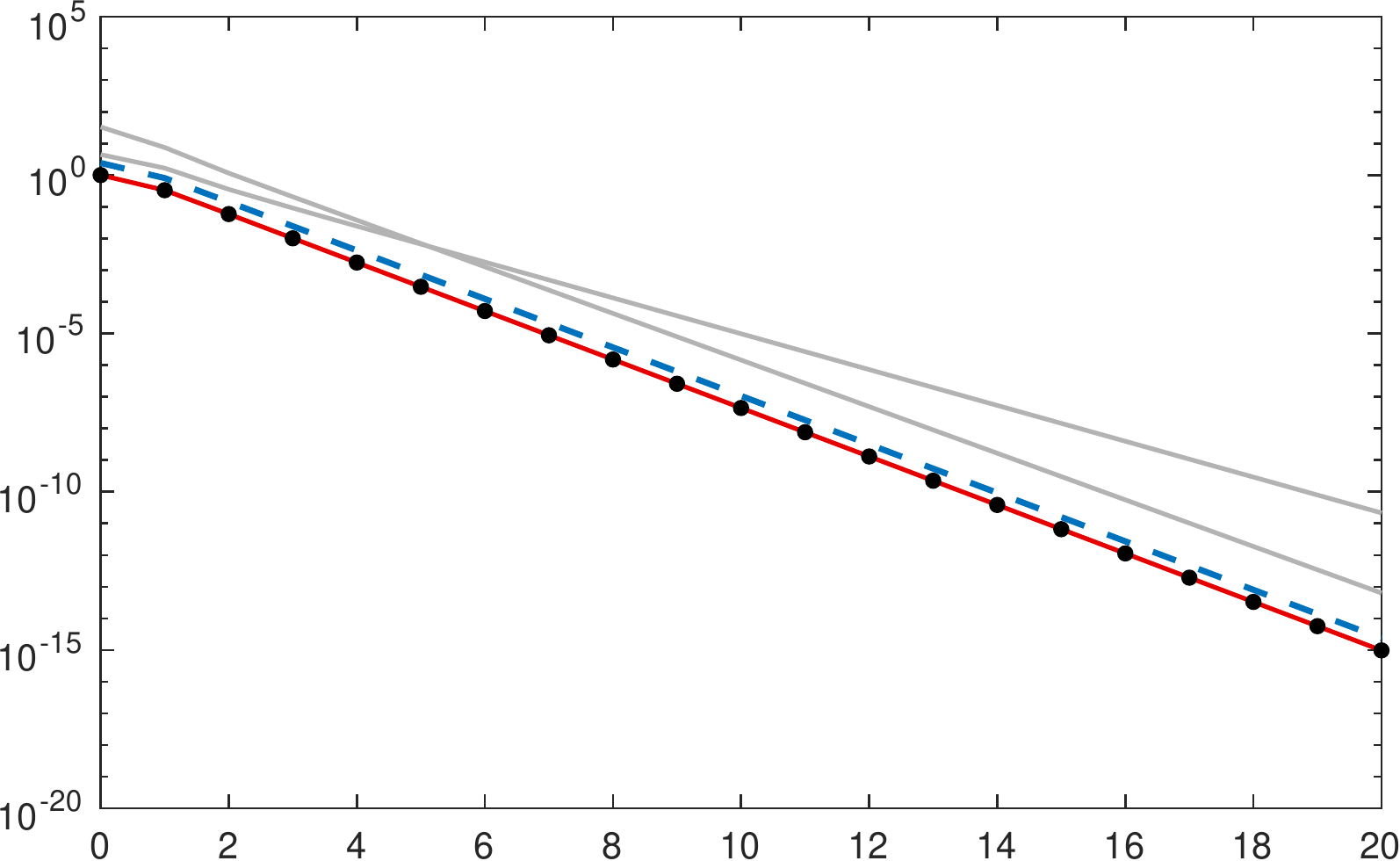}

\begin{picture}(0,0)
 \put(-184,125){\small $\displaystyle{\mingmres\! \norm{p(\BA)}}$}
 \put(-14,0){\small iteration, $k$}
 \put(130,75){\small $\eps=10^{-1}$}
 \put(130,61){\small $\eps=10^{-2}$}
 \put(130,52){\small (FOV)}
 \put(130,42){\small (EV), exact}
\end{picture}
\end{center}

\vspace*{-5pt}
\caption{\label{fig:Abounds}
Convergence bounds for the normal Toeplitz operator~$(\ref{eq:Atoep})$
with $\SPEC(\BA)=[1,2]$.}
\end{figure}

All three bounds also perform well if the single eigenvalue is
expanded to a positive real interval $[a,b]\not\ni 0$, provided $\BA$
remains normal.  To get $\SPEC(\BA)=[a,b]$ requires an 
operator on an infinite dimensional space.  For example, 
set $\alpha:=(a+b)/2$ and $\beta := (b-a)/4$.  Then 
the tridiagonal Toeplitz operator
\begin{equation} \label{eq:Atoep}
\BA = \left[\begin{array}{cccccc}
              \ddots & \ddots  \cr \ddots & \alpha & \beta \cr 
              & \beta & \alpha & \beta & \phantom{\ddots}\cr 
              & & \beta & \alpha & \ddots  \cr
              & &  & \ddots & \ddots 
       \end{array}\right]
\end{equation}
on $\ell^2(\Z)$ is self-adjoint (and hence normal) with the 
spectrum and field of values 
$\SPEC(\BA) = \FOV(\BA) = [a,b]$; see, e.g., \cite{BG05}.
(Finite sections of this operator will have eigenvalues 
distributed across $(a,b)$.)
For the infinite dimensional operator, (EV) and (FOV) give the 
same convergence rate,  
\[ \rev = \rfov = {\sqrt{b/a}-1 \over \sqrt{b/a}+1},\]
with $\cev=1$ and $\cfov=1+\sqrt{2}$ as before.  
However, the bound (PSA) becomes slightly less
accurate, as $\PSA(\BA) = [a,b] + \DISK{\eps}$ 
consists of the interval $[a,b]$ surrounded by a border of radius $\eps$
(see Figure~\ref{fig:psaA}, top).  
The constant $\cpsa$ now involves a length scale,
$\cpsa(\eps) = (b-a)/(\pi\eps) + 1$.  Descriptive pseudospectral bounds
require one to balance the more accurate convergence rates obtained for
small $\eps$ against the growth of the constant $\cpsa(\eps)$. 
Figure~\ref{fig:Abounds} illustrates this situation for $\SPEC(\BA)=[a,b] = [1,2]$.

The pseudospectra shown in Figure~\ref{fig:psaA} highlight a subtle practical
aspect of applying the (PSA) bound.  
To compute (PSA) for the case $N=16$, one might prefer to use the 
upper bound on $\PSA(\BA)$ obtained from the $N=\infty$ case 
(a geometrically simpler set).  
Replacing $\PSA(\BA)$ by this upper
bound will slow the asymptotic convergence rate ever so slightly,
but could reduce the boundary length, and hence $\cpsa(\eps)$.

\exhead{Example B: None descriptive}  

As described in Section~\ref{sec:survey}, each of the (EV), (FOV), and (PSA) bounds can be deceived by low-dimensional nonnormality.  
Example~B illustrates this shortcoming:%
\footnote{This example is essentially an extreme version of the 
diagonalizable example constructed by Greenbaum and Strako\v{s}~\cite{gs94} 
to demonstrate the failure of the pseudospectral bound (PSA).}
\begin{equation} \label{eq:B}
\BA = \left[\begin{array}{cc}1&\alpha\\ 0 & 1\end{array}\right]
       \oplus \widehat{\BLambda},\qquad \alpha\gg2,
\end{equation}
where $\widehat{\BLambda}$ is a diagonal matrix with 
uniformly distributed entries in the positive real
interval $[1,b]$ for $b>1$.
When $\alpha$ is large, the $2\times 2$ Jordan bock will dominate
$\FOV(\BA)$ and $\PSA(\BA)$ even for some fairly small values of $\eps$.
(Figure~\ref{fig:WpsaB} shows an example.)
A polynomial $p(z)$ with two roots at $z=1$ 
would annihilate this Jordan block, 
leaving a normal matrix with eigenvalues in the interval $[1,b]$ to 
handle at later iterations.
This structure allows one to bound the norm of the GMRES residual
\emph{independent of $\alpha$}.
To see this, replace the optimal GMRES polynomial $p(z)$ with 
$(1-z)^2q(z)$ at iteration $k\ge 2$ to get the upper bound 
\begin{eqnarray}
\mingmres \norm{p(\BA)} 
 &\le& \mingmresq \|(\BI-\BA)^2 q(\BA)\|_2  
 \ =\ \mingmresq \|\Bzero \oplus (\BI-\widehat{\BLambda})^2 q(\widehat{\BLambda})\|_2   \nonumber
      \\[.25em]
 &\le& \mingmresq \max_{z\in [1,b]} |1-z|^2 |q(z)|  \nonumber\\[.25em]
 &\le& |1-b|^2 \mingmresq \max_{z\in[1,b]} |q(z)|
 \ \le\  2@|1-b|^2 \left(\frac{\sqrt{b}-1}{\sqrt{b}+1}\right)^{k-2}.
\label{eq:exBgmres}
\end{eqnarray}
This last step just uses Chebyshev approximation on the interval $[1,b]$.
Now if $b \le 1+1/\sqrt{2}$ (so $2@|1-b|^2\le 1$), 
\[\mingmres \norm{p(\BA)} \le \left(\frac{\sqrt{b}-1}{\sqrt{b}+1}\right)^{k-2}.\] This bound ensures convergence to the tolerance \textsc{tol} in
$\lceil 2+\log(\mbox{\sc tol})/\log(\rho)\rceil $ iterations,
where $\rho = (\sqrt{b}-1)/(\sqrt{b}+1)$, as given in Table~\ref{table:all}.


\begin{figure}[t!]
\begin{center}
\includegraphics[scale=0.45]{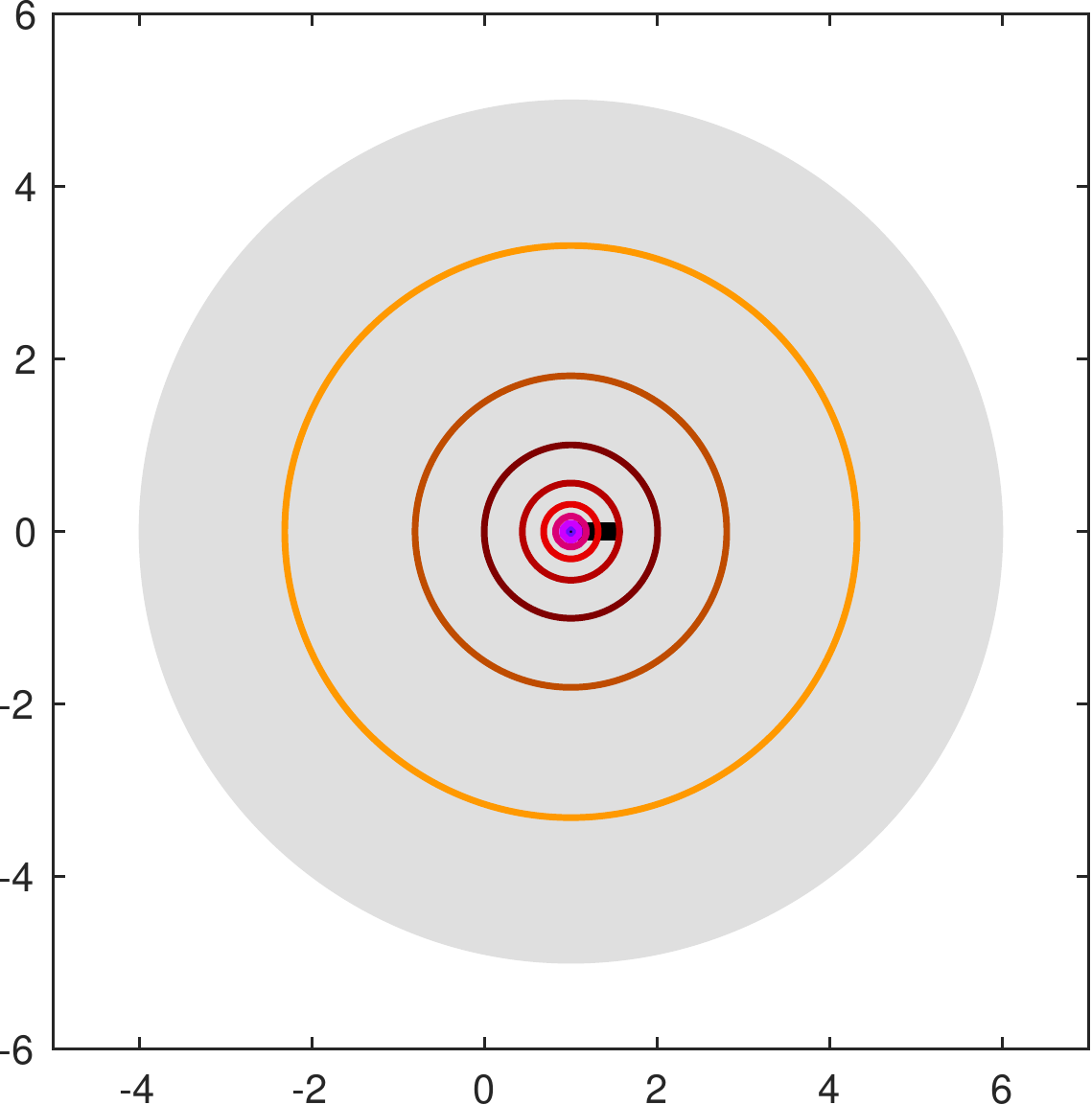}\quad
\includegraphics[scale=0.45]{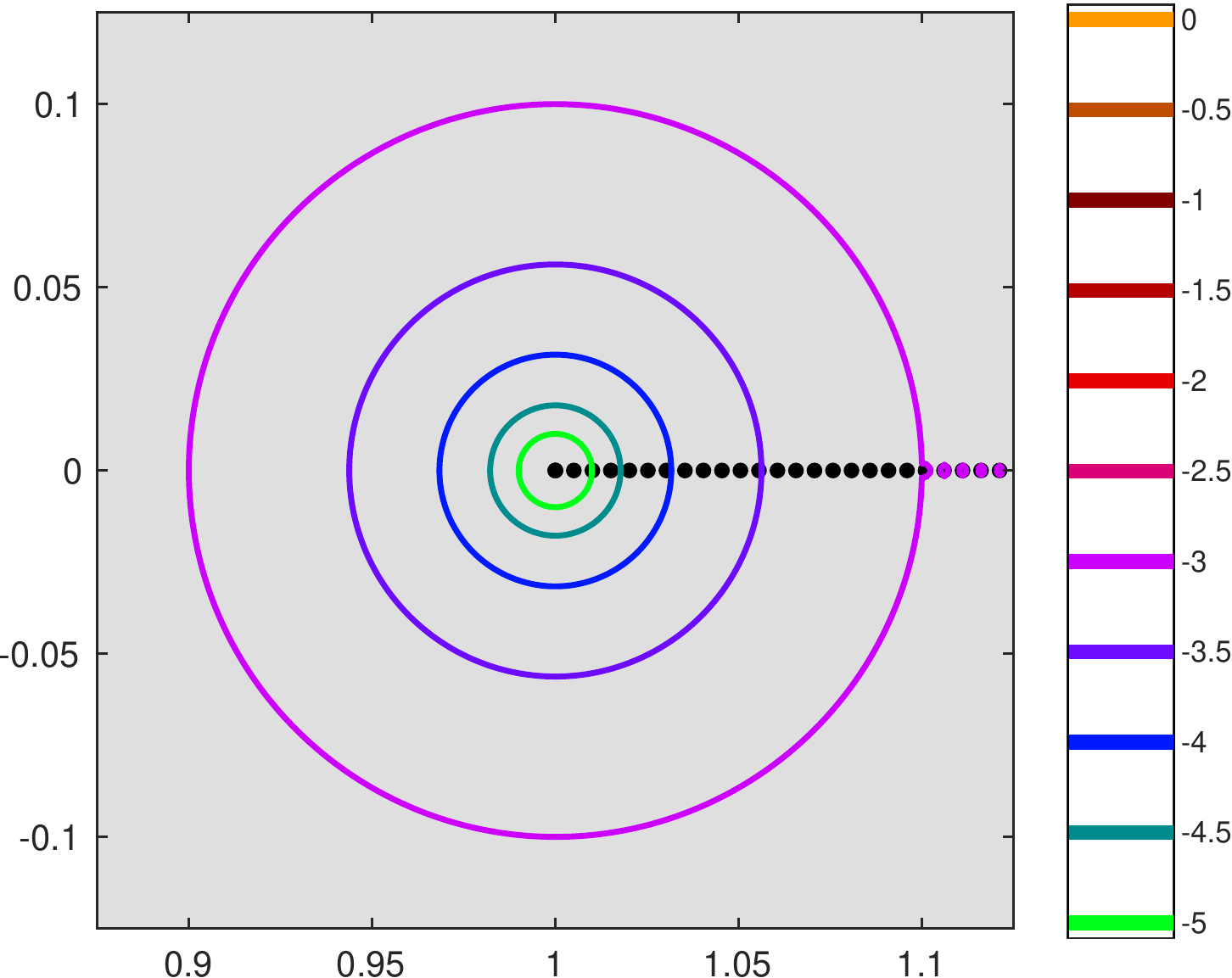}
\begin{picture}(0,0)
\put(-70,17){\emph{zoom}}
\end{picture}
\end{center}

\caption{\label{fig:WpsaB}
Field of values $($gray disk$)$, $\eps$-pseudospectra $(\eps=10^{-5},\ldots, 10^{0})$
and eigenvalues $($small dots$)$ for Example~B with $\alpha=10$,  $b=3/2$, and $N=102$.  
The $2\times 2$ Jordan block determines $\FOV(\BA)$ and dominates $\PSA(\BA)$ for
the $\eps$ values shown, exerting strong influence on the associated convergence bounds.
Nevertheless, this Jordan block does not much delay GMRES convergence.
}
\end{figure}

Since $\BA$ is nondiagonalizable, $\cev=\infty$ and
(EV) does not apply.  
The field of values of $\BA$ grows ever larger with $\alpha$.  
Note that
\[ \FOV\bigg(\left[\begin{array}{cc} 1 & \alpha \\ 0 & 1 \end{array}\right]\bigg) = 1 + \CDISK{\alpha/2}.\]
Thus if $b \le \alpha/2$, the normal eigenvalues 
on the diagonal of $\widehat{\BLambda}$ are all embedded within
the field of values of the $2\times 2$ Jordan block, and hence
\[ \FOV(\BA) = 1 + \CDISK{\alpha/2}, \rlap{\qquad $b \in (1,\alpha/2]$.} \]
(The left plot of Figure~\ref{fig:WpsaB} shows this scenario.)
In any case, $1+\CDISK{\alpha/2} \subseteq \FOV(\BA)$, and so if
$\alpha\ge 2$, then $0\in \FOV(\BA)$ and (FOV)~cannot give convergence.

Analysis of the pseudospectral bound is more involved. 
In Example~F we shall see that (PSA) accurately predicts convergence
for a Jordan block by taking $\eps$ sufficiently small.
The $\widehat{\BLambda}$ component in the present example~(\ref{eq:B})
adds the crucial complication: 
as seen in Example~A, for Hermitian matrices
the bound (PSA) is best when $\eps$ is relatively large 
($10^{-1}$ and $10^{-2}$ in Figure~\ref{fig:Abounds}).
By explicitly computing the norm of the resolvent, one can show that
\begin{equation} \label{eq:psaJ2}
 1 + \Delta_{\sqrt{\alpha \eps + \eps^2}}
    \ =\  
   \PSA\bigg(\left[\begin{array}{cc} 1 & \alpha \\ 0 & 1 \end{array}\right]\bigg) 
    \ \subseteq\  
   \PSA(\BA).
\end{equation}
Since $\PSA(\BA)$ must contain the disk $1+\Delta_{\sqrt{\alpha\eps+\eps^2}}$, 
the boundary of $\PSA(\BA)$ must encircle this set, ensuring that, for any $\eps>0$,
\[ \cpsa(\eps)\ge {2\pi \sqrt{\alpha \eps + \eps^2} \over 2\pi \eps} \ge \sqrt{\alpha/\eps}.\]
Moreover, equation~(\ref{eq:psaJ2}) implies that 
\[ 0 \in \PSA(\BA) \rlap{\quad \mbox{for all $\eps>{1\over2}\big(\sqrt{\alpha^2+4}-\alpha\big)$},}\] 
i.e., $\eps>{1\over2}\big(\sqrt{\alpha^2+4}-\alpha\big) \approx 1/\alpha$ 
for large $\alpha$: 
thus ruling out the large values of $\eps$ that could control $\cpsa(\eps)$.
By increasing $\alpha$, we can make $\cpsa(\eps)$ arbitrarily large,
even though~(\ref{eq:exBgmres}) bounds GMRES convergence independent of $\alpha$.  
On the other hand, the $\widehat{\BLambda}$ block gives a 
nonzero asymptotic rate of convergence for any $\eps>0$ 
(unlike Example~F, where a single eigenvalue will permit 
arbitrarily fast convergence rates as $\eps\to 0$).
Thus in Table~\ref{table:all} we say that (PSA) predicts
infinitely many iterations.

\exhead{Example C: Only (EV) descriptive}  
Aside from trivial cases, the field of values and pseudospectral bounds 
both involve approximation problems on regions in the complex plane that give
nonzero asymptotic convergence rates $\rfov$ and $\rpsa(\eps)$.
However, these sets could be unduly influenced by 
parts of the spectrum that could be effectively eliminated 
at an early stage of the GMRES iteration.  When a few eigenvalues are far 
from the rest of the spectrum, (EV)~can be more descriptive than 
(PSA) and (FOV), because the approximation problem in (EV) is posed 
on a discrete point set, and isolated outliers do not influence $\rev$.
Define
\[ \BA = \delta \oplus \widehat{\BLambda}, \rlap{$\qquad \delta\in\R$,} \]
where $\widehat{\BLambda}$ is a diagonal matrix with
entries uniformly distributed in the real, positive interval $[a,b]$,
and $0<|\delta|\ll a < b$.

Since $\BA$ is normal, convergence is determined by the spectrum.
The bound (EV), with $\cev=1$, is exact.  This convergence can be
bounded using the polynomial $p_k(z)=(1-z/\delta) q_{k-1}(z)$, where
$q_{k-1}$ is the optimal degree-$k-1$ residual polynomial for the 
interval $[a,b]$.  The eigenvalue $\delta$ near the origin
causes an initial stagnation~\cite{dtt98}; the polynomial
$p_k$ suggests that this plateau will last no longer than the
number of iterations it takes for $q_{k-1}(z)$ to overcome $(1-z/\delta)$ 
for $z\in[a,b]$, i.e.,
\[ \Bigg\lceil 1 + \frac{\log|\delta| - \log 2(b-\delta)}{\log(\rev)}\Bigg\rceil \,\mbox{ iterations},\]
where 
\[ \rev = {\sqrt{b/a}-1 \over \sqrt{b/a}+1}\]
is the asymptotic convergence rate associated with the interval $[a,b]$.
Figure~\ref{fig:Cbounds} shows the stagnation caused by the eigenvalue near the origin.


\begin{figure}[b!]
\begin{center}
\vspace*{2em}
\includegraphics[scale=0.57]{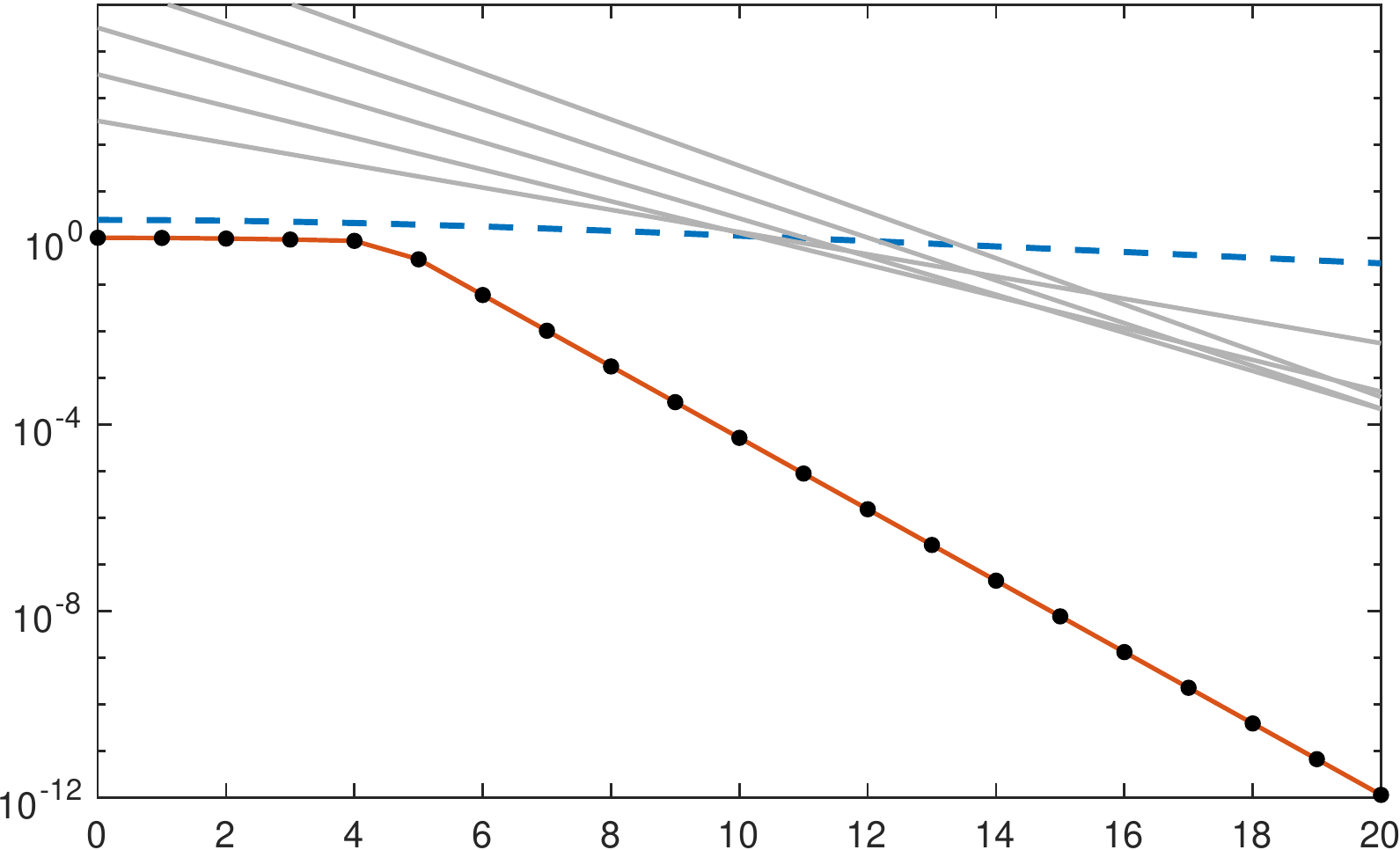}

\begin{picture}(0,0)
 \put(-180,110){\small $\displaystyle{\mingmres\! \norm{p(\BA)}}$}
 \put(-10,0){\small iteration, $k$}
 \put(-106,133){\small (FOV)}
 \put(-151,141){\small $\eps=10^{-3}$}
 \put(-151,152){\small $\eps=10^{-4}$}
 \put(-151,164){\small $\eps=10^{-5}$}
 \put(-104,172){\rotatebox{30}{\small $\eps=10^{-6}$}}
 \put(-81,172){\rotatebox{30}{\small $\eps=10^{-7}$}}
 \put(6,95){\rotatebox{-30}{\small (EV), exact}}
\end{picture}
\end{center}

\vspace*{-5pt}
\caption{\label{fig:Cbounds}
Convergence bounds for Example~C with $\delta=0.01$ and $n=\infty$, 
using underestimates of $\PSA(\BA)$ for the approximation problem in (PSA).}
\end{figure}


If $\delta < 0 < a < b$, then $\FOV(\BA) = [\delta,b]$ contains the origin,
and hence $\rfov=1$, and so (FOV) does not predict any convergence.
If $0 < \delta < a < b$, then $\FOV(\BA) = [\delta,b]$ does not contain
the origin, and the asymptotic convergence rate 
\[ \rfov = {\sqrt{b/\delta}-1 \over \sqrt{b/\delta}+1}\]
will be close to one when $0<\delta \ll a < b$.
Provided $b$ is not too large, $\rev \ll \rfov$: 
(FOV) predicts slow convergence,
accurately describing the initial period of stagnation but missing the
transition to more rapid asymptotic convergence.

The pseudospectral bound suffers from the fact that it cannot 
treat $\delta$ as a single simple eigenvalue, eliminated at an
early stage of convergence.  Moreover, to give convergence 
we need $0\not \in \PSA(\BA)$, requiring $0 < \eps < |\delta|$. 
For any $\eps\in(0,(a-\delta)/2)$, 
the component of $\PSA(\BA)$ about $\lambda=\delta$ is a disk of radius $\eps$,
and this disk will always influence the asymptotic convergence rate 
associated with $\PSA(\BA)$.
This effect diminishes as $\eps$ decreases, but such small
values of $\eps$ will give large constants
($\cpsa(\eps) = 2+(b-a)/(\pi\eps)$ for $n=\infty$) due to the 
interval $[a,b]$. 

This case is the least analytically compelling of our six examples.  
In particular, it is difficult to cleanly describe 
the convergence rates associated with (PSA).\ \ 
In Figure~\ref{fig:Cbounds}, we illustrate (PSA) for 
$\delta=0.01$, $[a,b] = [1,2]$, and $n=\infty$.  
Since we argue that (PSA) gives poor bounds for this scenario, 
we do not want our portrayal of the bounds to suffer 
from an overestimate $\PSA(\BA)$ that would yield 
an easily computable asymptotic convergence rate 
that is not true to the bound.
Instead, Figure~\ref{fig:Cbounds} shows an \emph{underestimate} of (PSA)
using the slightly faster convergence rate 
for the union of two intervals, $[\delta-\eps, \delta+\eps] \cup [a-\eps, b+\eps] \subset \PSA(\BA)$.\ \ 
The asymptotic convergence rate $\rho(\delta,\eps,a,b)$ 
for the union of these two real intervals can be expressed 
in terms of elliptic integrals, as described and implemented in MATLAB by
Fischer~\cite{fischer96}.  The pseudospectral constants
are $\cpsa(\eps) = 2 + (b-a)/(\pi\eps)$.
In place of (PSA), Figure~\ref{fig:Cbounds} 
shows the underestimates $\cpsa(\eps) \rho(\delta,\eps,a,b)^k$.

Of course, one could adapt (FOV) and (PSA) to handle a single outlying 
eigenvalue by splitting up the spectrum, as in Theorem~\ref{gensp};
however, many problems present a range of outlying eigenvalues at different
scales, which are more difficult to identify and handle.

\exhead{Example D: Only (EV) not descriptive}  

The bound (EV) fails for any nondiagonalizable matrix,
yet defectiveness need not imply poor GMRES convergence.
Consider a small perturbation to the identity matrix,
\begin{equation} \label{eq:exD}
  \BA = \left[\begin{array}{cccc}
                1 & \delta & & \\
                  & 1 & \ddots & \\
                  &   & \ddots & \delta \\
                  &   & & 1
          \end{array}\right],\rlap{$\qquad 0<\delta\ll 1$.}
\end{equation}
This matrix is completely defective for all $\delta\ne 0$,
but small values of $\delta$ exert only the slightest impact
on convergence.  Unlike the case of $\delta=0$, 
for most initial residuals GMRES will  require $n$ iterations 
to converge \emph{exactly}; however, for small $\delta>0$ 
GMRES will make excellent progress at each step.
(For bounds describing how small changes to $\BA$ affect
the convergence of GMRES for a fixed $\Br_0$, see~\cite{SEM13}.)
If $\Br_0$ is the $n$th column of the identity matrix, 
Ipsen~\cite{Ips00} derived the exact formula
\begin{equation} \label{Dlower}
 \frac{\norm{\Br_k}}{\norm{\Br_0}} = \delta^k 
   \sqrt{\frac{1-\delta^2}{1-\delta^{2(k+1)}}}.
\end{equation}
For small $\delta$ this convergence must be close to the worst case, since
\[ \mingmres \norm{p(\BA)} \le \|(\BI-\BA)^k\| = \delta^k.\] 


\begin{figure}[b!]  
\begin{center}
\vspace*{2em}
\includegraphics[scale=0.57]{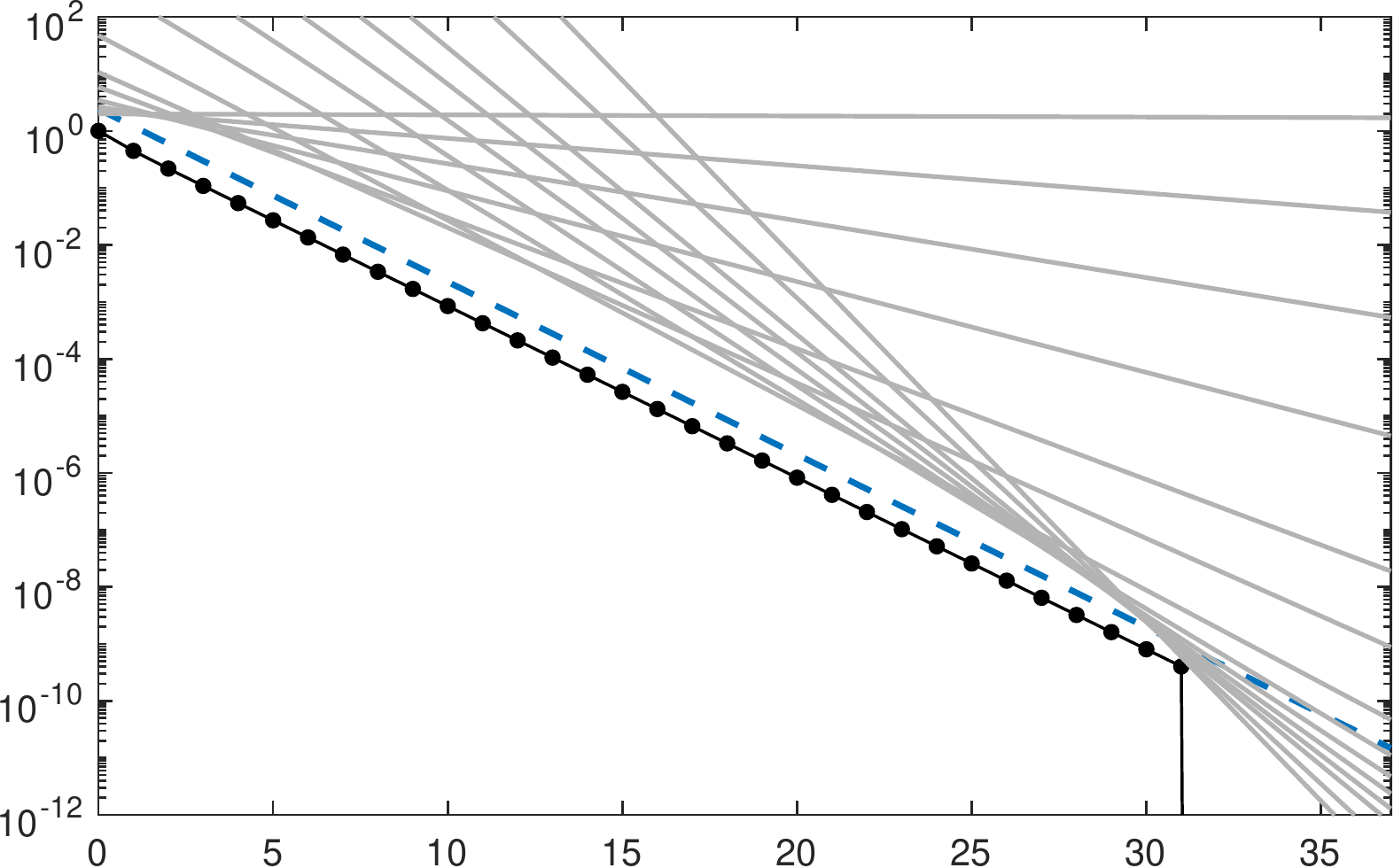}

\begin{picture}(0,0)
 \put(-180,114){\small $\displaystyle{\mingmres\! \norm{p(\BA)}}$}
 \put(-10,0){\small iteration, $k$}
 \put( 133,150){\small $\eps=0.5$}
 \put( 133,132){\small $\eps=0.4$}
 \put( 133,112){\small $\eps=0.3$}
 \put( 133, 91){\small $\eps=0.2$}
 \put( 133, 65){\small $\eps=0.1$}
 \put( 133, 51){\small $\eps=0.05$}
 \put( 133, 38){\small $\eps=0.01$}
 \put(-28,172){\rotatebox{30}{\small $\eps=10^{-11}$}}
 \put(-40,172){\rotatebox{30}{\small $\eps=10^{-9}$}}
 \put(-55,172){\rotatebox{30}{\small $\eps=10^{-7}$}}
 \put(-66,172){\rotatebox{30}{\small $\eps=10^{-6}$}}
 \put(-78,172){\rotatebox{30}{\small $\eps=10^{-5}$}}
 \put(-89,172){\rotatebox{30}{\small $\eps=10^{-4}$}}
 \put(-104,172){\rotatebox{30}{\small $\eps=10^{-3}$}}
 \put(-30,114){\rotatebox{-25}{\small (FOV)}}
 \put(-25,97){\rotatebox{-25}{\small exact}}
 \put(-100,30){$n=32$}
\end{picture}

\vspace*{10pt}
\includegraphics[scale=0.57]{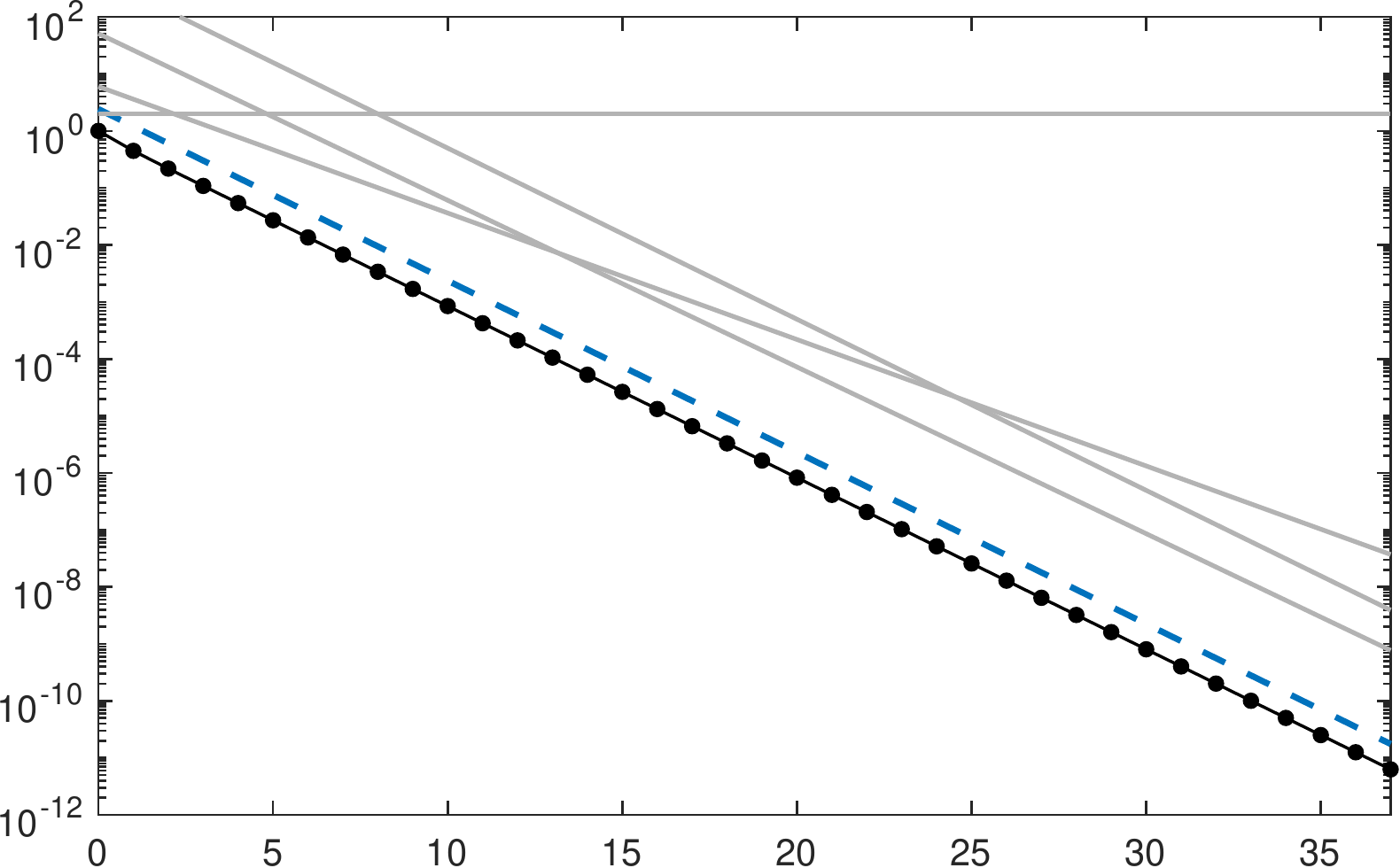}

\begin{picture}(0,0)
 \put(-180,114){\small $\displaystyle{\mingmres\! \norm{p(\BA)}}$}
 \put(-10,0){\small iteration, $k$}
 \put( 133,150){\small $\eps=0.5$}
 \put( 133, 68){\small $\eps=10^{-1}$}
 \put( 133, 58){\small $\eps=10^{-3}$}
 \put( 133, 48){\small $\eps=10^{-2}$}
 \put(-30,116.5){\rotatebox{-26}{\small (FOV)}}
 \put(-25,97){\rotatebox{-25}{\small exact}}
 \put(-100,30){$n=\infty$}
\end{picture}
\end{center}

\vspace*{-3pt}
\caption{Convergence bounds for Example~D with $\delta=1/2$ 
for $n=32$ and $n=\infty$.}
\label{Dbounds}
\end{figure}

\begin{figure}[t!]
\begin{center}
\includegraphics[height=1.9in]{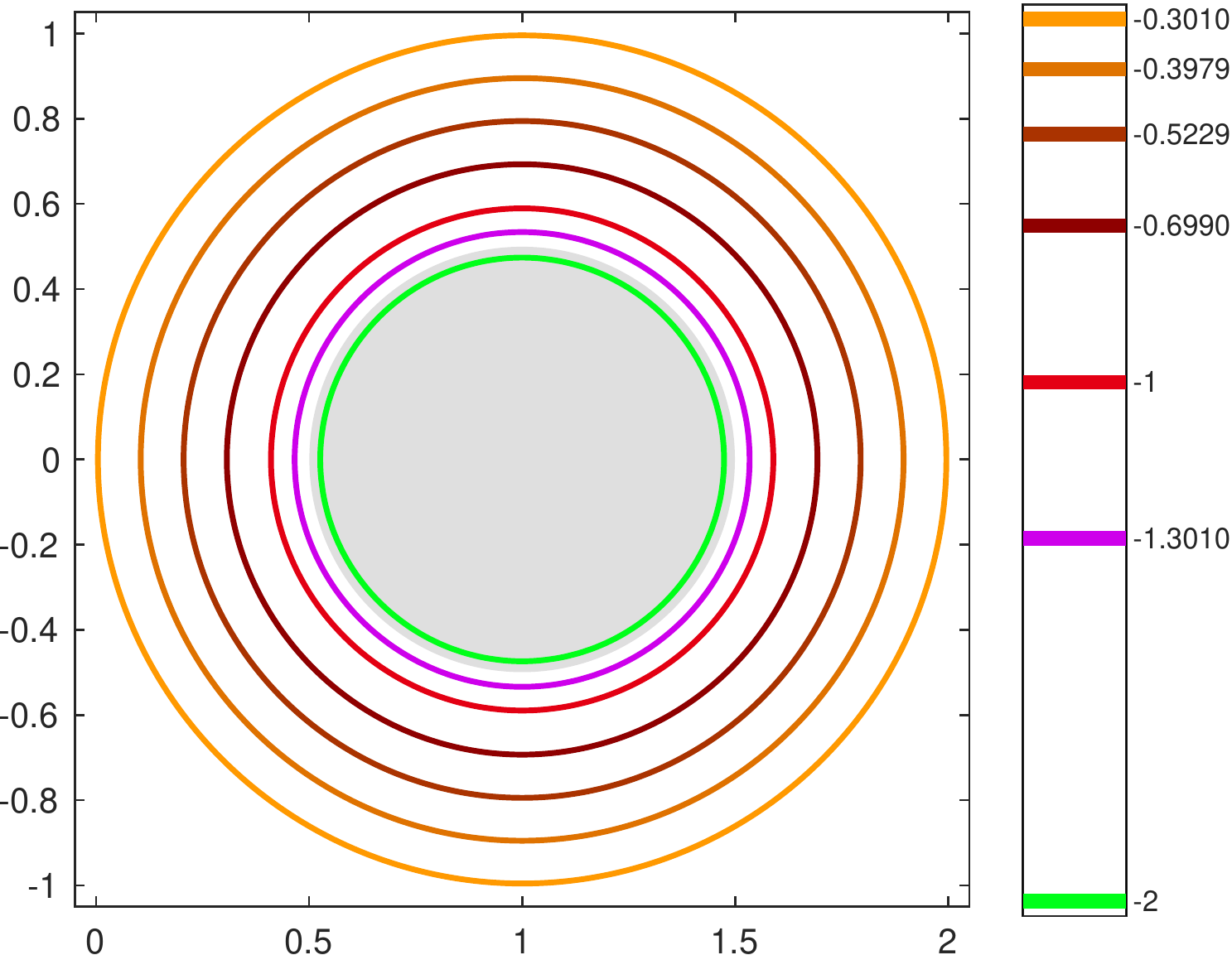}\quad
\includegraphics[height=1.9in]{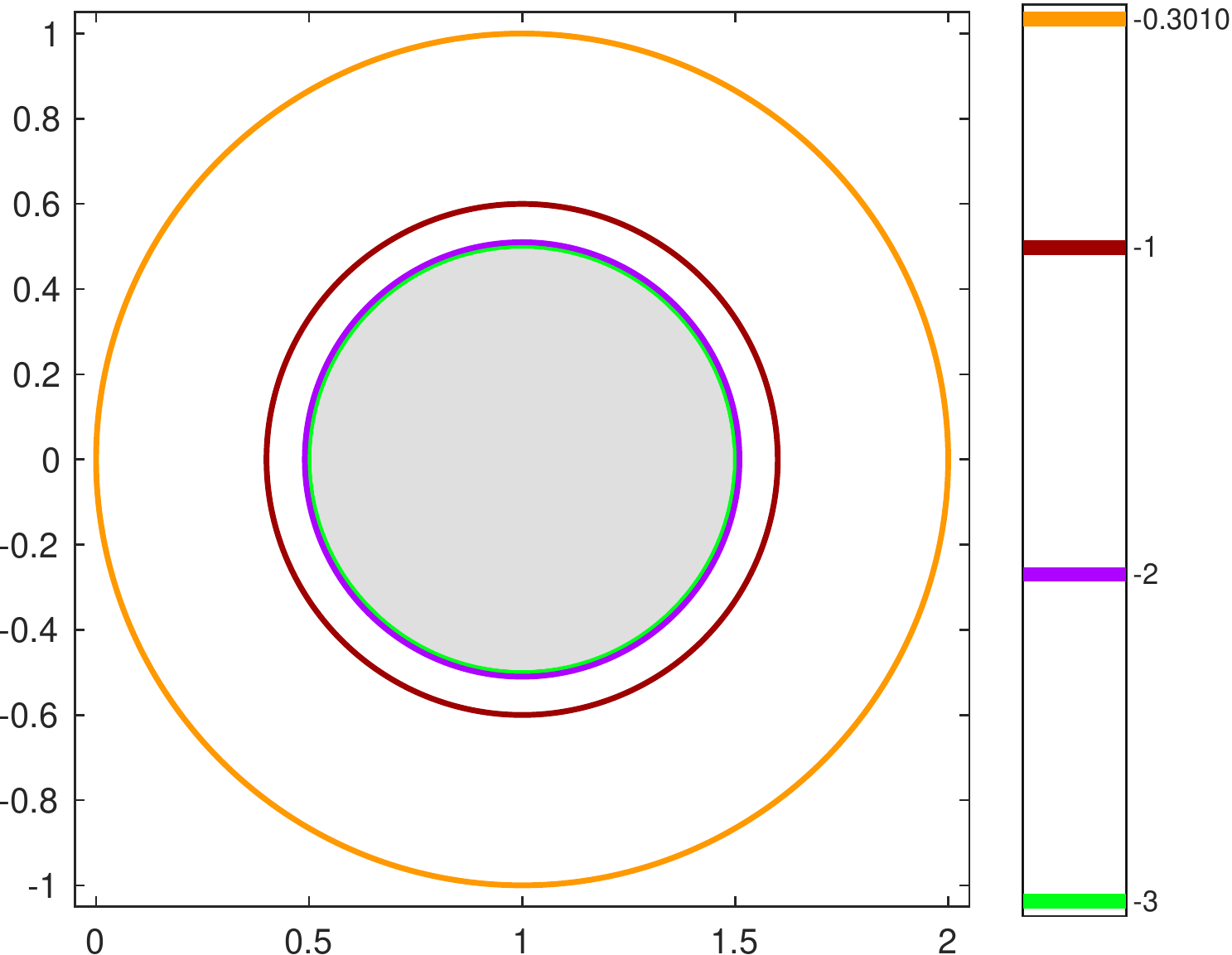}
\begin{picture}(0,0)
\put(-353,10){$n=32$}
\put(-166,10){$n=\infty$}
\end{picture}
\end{center}

\vspace*{-3pt}
\caption{Field of values $\FOV(\BA)$ and $\eps$-pseudospectra $\PSA(\BA)$
for Example~D with $\delta=1/2$.  
On the left, $n=32$ and $\eps=0.5$, $0.4$, $0.3$, $0.2$, $0.1$, $0.05$, and $0.01$;
on the right, $n=\infty$ and $\eps=0.5$, $10^{-1}$, $10^{-2}$, and $10^{-3}$ (the last of these is barely visible).}
\label{WpsaD}
\end{figure}

Since $\BA$ is nondiagonalizable, (EV) fails to predict convergence.
The field of values is known explicitly for this example, 
$\FOV(\BA)=1+\DISK{\delta \cos(\pi/(n+1))}$~\cite[\S1.3]{gustrao},
leading to the exact formulation of (FOV): 
\[\mingmres \norm{p(\BA)} 
  \le \big(1+\sqrt{2}\big) \bigg(\delta \cos\Big({\pi\over n+1}\Big)\bigg)^k.\]
In the case of $n=\infty$, the infinite dimensional Toeplitz matrix
has spectrum $\SPEC(\BA) = 1+\DISK{\delta}$, and 
the closure of the field of values is $\overline{\FOV(\BA)} = 1 + \DISK{\delta}$; 
see~\cite[sect.~7.5]{BG05}.

The pseudospectra of $\BA$ are also disks~\cite{TE05},
but the radii of these disks are not known in closed form for general $n$.
In the limit $n\to\infty$, a theorem of Reichel and Trefethen~\cite{rt92}
shows that $\PSA(\BA) = 1+\DISK{\delta+\eps}$;
in this case, the bounds give:
\[ {\rm (FOV):}\ \mingmres \norm{p(\BA)} \le \big(1+\sqrt{2}\big)\,\delta^k \qquad
   {\rm (PSA):}\ \mingmres \norm{p(\BA)} \le 
                     (1+\delta/\eps) (\delta+\eps)^k.\]
In the limit as $\delta\to0$, both these bounds predict convergence
to arbitrary desired tolerance in a single iteration, though
the asymptotic rate $\rfov = \delta^k$ is slightly sharper
than the pseudospectral rate $\rpsa = (\delta+\eps)^k$.
One must balance the size of $\cpsa$ against
the accompanying convergence rate, just as for the normal
matrix with $\SPEC(\BA)\subseteq[a,b]$ discussed in Example~A.\ \ 
Figure~\ref{Dbounds} illustrates this example 
for $\delta=1/2$ with $n=32$ and $n=\infty$.
For the ``exact'' curve, we plot the lower bound given by
equation~(\ref{Dlower}).
The bound (PSA) is particularly interesting in the 
finite-dimensional case: For very small values
of $\eps$, (PSA) predicts convergence rates that are
too quick, associated with large constants that ensure
the bound does not intersect the convergence curve for $k<n$.
Figure~\ref{WpsaD} shows $\FOV(\BA)$ and $\PSA(\BA)$ for
$n=32$ and $n=\infty$.

Taking $\BA$ to be nondiagonalizable makes for a
clean example, but it is not necessary.
Perturbing the diagonal entries of $\BA$ from~$\lambda$ to
distinct nearby values, one can obtain finite but arbitrarily 
large values of $\cev=\COND(\BV)$, which will depend on the 
off-diagonal $\delta$ value; at the same time, taking
all the eigenvalues close to~1 makes $\rev$ arbitrary close to~0,
independent of $\delta$.  Yet $\delta$ can be seen to have a
crucial role in determining the asymptotic rate of convergence.

\exhead{No Example: Only (FOV) descriptive} 
Theorem~\ref{psafov} indicates that examples where (FOV) 
significantly outperforms (PSA) will be difficult to find.  
Since $\PSA(\BA)\subset\FOV(\BA)+\DISK{\eps}$, the rate
$\rpsa(\eps)$ can only be significantly slower 
than $\rfov$ when this containment is sharp, $\eps$ is 
relatively large, and the sets $\FOV(\BA)$ and $\PSA(\BA)$
are near the origin.
In such cases, values of $\eps$ that give convergence rates
$\rpsa(\eps)$ similar to $\rfov$ will be associated
with small $\eps$ and thus large constant
terms $\cpsa(\eps)$.  But proximity to the origin
implies that $\rfov$ will predict slow convergence,
and the pseudospectral bounds, while less sharp,
should still provide a decent indication of the
nature of convergence.
For an example in this vein, modify Example~D:
take a Jordan block with eigenvalue $\lambda$ near the
origin and superdiagonal having the constant $\delta$
for $0\ll \delta <  |\lambda|/\cos(\pi/(n+1))$ to ensure
the field of values extends near the origin but does
not contain it.

\exhead{Example E: Only (FOV) not descriptive}  
The field of values bound (FOV) is not descriptive when
there is initial stagnation followed by more rapid convergence.
The simplest example of GMRES stagnation at the first iteration
occurs for the Hermitian indefinite matrix
\[\BA = \left[\begin{array}{cc} 1 & 0\\ 0 & -1 \end{array}\right],\]
which was already used by Saad and Schultz~\cite{ss86} to illustrate
stagnation of GMRES(1).\ \ Since $\BA\in\C^{2\times2}$,
the second iteration gives exact convergence.  

Since $\BA$ is normal, (EV) is exact, correctly predicting two iterations
to solve the polynomial approximation problem on the discrete set of
two eigenvalues.  Since $\FOV(\BA)$ is the convex hull of $\SPEC(\BA)$,
we have $0\in\FOV(\BA) = [-1,1]$, and thus (FOV) predicts no convergence.
The $\eps$-pseudospectrum consists of the union of two disks of radius $\eps$,
each centered at an eigenvalue.  Thus, $\cpsa(\eps) = 2$ 
independent of $\eps\in(0,1]$. Taking $\eps\to0$, (PSA) predicts convergence
to arbitrary accuracy in two iterations.

The bound (FOV) can also fail when $\BA$ is a nonnormal matrix
for which $0\in \FOV(\BA)$ but $0 \not\in \PSA(\BA)$ for $\eps$
sufficiently small.
The following example, with
uniformly ill-conditioned eigenvectors, is the only case we 
discuss in which $\COND(\BV)$ is large and yet (EV) is still descriptive.
Let $\BLambda$ be diagonal with eigenvalues uniformly distributed
in the interval $[1,2]$ and define
\begin{equation}
  \BA = \BV\BLambda\BV^{-1}, \quad\mbox{where}\ 
    \BV = \left[\begin{array}{c c c c c}
         1  &  \sqrt{1-\delta} & \sqrt{1-\delta} & \cdots & \sqrt{1-\delta}\\
          &   \sqrt{\delta} & 0 & \cdots & 0\\
          & & \sqrt{\delta} & \ddots & \vdots \\
          & & & \ddots & 0 \\
          & & & & \sqrt{\delta}  \\
          \end{array}\right]
\label{fold}
\end{equation}
is upper triangular and $0<\delta\ll 1$ is a small positive parameter.
(The matrix $\BV$ is inspired by an example of Greenbaum and Strako\v{s}~\cite{gs94}.)

Clearly, the eigenvectors that form the column of $\BV$ 
are severely ill-conditioned;
looking at the upper left $2\times2$ block of $\BV$ alone
shows that $\COND(\BV)\ge (1+\sqrt{\delta})/\sqrt{\delta}$. 
One can show $\COND(\BV)=\ORDER(n/\sqrt{\delta})$ as
$n\to\infty$ and $\delta\to0$.  
For sufficiently small $\delta>0$,
$0\in\FOV(\BA)$ and (FOV) gives no convergence;
indeed, as seen in Figure~\ref{WpsaE},
 the field of values is quite large for small $\delta$.
The pseudospectra are also large, yet as $\eps$ is taken
small enough such that $0\not\in\PSA(\BA)$, 
the bound becomes increasingly descriptive.  
This is another case where the inclusion $\PSA(\BA)\subseteq \FOV(\BA)+\DISK{\eps}$
in Theorem~\ref{psafov} gives a poor upper bound on $\PSA(\BA)$ for
small $\eps$.


\begin{figure}[b!]
\begin{center}
\includegraphics[height=2.3in]{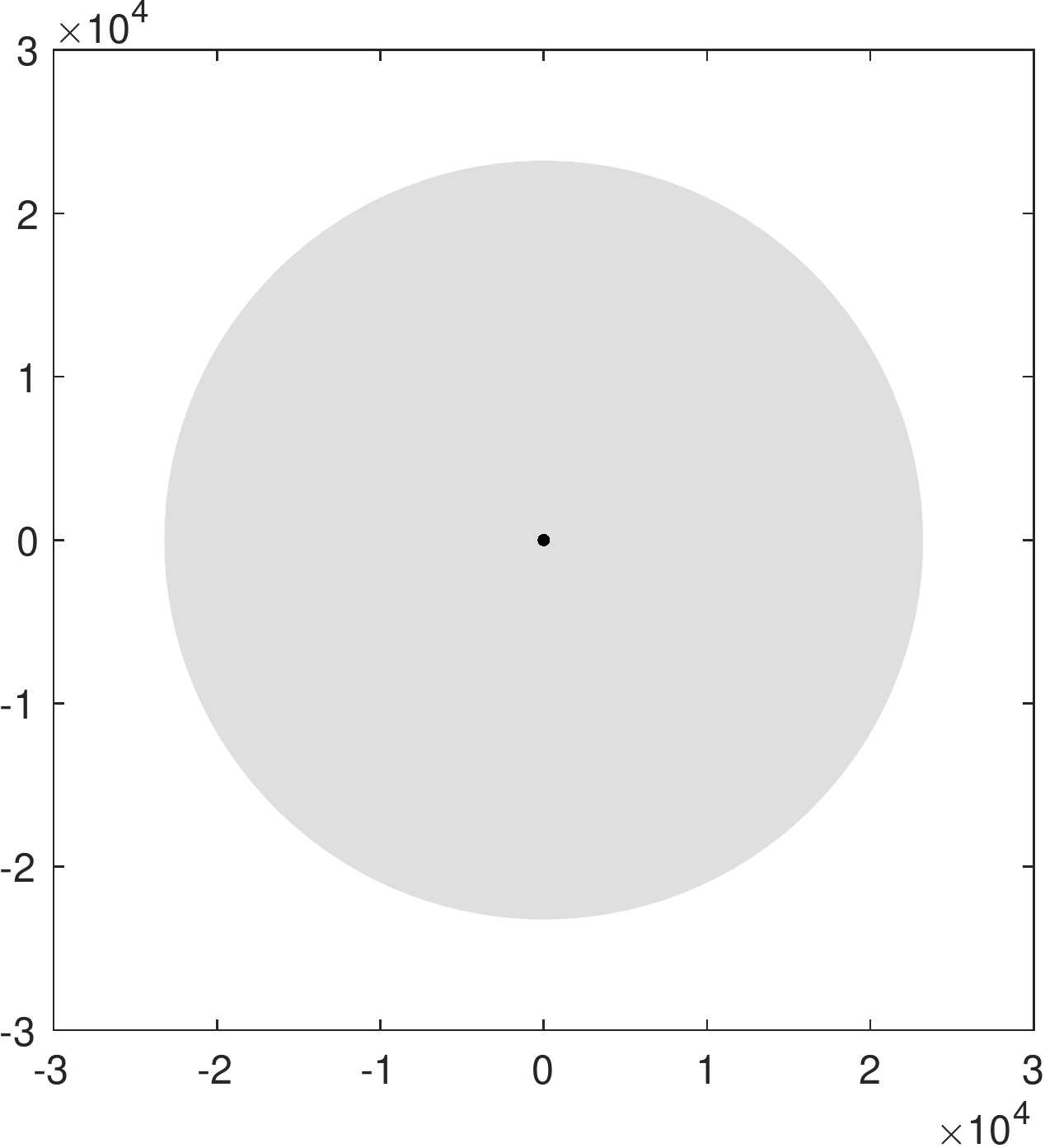}\quad
\raisebox{8.3pt}{\includegraphics[height=2.11in]{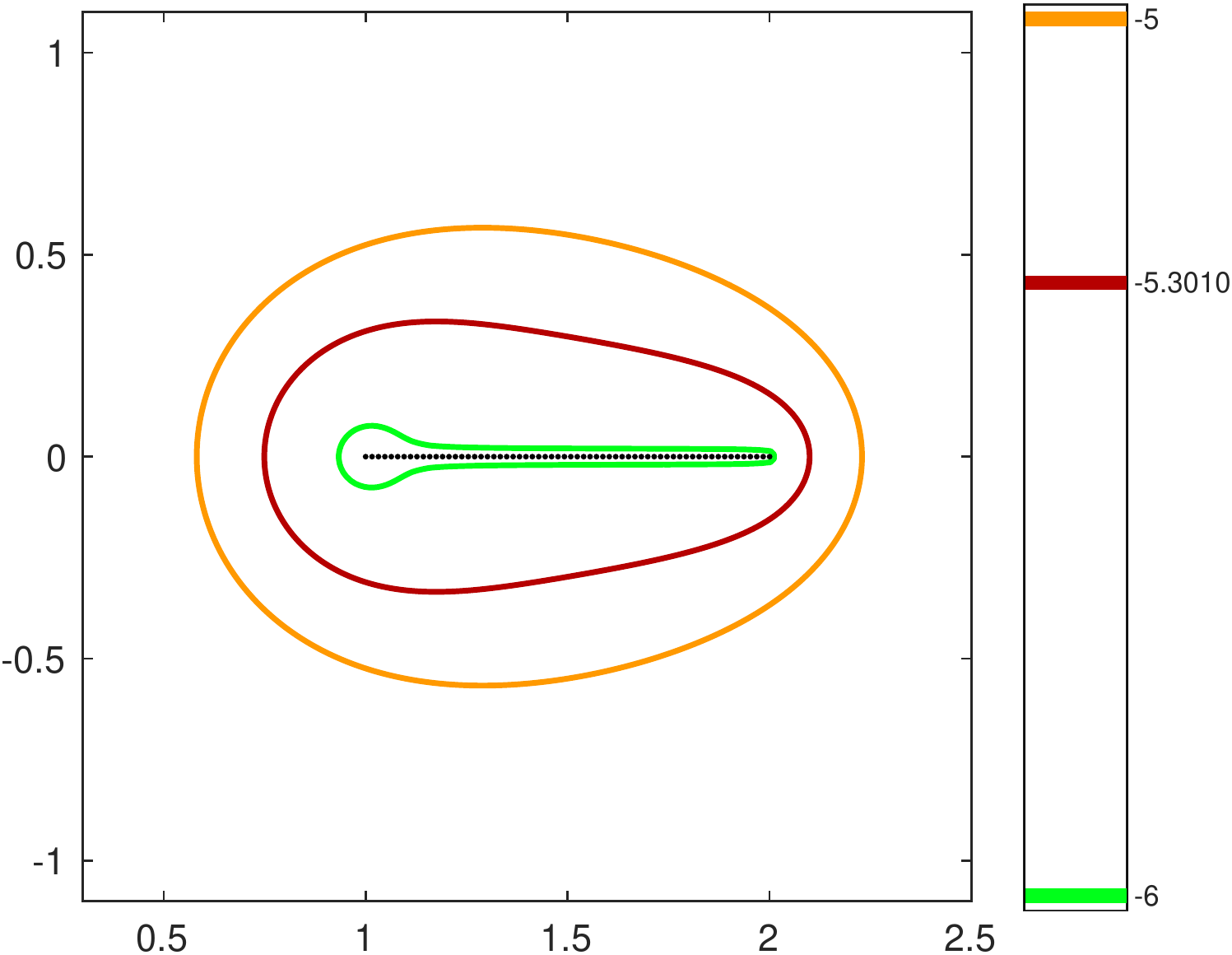}}
\end{center}

\caption{Field of values $\FOV(\BA)$ (left) and 
pseudospectra $\PSA(\BA)$ for $\eps = 10^{-5}$, $5\times 10^{-6}$, $10^{-6}$ 
(right) for the matrix~(\ref{fold}) with $\delta=10^{-8}$ and $n=64$.
Note the scale of the plot on the left.}
\label{WpsaE}
\end{figure}

\begin{figure}[t!]  
\begin{center}
\includegraphics[scale=0.57]{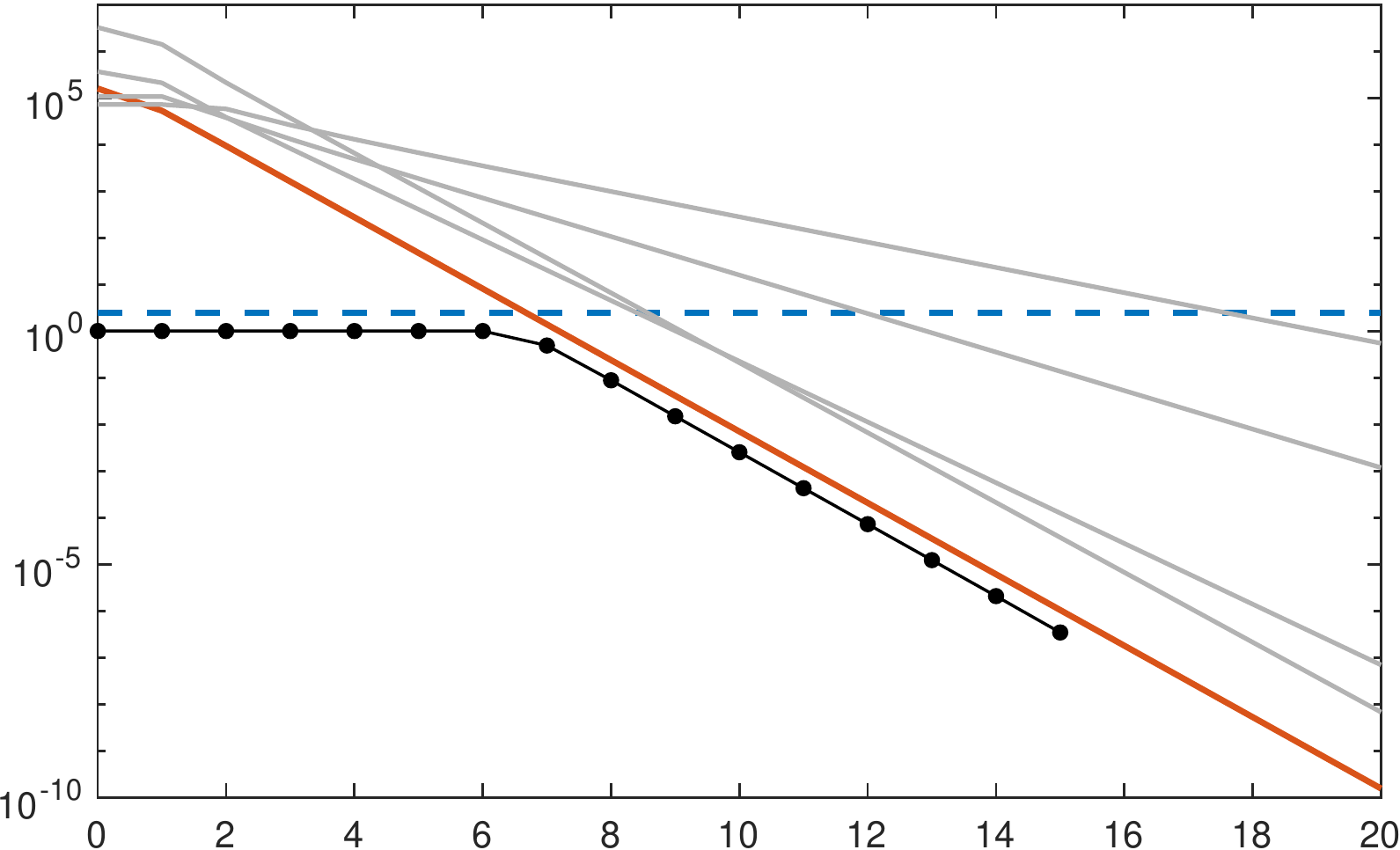}

\begin{picture}(0,0)
 \put(-180,127){\small $\displaystyle{\mingmres\! \norm{p(\BA)}}$}
 \put(-10,0){\small iteration, $k$}
 \put( 131,112){\small (FOV)}
 \put( 131,102){\small $\eps=10^{-5}$}
 \put( 131, 81){\small $\eps=5\times 10^{-6}$}
 \put( 131, 45){\small $\eps=10^{-6}$}
 \put( 131, 35){\small $\eps=10^{-7}$}
 \put(81,30){\small (EV)}
 \put(-5,84){\rotatebox{-28}{\small exact}}
\end{picture}
\end{center}

\vspace*{-5pt}
\caption{Convergence bounds for the matrix~(\ref{fold}) with $\delta=10^{-8}$ and $n=64$.}
\label{Ebounds}
\end{figure}

Figure~\ref{Ebounds} shows the bounds for $\delta=10^{-8}$ and $n=64$,
the same parameters used in Figure~\ref{WpsaE}.
The constant $\cev=\COND(\BV)$ is obtained by 
computing the condition number in MATLAB, 
with the first column of $\BV$ multiplied by $\sqrt{n}$ to
improve conditioning.
The rate $\rev$ is taken from the optimal polynomial
on the interval $[1,2]$.
The pseudospectral bound was determined for each $\eps$
by bounding $\PSA(\BA)$ from the outside by an convex polygon,
obtaining the convergence rate through numerical conformal
mapping, and then applying the bound~(\ref{convex}).
The ``exact'' curve was computed using the semidefinite
programming strategy of Toh and Trefethen~\cite{tt99},
as implemented in the SDPT3 Toolbox~\cite{TTT99}.
For this example, random initial residuals typically do not
lead to the long plateau obtained in the ``exact''
(Ideal GMRES) curve shown here. 
(Whether any initial residual attains the Ideal GMRES curve
for this example is not known.)
Not only does $\FOV(\BA)$ contain the origin for these parameters; 
it contains a circle of radius $10^4$ centered at the origin.  
For the values of $\eps$ used in Figure~\ref{WpsaE},
the pseudospectra $\PSA(\BA)$ omit the origin, 
giving the convergent bounds in Figure~\ref{Ebounds}.

\exhead{Example F: Only (PSA) descriptive}  

For the Jordan block in Example~D, the field of values bound (FOV)
captured the single convergence rate perfectly.  For the present
example, we seek a nondiagonalizable matrix that initially stagnates
but eventually converges more rapidly, making (FOV) misleading.  
Though it might seem like a gimmick, 
we could say the most extreme example of this behavior occurs for the matrix
\[ \BA = \left[\begin{array}{cc} 1 & \alpha \\ 0 & 1\end{array}\right],
\rlap{$\qquad \alpha \ge 2$,}\]
which featured as a submatrix in Example~B.\ \  
For $\alpha\ge2$, there exist right hand sides for which GMRES 
makes no progress at the first step.  Yet at the second step,
there is exact convergence since $\BA\in\C^{2\times2}$.

Since $\BA$ is nondiagonalizable, $\cev=\infty$ and 
(EV) does not apply.  
Nor does (FOV) give convergence, since $0 \in \FOV(\BA)=1+\overline{\DISK{\alpha/2}}$,
a disk centered at~1 with radius $\alpha/2$~\cite[\S1.3]{gustrao}. 
The pseudospectral bound, however, captures the exact convergence in
two iterations.  From~(\ref{eq:psaJ2}) we have 
 $\PSA(\BA) = 1+\DISK{\sqrt{\alpha\eps + \eps^2}}$, 
giving the constant
$\cpsa(\eps) = \sqrt{\alpha/\eps+1}$.
Using the residual polynomial $p(z) = (1-z)^k$, (PSA) gives the bound
\[ \mingmres \norm{p(\BA)} 
  \le \sqrt{\alpha/\eps + 1} \ \Big(\alpha\eps+\eps^2\Big)^{k/2}.\]
At step $k=2$, the upper bound becomes 
$\alpha^{3/2} \eps^{1/2} + O(\eps^{3/2})$ as $\eps\to0$,
and hence by taking $\eps>0$ sufficiently small, (PSA) predicts
convergence to arbitrary accuracy at the second iteration.

For a more interesting example that does not rely on the 
dimension $n=2$, take a Jordan block and progressively scale down 
the off-diagonal entries:  for fixed $\beta>0$, 
\begin{equation}
  \BA = \left[\begin{array}{ccccc}
          1 & \beta & && \\
            & 1 & \textstyle{\beta\over2} & & \\
            &   & 1 & \ddots & \\
            &   &   & \ddots & \textstyle{\beta \over (n-1)} \\
            &   &   & & 1
          \end{array}\right].
\label{intmat}
\end{equation}
Reichel and Trefethen called a related example 
an ``integration matrix''~\cite{rt92} and noted that its pseudospectra are disks.
Driscoll, Toh, and Trefethen showed that Ideal GMRES exhibits an improving 
convergence rate for this matrix, and linked that behavior to the notable
shrinking of $\PSA(\BA)$ as $\eps\to0$~\cite[p.~564]{dtt98}.

Since $\BA$ is nondiagonalizable, the bound (EV) cannot be
usefully applied.  Both the field of values and the pseudospectra
of $\BA$ are circular disks, as can be seen via a diagonal unitary 
similarity transformation~(see \cite[p.~268]{TE05} for details).  
If $n\ge2$ and $\beta\ge 2$, then $0\in\FOV(\BA)$,
and so for such values of $\beta$, 
(FOV) cannot give convergence.
Figure~\ref{Fpsa} shows the field of values and
pseudospectra for $\beta=5/2$ and $N=64$.
For small $\eps$, $\FOV(\BA)$ is evidently considerably larger than $\PSA(\BA)$,
and thus (PSA) predicts much faster convergence rates than (FOV) as $\eps\to 0$.


\begin{figure}[b!]  
\begin{center}
\includegraphics[scale=0.57]{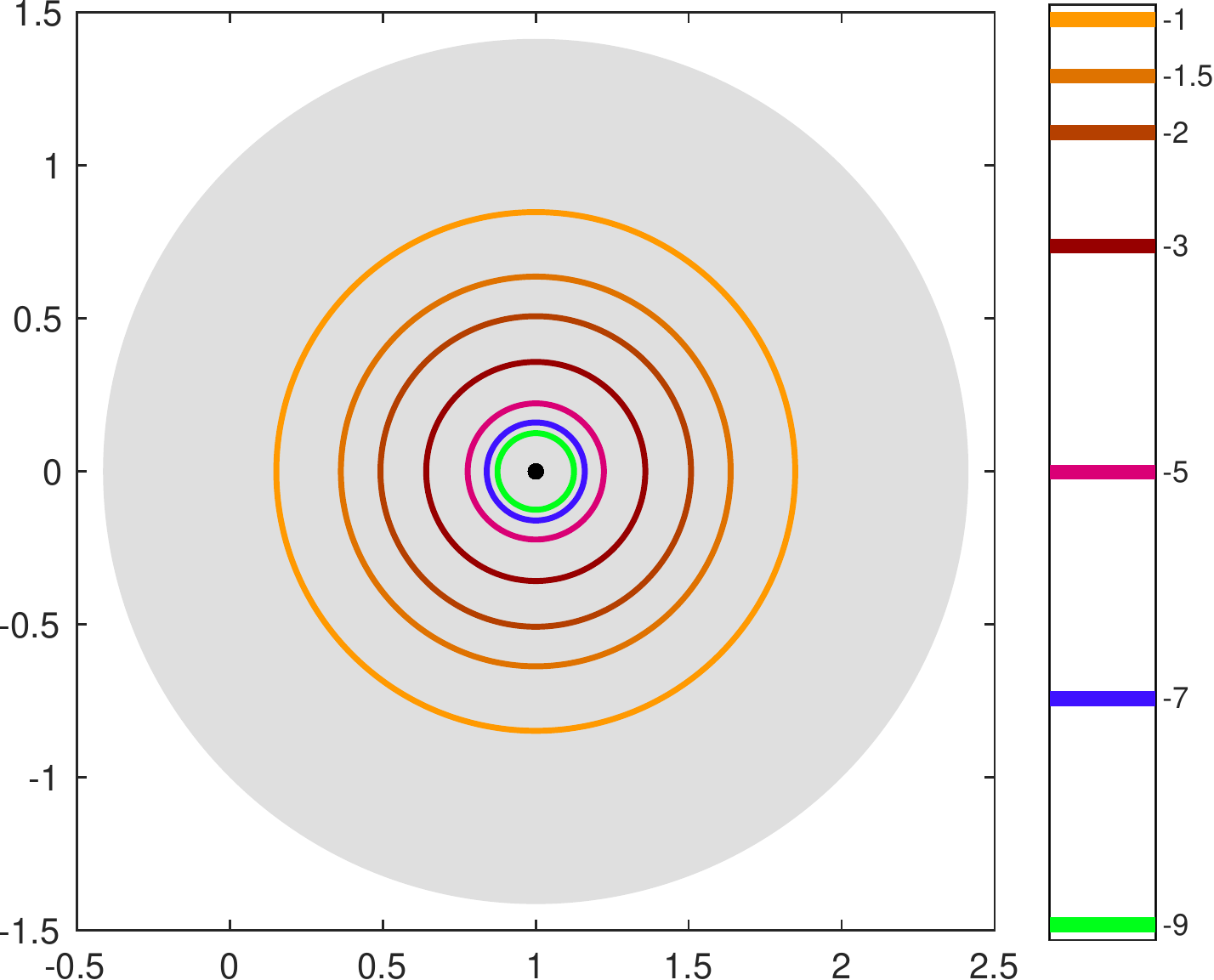}
\end{center}

\caption{The field of values $\FOV(\BA)$ (gray region) and $\eps$-pseudospectra $\PSA(\BA)$
(for $\eps=10^{-1}, 10^{-1.5}, 10^{-2}, 10^{-3}, 10^{-5}, 10^{-7}, 10^{-9}$)
for the matrix~(\ref{intmat})
with $\beta=5/2$ and $N=64$.  Notice that $0\in\FOV(\BA)$, but 
$0\not\in\PSA(\BA)$ for the pseudospectra shown here.
\label{Fpsa}}
\end{figure}


\begin{figure}[t!]  
\begin{center}
\vspace*{2em}
\includegraphics[scale=0.57]{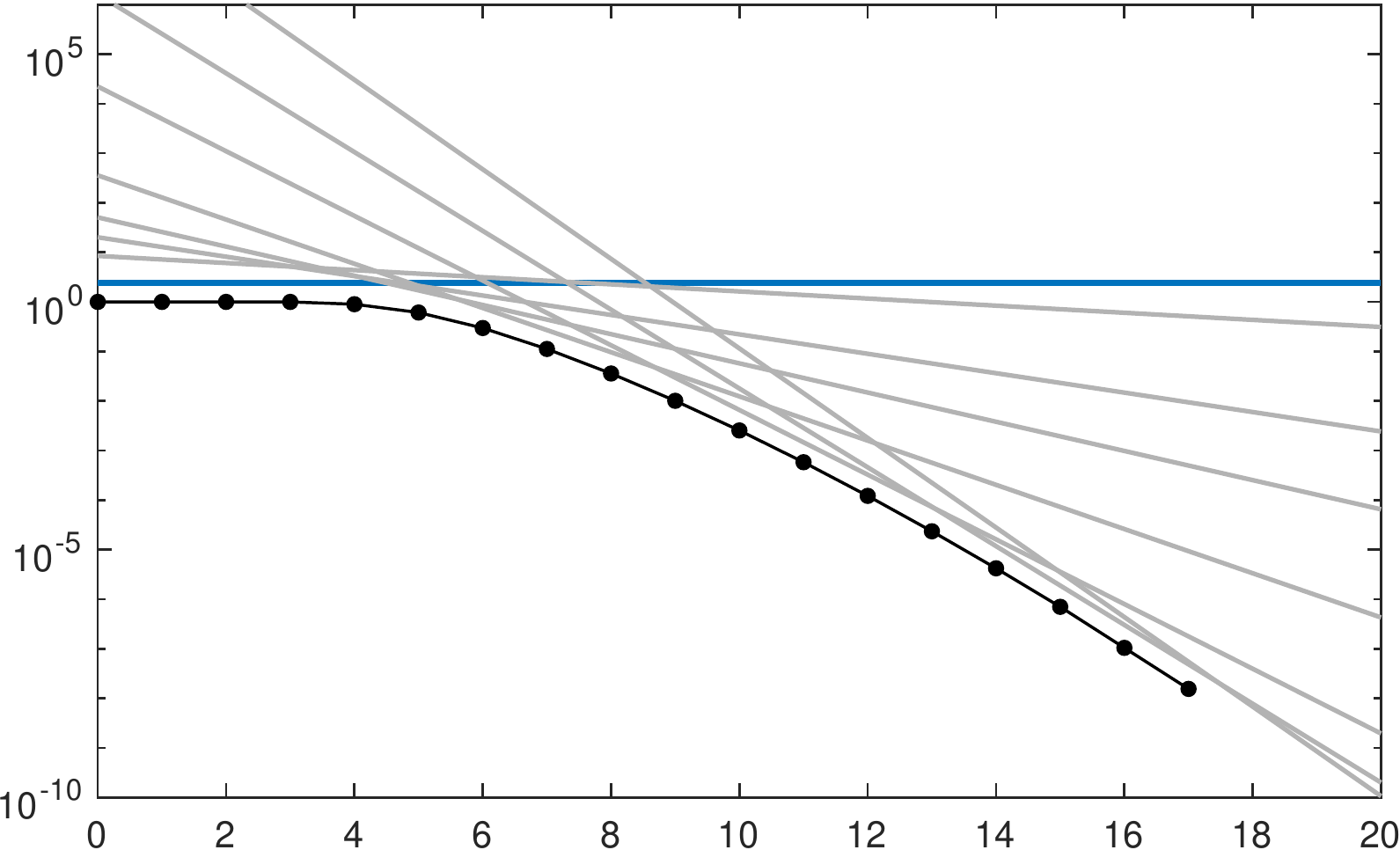}

\begin{picture}(0,0)
 \put(-180,127){\small $\displaystyle{\mingmres\! \norm{p(\BA)}}$}
 \put(-10,0){\small iteration, $k$}
 \put( 130,117){\small (FOV)}
 \put( 131,107){\small $\eps=10^{-1}$}
 \put( 131, 89){\small $\eps=10^{-1.5}$}
 \put( 131, 73){\small $\eps=10^{-2}$}
 \put( 131, 53){\small $\eps=10^{-3}$}
 \put( 131, 32){\small $\eps=10^{-5}$}
 \put(-114,172){\rotatebox{30}{\small $\eps=10^{-7}$}}
 \put(-90,172){\rotatebox{30}{\small $\eps=10^{-9}$}}
 \put(-25,94){\rotatebox{-25}{\small exact}}
\end{picture}
\end{center}

\caption{Convergence bounds for the integration matrix~(\ref{intmat}) 
         with $\alpha=5/2$ and $n=64$.  The bounds (PSA) correspond to 
         the same values of $\eps$ for which $\PSA(\BA)$ is shown in Figure~\ref{Fpsa}.}
\label{Fbounds}
\end{figure}

Figure~\ref{Fbounds} shows the corresponding GMRES bounds.
The ``exact'' curve was again computed using
the SDPT3 Toolbox~\cite{TTT99}.  
The pseudospectral bounds were obtained by numerically computing 
the radius of each pseudospectral boundary.  Increasing the
dimension $n$ does not significantly alter the pseudospectra
for the values of $\eps$ shown here.
(For one thing, such an extension would be a norm $\beta/n$ 
perturbation to the block diagonal matrix $\BA\oplus \BI$.)

As an alternative to (FOV), one might instead consider the
Crouzeix--Greenbaum bound~(CG)~\cite{CG19}.
Figure~\ref{fig:CGex2} shows the sets $\CG$
defined in~(\ref{eq:CG}) for 
the matrix~(\ref{intmat}) of dimension $N=64$.
The plot on the left uses $\beta=5/2$ (as in Figures~\ref{Fpsa} 
and~\ref{Fbounds}), giving a set $\CG$ in~(\ref{eq:CG}) that 
does not include the origin but surrounds it, 
and so the bound (CG) cannot give convergence.
The right plot shows $\CG$ for the smaller value $\beta=2$:
although $\BA$ is a nondiagonalizable matrix with just one 
Jordan block, the set $\CG$ excludes the origin and will 
yield a convergent bound with an asymptotic rate determined by
$\CG$.


\begin{figure}[t!]
\begin{center}
\includegraphics[width=2.25in]{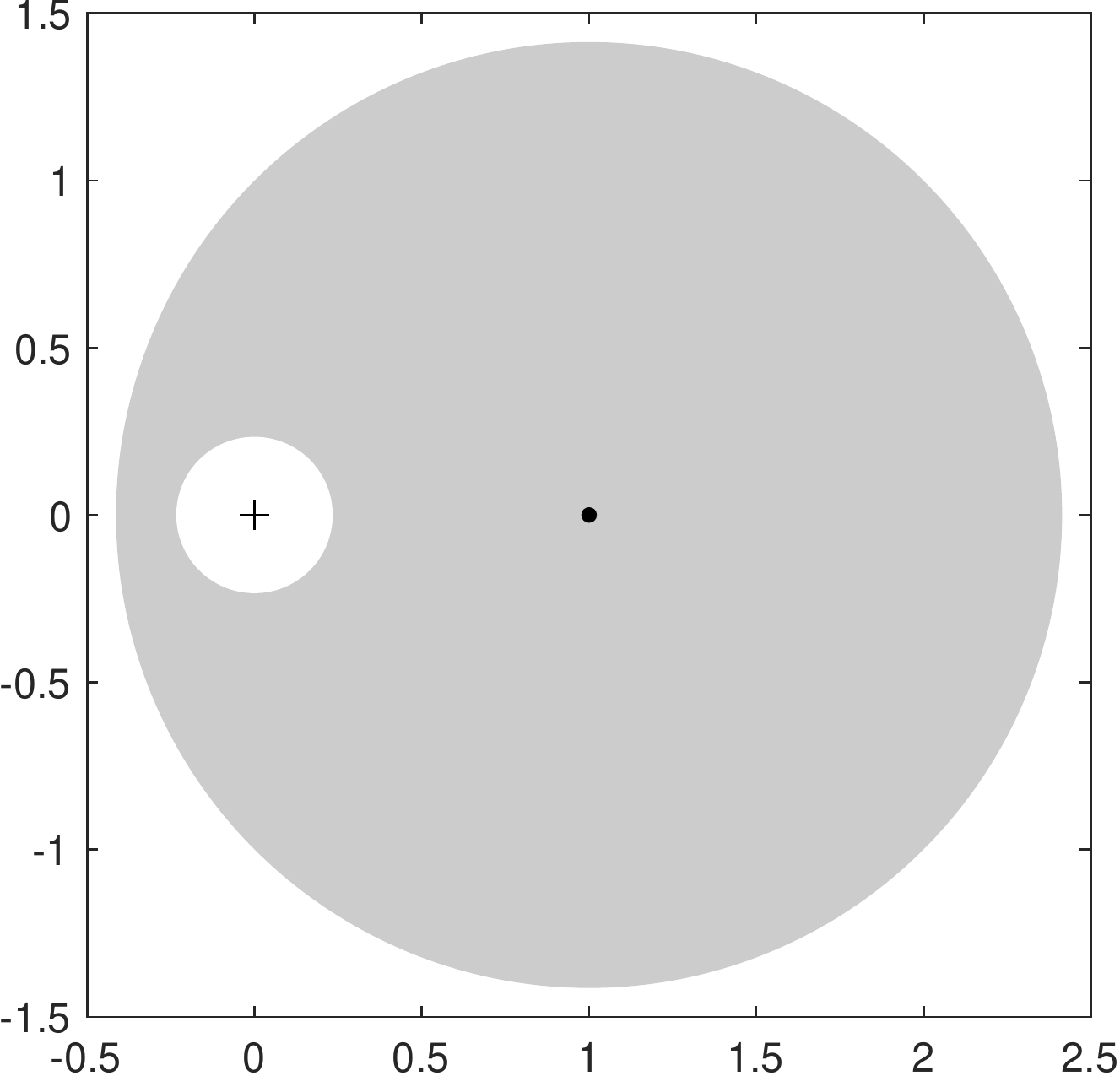}\qquad
\includegraphics[width=2.25in]{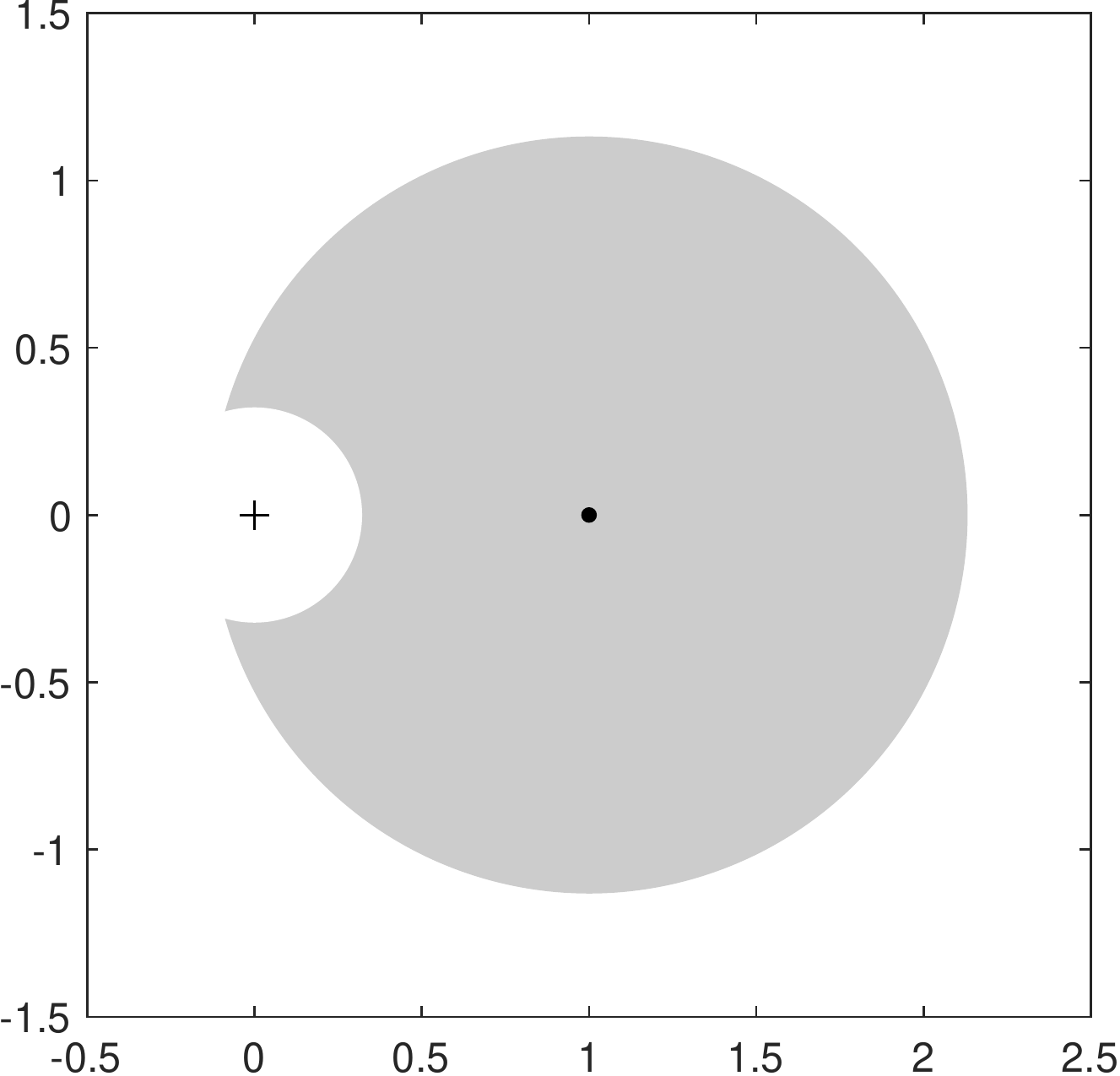}

\begin{picture}(0,0)
\put(-50,25){\small $\beta=5/2$}
\put(143,25){\small $\beta=2$}
\end{picture}
\end{center}

\vspace*{-10pt}
\caption{\label{fig:CGex2}
The Crouzeix--Greenbaum sets $\CG$ for matrix~$(\ref{intmat})$,
$N=64$.  For $\beta=5/2$ (left), $\CG$ surrounds the origin
(marked with $+$), and the bound~(CG) does not describe convergence.  
For $\beta=2$ (right), $\CG$ does not surround the origin, and so
(CG) will give a convergent bound.}
\end{figure}

\begin{figure}[b!]  
\begin{center}
\vspace*{2em}
\includegraphics[scale=0.57]{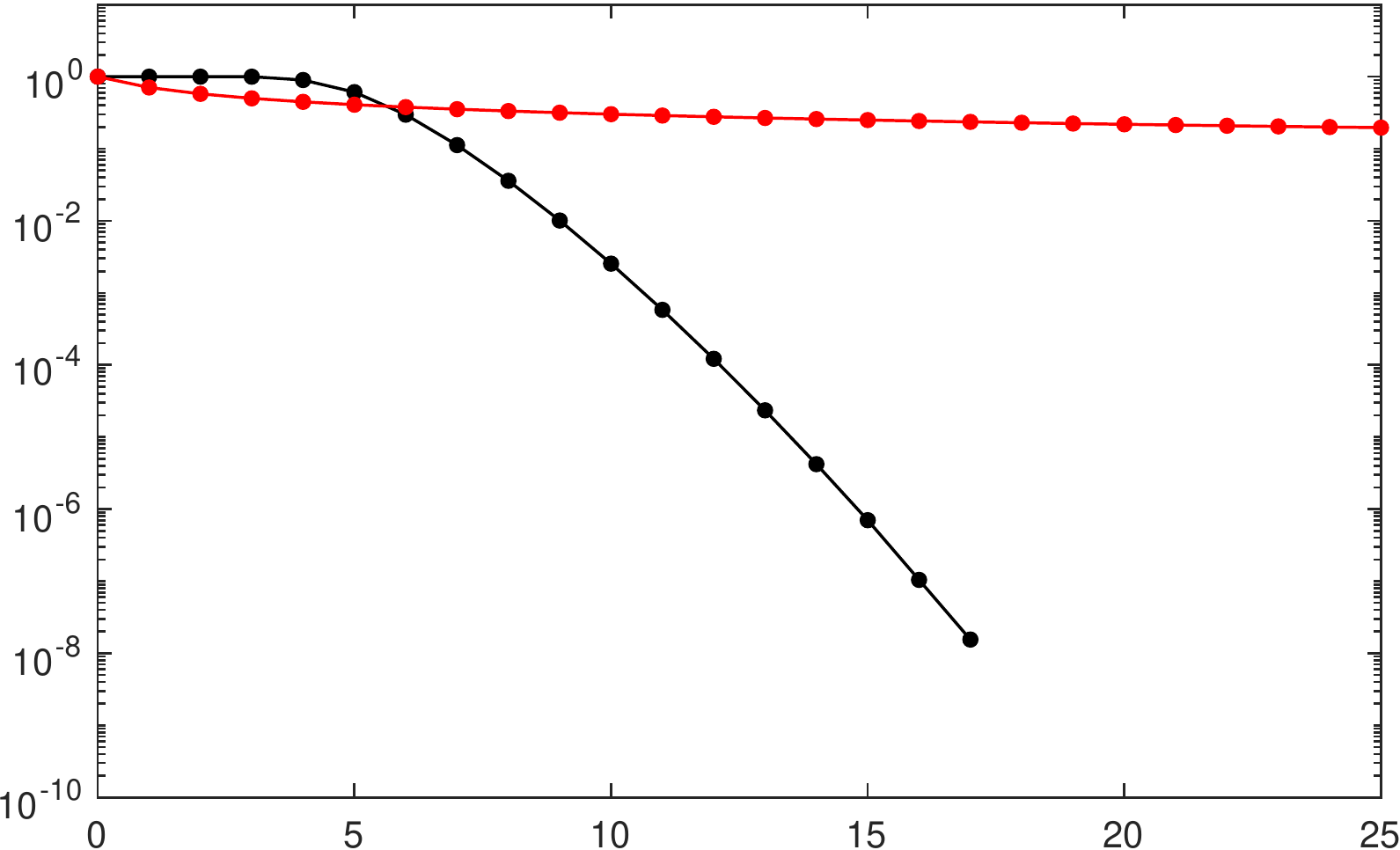}

\begin{picture}(0,0)
 \put(-10,0){\small iteration, $k$}
 \put(-23,61){\rotatebox{0}{\small $\displaystyle{\mingmres\! \norm{p(\BA)}}$}}
 \put(0,135){\rotatebox{0}{\small lower bound on $\displaystyle{\mingmres\! \norm{p(\BJ)}}$}}
\end{picture}
\end{center}

\caption{Comparison of the Ideal GMRES problems for the matrix $\BA$ in (\ref{intmat})
         and its Jordan factor $\BJ$.  In this case transforming to the Jordan form 
         results in a more difficult GMRES problem.}
\label{FexJ}
\end{figure}

When faced with a nondiagonalizable matrix,
one might naturally think of using the Jordan canonical form 
as the basis for GMRES analysis.
The matrix~(\ref{intmat}) provides a cautionary example. 
Suppose we take the Jordan form $\BA = \BX \BJ\BX^{-1}$,
and then follow the example of the eigenvalue--eigenvector
bound~(EV) to obtain
\begin{equation} \label{jordanbnd}
 \mingmres \norm{p(\BA)} \le \kappa(\BX) \mingmres \norm{p(\BJ)}.
\end{equation}
For the matrix~(\ref{intmat}) we compute
\begin{eqnarray*}
  \BA &=& \BX \BJ \BX^{-1} \\[5pt]
   &=& 
       \left[\begin{array}{ccccc}
          1 &  & & & \\
            & \textstyle{1\over \beta} &  & & \\
            &   & \textstyle{2\over \beta^2} &  & \\
            &   &   &  \ddots &  \\
            &   &   & & \textstyle{(n-1)! \over \beta^{n-1}}
          \end{array}\right] 
       \left[\begin{array}{ccccc}
          1 & 1 & && \\
            & 1 & 1 & & \\
            &   & 1 & \ddots & \\
            &   &   & \ddots & 1 \\
            &   &   & & 1
          \end{array}\right] 
       \left[\begin{array}{ccccc}
          1 &  & & & \\
            & \beta &  & & \\
            &   & \textstyle{\beta^2\over 2} &  & \\
            &   &   & \ddots  &  \\
            &   &   & & \textstyle{\beta^{n-1} \over (n-1)!}
          \end{array}\right]. 
\end{eqnarray*}
Note that $\kappa(\BX) \ge  (n-1)!/\beta^{n-1}$ grows \emph{factorially} with $n$.
The bound~(\ref{jordanbnd}) then requires analysis of $\norm{p(\BJ)}$,
which amounts to our Example~D in~(\ref{eq:exD}) with $\delta=1$:
we expect convergence at a slow rate.  
Indeed Ipsen's analysis~\cite{Ips00} 
(applying GMRES to the last column of the identity matrix) ensures that
\[ \mingmres \norm{p(\BJ)} \ge 1/\sqrt{k},\]
which does not capture the improving rate of convergence exhibited for this $\BA$;
see Figure~\ref{FexJ}.
While $0 \not \in \FOV(\BJ) = \{z\in\C: |1-z| \le \cos(\pi/(n+1)\}$~\cite[\S{1.3}]{gustrao},
the rate of convergence associated with $\FOV(\BJ)$ will be very slow.
For the values of $\eps$ used in Figure~\ref{Fpsa}, 
$\PSA(\BJ)$ will be much larger than $\PSA(\BA)$.
\emph{By transforming to the Jordan form, we have introduced an enormous constant
but arrived at an Ideal GMRES problem for $\BJ$ that converges more slowly than 
the Ideal GMRES problem for the original $\BA$.}

\exhead{No Example: Only (PSA) not descriptive} 

This scenario would require GMRES to (eventually) converge steadily, 
and for this convergence to be captured by (EV) and (FOV) but not (PSA).\ \ 
If GMRES initially stagnates, then $\rfov$ must be close to~1 
(since $\cfov = 1+\sqrt{2}$ is small), and so (FOV) could only predict slow
overall convergence.  Thus, the example we seek could not exhibit initial
stagnation.  In this case, if (EV) is to be accurate, then $\cev = \COND(\BV)$ 
must be small: implying that $\BA$ must be nearly normal.
Theorems~\ref{bauerfike}
and~\ref{psafov} then insure that $\PSA(\BA)$ cannot be much larger
than $\SPEC(\BA)$ and $\FOV(\BA)$, so it is impossible
to get an example where (PSA) gives a significantly slower asymptotic 
convergence rates than (EV) and (FOV).\ \ (One could take $\SPEC(\BA)$ 
to contain an eigenvalue very close to the origin, requiring one to take
$\eps$ very small to ensure $0\not\in\PSA(\BA)$, but this effect is limited
by the fact that $\BA$ must be close to normal.  Moreover, if $\BA$ has other
eigenvalues much farther from the origin, the small eigenvalue will cause 
GMRES to exhibit initial stagnation, and (FOV) will not capture the eventual convergence.)

\subsection{Summary of the Examples}

Let us collect some of the points highlighted in these examples.

\begin{enumerate}
\item[{\rm (EV)}] 
This bound works well for normal matrices and for~(\ref{fold}),
where all eigenvalues were uniformly ill-conditioned,
but fails when the matrix was nondiagonalizable;
it can also fail to be descriptive when $\kappa(\BV)$ is 
large primarily because of a small number of ill-conditioned eigenvalues.
(See the example in Section~\ref{sec:convdiff}.)

\item[{\rm (FOV)}] 
This bound performs well when only one convergence stage was
observed, as in Examples~A and~D.\ \  Its primary advantages 
over (PSA) for these examples was sharpness 
(Figures~\ref{fig:Abounds} and~\ref{Dbounds}) and ease of computability.
When GMRES exhibits an initial period of transient stagnation,
(FOV) fails to capture the eventual convergence, as in Examples~B, C, E, and~F.
\item[{\rm (PSA)}]
This bound inherits properties of both (EV) and (FOV), but can also
capture interesting information between these extremes,
as seen in Example~F.\ \   The primary flaw in (PSA), 
exploited in Examples~B and~C, is its
inability to recognize that the spectrum is a discrete point set, 
and thus it tends to overestimate the influence of outlying eigenvalues
that GMRES can effectively eliminate at an early stage of convergence.  
The bounds (PSA$'$) and (PSA$''$) suggest a way to address this shortcoming.
(That said, pseudospectral techniques are not a panacea for bounding 
the 2-norm of matrix polynomials.  For some extreme examples of their
limitations, see~\cite{GT93,RR11}.)
\end{enumerate}

\medskip
In the next section, we illustrate
how pseudospectra can yield convergence estimates during an iteration,
and apply the bounds surveyed here to a matrix
derived from a convection-diffusion problem.

\section{Adaptive Pseudospectral Bounds} \label{sec:adaptive}

The pseudospectral bound (PSA) often provides a good indication 
of GMRES convergence, especially when considering a collection
of bounds based on a wide range of $\eps$ values.
Pseudospectra can be expensive to compute for large $\BA$;
however, one can use elements from early GMRES iterations to
\emph{approximate} the pseudospectra of $\BA$, and hence obtain
an \emph{estimate} for how the convergence will proceed.
Such insight could, for example, give some indication of 
when to restart the GMRES algorithm.
(An entirely different method for predicting future convergence
based on early iterations has been suggested by Liesen~\cite{Lie00}.)

Suppose we have taken $k$ steps of GMRES.\ \ 
The implementation of Saad and Schultz~\cite{ss86} uses 
the Arnoldi process to build an orthonormal basis
$\{\Bv_1, \ldots, \Bv_{k+1}\}$ 
for the Krylov subspace 
$\KRY_{k+1}(\BA, \Br_0) := \SPAN \{\Br_0, \BA\Br_0, \ldots, \BAs^k\Br_0\}$.
Organize the basis vectors into 
$\BV_k = [\Bv_1\  \cdots\ \Bv_k]\in\C^{n\times k}$
and $\BV_{k+1} = [\BV_k\ \Bv_{k+1}] \in\C^{n\times (k+1)}$.
The Arnoldi process gives a partial upper Hessenberg decomposition
of $\BA$, 
\begin{equation} \label{arnoldi}
 \BA\BV_k = \BV_{k+1} \BHtil_k \qquad \mbox{and}\qquad
   \BV_k^*\BA\BV_k = \BH_k,
\end{equation}
where $\BHtil_k\in\C^{(k+1)\times k}$ is upper Hessenberg and 
$\BH_k\in\C^{k\times k}$ consists of the first $k$ rows
of $\BHtil_k$; see, e.g.,~\cite[\S6.3]{Saa03}.  We can take
the subdiagonal entries of $\BHtil_k$ to be nonnegative.

Toh and Trefethen~\cite{tt96} show that the pseudospectra of 
$\BH_k$ and $\BHtil_k$ can potentially yield good approximations 
to those of $\PSA(\BA)$ even when $k\ll n$.  
A point $z\in\C$ is in the $\eps$-pseudospectrum of 
the $(k+1)\times k$ rectangular matrix $\BHtil_k$, 
$z\in\PSA(\BHtil_k)$, 
provided $s_{\rm min}(z\BItil_k-\BHtil_k) < \eps$, where $\BItil_k$ 
is the $k\times k$ identity matrix augmented by a row of zeros and
$s_{\rm min}(\cdot)$ denotes the smallest singular value; 
for details, see~\cite{tt96,WT02}.
With this definition, $\PSA(\BH_k)$,
$\PSA(\BHtil_k)$, and $\PSA(\BA)$ are related as follows.

\medskip
\begin{theorem} \label{ritzpsa}
Suppose $\BV_k\adj\BA\BV_k = \BH_k$ and $\BA\BV_k = \BV_{k+1}\BHtil_{k}$.
Then
\begin{eqnarray*}
&(i)&\qquad \PSA(\BHtil_1)\subseteq \PSA(\BHtil_2) \subseteq\cdots\subseteq
      \PSA(\BHtil_{n-1}) \subseteq \PSA(\BH_n) = \PSA(\BA). \\
&(ii)&\qquad \PSA(\BH_k) \subseteq \sigma_\sepshat(\BHtil_k)
                        \subseteq \sigma_\sepshat(\BA),
      \mbox{where $\widehat{\eps} := \eps+h_{k+1,k}$}.
\end{eqnarray*}
\end{theorem}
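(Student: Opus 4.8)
The plan is to reduce the statement to the singular-value characterizations $z\in\PSA(\BHtil_k)$ iff $s_{\min}(z\BItil_k-\BHtil_k)<\eps$ and $z\in\PSA(\BH_k)$ iff $s_{\min}(z\BI_k-\BH_k)<\eps$, together with two elementary facts: deleting a column of a matrix cannot decrease its smallest singular value (singular-value interlacing), and Weyl's perturbation bound $|s_{\min}(M+E)-s_{\min}(M)|\le\norm{E}$. The structural input is the Arnoldi nesting $\BA\BV_k=\BV_{k+1}\BHtil_k$, i.e.\ $\BHtil_k=\BV_{k+1}\adj\BA\BV_k$: since $\BHtil_{k+1}$ is upper Hessenberg, column $j$ of $\BHtil_{k+1}$ vanishes below row $j+1$, so deleting the last column of $z\BItil_{k+1}-\BHtil_{k+1}$ leaves a matrix whose last row is zero, and removing that zero row recovers exactly $z\BItil_k-\BHtil_k$. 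Similarly, $z\BItil_{n-1}-\BHtil_{n-1}$ is precisely $z\BI_n-\BH_n$ with its last column deleted, because $\BHtil_{n-1}$ is the first $n-1$ columns of $\BH_n=\BV_n\adj\BA\BV_n$, and in general $\BH_k$ is the top $k\times k$ block of $\BHtil_k$.

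For part (i), I would first prove the nesting $\PSA(\BHtil_k)\subseteq\PSA(\BHtil_{k+1})$. By the structural identity above, deleting a column and then a zero row (which does not change singular values) passes from $z\BItil_{k+1}-\BHtil_{k+1}$ to $z\BItil_k-\BHtil_k$, so $s_{\min}(z\BItil_{k+1}-\BHtil_{k+1})\le s_{\min}(z\BItil_k-\BHtil_k)$; hence $z\in\PSA(\BHtil_k)$ forces $z\in\PSA(\BHtil_{k+1})$. The same column-deletion inequality applied to $z\BI_n-\BH_n$ and its first $n-1$ columns gives $s_{\min}(z\BI_n-\BH_n)\le s_{\min}(z\BItil_{n-1}-\BHtil_{n-1})$, that is $\PSA(\BHtil_{n-1})\subseteq\PSA(\BH_n)$. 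Finally, assuming the Arnoldi process does not break down before step $n$ (otherwise $\Br_k$ has already reached $\Bzero$ and GMRES has terminated), $\BV_n$ is unitary and $\BH_n=\BV_n\adj\BA\BV_n$, so $\norm{(z\BI-\BH_n)^{-1}}=\norm{(z\BI-\BA)^{-1}}$ for every $z$, and hence $\PSA(\BH_n)=\PSA(\BA)$ for all $\eps$. Chaining these inclusions proves (i).

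For part (ii), I would write $z\BItil_k-\BHtil_k$ as the $(k+1)\times k$ matrix whose top $k$ rows are $z\BI_k-\BH_k$ and whose last row is $-h_{k+1,k}\,\Be_k\adj$. Thus $z\BItil_k-\BHtil_k$ is a perturbation, of spectral norm $h_{k+1,k}$, of the matrix $z\BI_k-\BH_k$ padded with a zero row (whose smallest singular value equals that of $z\BI_k-\BH_k$), so Weyl's bound gives
\[ s_{\min}(z\BItil_k-\BHtil_k)\ \le\ s_{\min}(z\BI_k-\BH_k)+h_{k+1,k}. \]
Therefore $z\in\PSA(\BH_k)$, i.e.\ $s_{\min}(z\BI_k-\BH_k)<\eps$, forces $s_{\min}(z\BItil_k-\BHtil_k)<\eps+h_{k+1,k}=\widehat{\eps}$, which is $z\in\sigma_{\widehat{\eps}}(\BHtil_k)$; this is the first inclusion. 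The second inclusion $\sigma_{\widehat{\eps}}(\BHtil_k)\subseteq\sigma_{\widehat{\eps}}(\BA)$ is not new: it is exactly the tail of the chain established in part (i), read with the (fixed, positive) level $\eps$ replaced throughout by $\widehat{\eps}$.

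I expect the only real obstacle to be bookkeeping: confirming from the upper Hessenberg structure which submatrix of $z\BItil_{k+1}-\BHtil_{k+1}$ (respectively $z\BI_n-\BH_n$) really equals $z\BItil_k-\BHtil_k$ (respectively $z\BItil_{n-1}-\BHtil_{n-1}$), keeping straight the distinct roles of $\BItil_k$, $\BI_k$, $\BHtil_k$, and $\BH_k$, and disposing of the degenerate early-breakdown case. Once the structural identifications are in place, parts (i) and (ii) are each just the two one-line facts — column deletion and Weyl's inequality — applied a few times, plus unitary invariance of the resolvent norm.
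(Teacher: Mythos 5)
Your proof is correct and follows essentially the same route as the paper's: part (i) rests on the same column-deletion inequality $s_{\min}(z\BItil_k-\BHtil_k)\ge s_{\min}(z\BItil_{k+1}-\BHtil_{k+1})$ (which the paper attributes to Toh and Trefethen) plus unitary invariance for $\PSA(\BH_n)=\PSA(\BA)$, and part (ii) uses the same perturbation bound $s_{\min}(z\BItil_k-\BHtil_k)\le s_{\min}(z\BI-\BH_k)+h_{k+1,k}$ (the paper cites Horn and Johnson where you invoke Weyl's inequality directly) followed by part (i) at level $\widehat{\eps}$. You simply spell out the structural bookkeeping that the paper delegates to references.
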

\vspace*{-1.25em}
\begin{proof}
Toh and Trefethen proved  part~(i), which
follows immediately from noting that 
$s_{\rm min}(z\BItil_k-\BHtil_k) \ge s_{\rm min}(z\BItil_{k+1}-\BHtil_{k+1})$.\ \ 
For part~(ii), suppose that $z\in\PSA(\BH_k)$.  Observe that
$s_{\rm min}(z\BItil_k-\BHtil_k) \le s_{\rm min}(z\BI-\BH_k)  +  h_{k+1,k} 
                            <  \eps + h_{k+1,k},$
where the first inequality follows from~\cite[Thm.~3.3.16]{hojo2}.
Applying part~$(i)$ to this bound completes the proof.\qquad
\end{proof}

Since the pseudospectra of $\BH_k$ and $\BHtil_k$ approximate those of 
$\BA$, it is natural to approximate the bound (PSA) by replacing
$\PSA(\BA)$ by $\PSA(\BH_k)$ or $\PSA(\BHtil_k)$.  
The resulting expressions are no longer convergence bounds, but only estimates
(as we indicate with the ``$\lesssim$'' symbol):
\begin{eqnarray}
 \frac{\norm{\Br_k}}{\norm{\Br_0}} 
    &\lesssim&
       \frac{{\cal L}(\Gamma^{(k)}_\eps)}{2\pi\eps} 
          \mingmres \max_{z\in\PSA(\BH_k)} |p(z)|,  \label{eq:approxpsa1}\\
 \frac{\norm{\Br_k}}{\norm{\Br_0}} 
    &\lesssim&
       \frac{{\cal L}(\widetilde{\Gamma}^{(k)}_\eps)}{2\pi\eps} 
         \mingmres \max_{z\in\PSA(\BHtil_k)} |p(z)|, \label{eq:approxpsa2}
\end{eqnarray}
where $\Gamma^{(k)}_\eps$ and $\widetilde{\Gamma}^{(k)}_\eps$ 
denote Jordan curves enclosing $\PSA(\BH_k)$ and $\PSA(\BHtil_k)$.

What value of $\eps$ is relevant at a specific iteration?
The following bounds, while not necessarily sharp,
suggest one way to approach this question.
This proposition gives pseudospectral interpretations
(cf.~\cite[Lemma~2.1]{SG98}) of results about Ritz and
Harmonic values.  For the result about Ritz values, see, 
e.g., \cite[\S4.6]{arpack}).
The result about harmonic Ritz values follows from
Simonicini and Gallopoulos~\cite{SG96}; 
see also Goosens and Roose~\cite{gr99}.

\begin{proposition}
For $k<n$, the eigenvalues of 
$\BH_k$ (Ritz values) are contained in the $\eps$-pseudospectrum
of $\BA$ for $\eps=h_{k+1,k}$.
If $\BH_k$ is nonsingular, the roots of the GMRES residual 
polynomial (harmonic Ritz values~\cite{freund92,gr99}) are 
contained in the $\eps$-pseudospectrum of $\BA$ 
for $\eps=h_{k+1,k} + h_{k+1,k}^2/s_{\rm min}(\BH_k)$.
\end{proposition}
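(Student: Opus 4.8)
\noindent
The statement converts two standard algebraic facts -- that the Ritz values are the eigenvalues of $\BH_k$, and that (when $\BH_k$ is nonsingular) the harmonic Ritz values, i.e.\ the roots of the GMRES residual polynomial, are the eigenvalues of $\BH_k + h_{k+1,k}^2(\BH_k\adj)^{-1}\Be_k\Be_k\adj$ -- into quantitative statements about how far such approximate eigenvalues can lie from $\SPEC(\BA)$. In both cases the mechanism is the Arnoldi identity obtained by writing out the first relation in~(\ref{arnoldi}), namely $\BA\BV_k = \BV_k\BH_k + h_{k+1,k}\Bv_{k+1}\Be_k\adj$ (available for $k<n$), which exhibits $\BV_k y$ as an approximate eigenvector of $\BA$ with residual controlled by $h_{k+1,k}$. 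The plan is to produce, for each such $\theta$, a unit vector $\Bx$ with $\norm{(\BA-\theta\BI)\Bx}$ no larger than the claimed $\eps$, and then to conclude that $\theta$ lies in the $\eps$-pseudospectrum: if $\theta\notin\SPEC(\BA)$ then $1=\norm\Bx\le\norm{(\theta\BI-\BA)^{-1}}\,\norm{(\theta\BI-\BA)\Bx}$ forces $\norm{(\theta\BI-\BA)^{-1}}\ge1/\eps$, and if $\theta\in\SPEC(\BA)$ the conclusion is trivial. Equivalently, $\theta$ is an exact eigenvalue of $\BA-\BE$ with $\BE := (\BA-\theta\BI)\Bx\Bx\adj$, and $\norm\BE = \norm{(\BA-\theta\BI)\Bx}\le\eps$. (If the resolvent norm equals $1/\eps$ exactly, $\theta$ lies in the $\eps'$-pseudospectrum for every $\eps'>\eps$; this harmless boundary caveat I would simply note.)

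For the Ritz values, I would take $\BH_k y=\theta y$ with $\norm y=1$ and set $\Bx:=\BV_k y$, so $\norm\Bx=1$ because the columns of $\BV_k$ are orthonormal. Right-multiplying the Arnoldi identity by $y$ gives $(\BA-\theta\BI)\Bx=h_{k+1,k}(\Be_k\adj y)\Bv_{k+1}$, and since $|\Be_k\adj y|\le\norm y=1$ and $\norm{\Bv_{k+1}}=1$ we get $\norm{(\BA-\theta\BI)\Bx}\le h_{k+1,k}$; the concluding step above then places $\theta$ in $\PSA(\BA)$ with $\eps=h_{k+1,k}$.

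For the harmonic Ritz values I would first record the algebraic characterization. A harmonic Ritz value $\theta$ with vector $w=\BV_k y\ne0$ is defined by $\BA w-\theta w\perp\RANGE(\BA\BV_k)$. Using $\BA\BV_k=\BV_{k+1}\BHtil_k$, $\BV_{k+1}\adj\BV_{k+1}=\BI$, and the fact that $\BV_{k+1}\adj\BV_k$ is the $k\times k$ identity with an appended zero row (so that $\BHtil_k\adj\BV_{k+1}\adj\BV_k=\BH_k\adj$), this orthogonality becomes $\BHtil_k\adj\BHtil_k\,y=\theta\BH_k\adj y$; since $\BHtil_k\adj\BHtil_k=\BH_k\adj\BH_k+h_{k+1,k}^2\Be_k\Be_k\adj$ and $\BH_k$ is nonsingular, left-multiplying by $(\BH_k\adj)^{-1}$ yields $(\BH_k+h_{k+1,k}^2(\BH_k\adj)^{-1}\Be_k\Be_k\adj)\,y=\theta y$, as in~\cite{SG96}. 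Now set $\Bx:=\BV_k y$ with $\norm y=1$, substitute $\BH_k y=\theta y-h_{k+1,k}^2(\Be_k\adj y)(\BH_k\adj)^{-1}\Be_k$ into the Arnoldi identity times $y$, and simplify to
\[ (\BA-\theta\BI)\Bx=h_{k+1,k}(\Be_k\adj y)\big(\Bv_{k+1}-h_{k+1,k}\BV_k(\BH_k\adj)^{-1}\Be_k\big). \]
Since $\Bv_{k+1}$ is orthogonal to $\RANGE(\BV_k)$ and $\norm{\Bv_{k+1}}=1$, Pythagoras gives $\norm{\Bv_{k+1}-h_{k+1,k}\BV_k(\BH_k\adj)^{-1}\Be_k}^2 = 1+h_{k+1,k}^2\norm{(\BH_k\adj)^{-1}\Be_k}^2 \le 1+h_{k+1,k}^2/s_{\rm min}(\BH_k)^2$, using $\norm{(\BH_k\adj)^{-1}}=\norm{\BH_k^{-1}}=1/s_{\rm min}(\BH_k)$. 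Combining with $|\Be_k\adj y|\le1$ and the elementary bound $\sqrt{1+t^2}\le1+t$ for $t=h_{k+1,k}/s_{\rm min}(\BH_k)\ge0$ yields $\norm{(\BA-\theta\BI)\Bx}\le h_{k+1,k}+h_{k+1,k}^2/s_{\rm min}(\BH_k)$, and the concluding step finishes the proof. The only genuine bookkeeping -- and the step I expect to need the most care -- is this harmonic Ritz characterization, getting $\BHtil_k\adj\BHtil_k\,y=\theta\BH_k\adj y$ and the identity for $\BV_{k+1}\adj\BV_k$ exactly right; everything after that is routine norm estimation, and the only subtlety is the strict-versus-nonstrict inequality in the definition of $\PSA(\BA)$, handled by the rank-one-perturbation reformulation.
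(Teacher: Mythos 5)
Your proof is correct, and it reaches the paper's two perturbation sizes by the same underlying mechanism (the Arnoldi relation $\BA\BV_k=\BV_k\BH_k+h_{k+1,k}\Bv_{k+1}\Be_k\adj$ plus the eigenvalue formulations of the Ritz and harmonic Ritz values), but it packages the argument differently. The paper constructs a \emph{single} perturbation $\BE$ of the claimed norm under which the whole subspace $\RANGE(\BV_k)$ becomes invariant for $\BA+\BE$, so that all Ritz (resp.\ harmonic Ritz) values land simultaneously in $\SPEC(\BA+\BE)\subset\PSA(\BA)$; you instead work one eigenvalue $\theta$ at a time, exhibiting a unit approximate eigenvector $\Bx=\BV_k y$ with $\norm{(\BA-\theta\BI)\Bx}\le\eps$ and invoking the resolvent (equivalently, rank-one backward-error) characterization of $\PSA(\BA)$. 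The two viewpoints are the standard equivalent definitions of the pseudospectrum, so the content is the same; your version additionally derives the eigenvalue characterization of the harmonic Ritz values from the Petrov--Galerkin condition (which the paper simply cites from Simoncini--Gallopoulos and Goossens--Roose), and your Pythagoras estimate $\sqrt{1+t^2}\le 1+t$ is marginally sharper than the paper's triangle-inequality bound before the final relaxation to the stated $\eps$. Your explicit note about the strict inequality in the definition of $\PSA(\BA)$ when $\norm{\BE}$ equals $\eps$ exactly is a boundary caveat the paper glosses over; it affects both arguments equally and is harmless.
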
 
\begin{proof}
To prove the first part, form the perturbation
$\BE := -h_{k+1,k}\Bv_{k+1}\Be_k^*\BV_k^*$.  
Then, using~(\ref{arnoldi}) and the fact that $\BV_k^*\BV_k^{} = \BI$,
\begin{eqnarray*}
(\BA+\BE)\BV_k &=& \BA\BV_k -  h_{k+1,k}\Bv_{k+1}\Be_k^*\BV_k^*\BV_k^{} \\
               &=& (\BV_k \BH_k + h_{k+1,k}\Bv_{k+1}\Be_k^*) - h_{k+1,k}\Bv_{k+1}\Be_k^* 
               = \BV_k\BH_k,
\end{eqnarray*}
where $\Be_k\in\C^k$ is the $k$th column of the $k\times k$ identity.
Since $(\BA+\BE)\BV_k = \BV_k\BH_k$, 
$\RANGE(\BV_k)$ is an invariant subspace of $\BA+\BE$, and
thus $\SPEC(\BH_k)\subseteq \SPEC(\BA+\BE) \subset \PSA(\BA)$,
where $\eps := h_{k+1,k} = \norm{\BE}$.

The harmonic Ritz values are the eigenvalues of $(\BH_k + h_{k+1,k}^2\Bf_k^{}\Be_k^*)$, 
where $\Bf_k := \BH_k^{-*}\Be_k$~\cite{gr99,SG96}.
Defining the perturbation 
$\BE:=(h_{k+1,k}^2\BV_{\kern-1.5pt k}^{} \Bf_k^{}\Be_k^* - h_{k+1,k}^{}\Bv_{k+1}^{}\Be_k^*)\BV_k^*$,
from~(\ref{arnoldi}) it follows that
\[ (\BA+\BE)\BV_k = \BV_k(\BH_k+h_{k+1,k}^2\Bf_k^{}\Be_k^*),\]
and thus $\SPEC(\BH_k+h_{k+1,k}^2\Bf_k^{}\Be_k^*) \subseteq
         \SPEC(\BA+\BE)\subset\PSA(\BA)$,
where $\norm{\BE} \le \eps 
       := h_{k+1,k} + h_{k+1,k}^2/s_{\rm min}(\BH_k)$.\qquad
\end{proof}

\medskip

GMRES convergence \emph{estimates} based on $\PSA(\BH_k)$ 
have several applications.  
If $k$ is small, the estimate might hint at GMRES behavior
at future iterations.  If $k$ is larger (e.g., the $k$
that satisfies the GMRES convergence criterion) and
$\PSA(\BH_k)\approx \PSA(\BA)$ over a range of $\eps$ values,
one could estimate the upper bound on GMRES convergence
that might inform future runs of GMRES with the same $\BA$.\ \ 
Note that $\PSA(\BH_k)$ depends on the initial residual $\Br_0$.
If $\Br_0$ is deficient in all eigenvector directions associated 
with a particular eigenvalue, that eigenvalue cannot influence 
$\BH_k$ nor the GMRES estimates derived from it.
If $\Br_0$ only has a small component in a certain eigenvector
direction, that component may not exert much influence on early 
iterations (and $\BH_k$ for small $k$), but become significant at later
iterations.

Toh and Trefethen
observe qualitative links between the pseudospectra and the
GMRES iteration polynomial~\cite{toh96,tt99}.
A deeper quantitative understanding
of this relationship could give insight about
the ability of pseudospectral bounds to describe GMRES
convergence, the value of $\eps$ for which $\PSA(\BA)$ 
gives the best bound at the $k$th iteration,
and the merits of adaptive strategies, like the one described 
here, to capture significant features of convergence behavior.

Figure~\ref{adapt:int:bounds} shows adaptive convergence
estimates drawn from the ``integration matrix'' (\ref{intmat}),
again with $n=64$ and $\beta=5/2$, based on the pseudospectra of $\BH_k$ 
for $k=4$ and $k=12$ for a specific choice of initial residual, 
shown in Figure~\ref{fig:psaint}.
In Figure~\ref{Fbounds}, we saw that the bound 
(PSA) was descriptive for this example, and thus hope these 
Arnoldi estimates would perform similarly well.  
The estimates shown here are based on 
a random initial residual with entries drawn from the standard 
normal distribution.  Taking estimates at iteration $k=4$
gives some hint of the quick convergence that follows; when $k=12$,
the pseudospectra of $\BH_k$ match those of $\BA$ for relevant
values of $\eps$ and characterize the worst case convergence curve.
The curve labeled $\norm{\Br_k}/\norm{\Br_0}$ is the actual GMRES
convergence obtained for this particular initial residual, with an 
asterisk marking the iteration from which the convergence estimate 
was drawn.

\begin{figure}[t!]
\begin{center}
\includegraphics[scale=0.37]{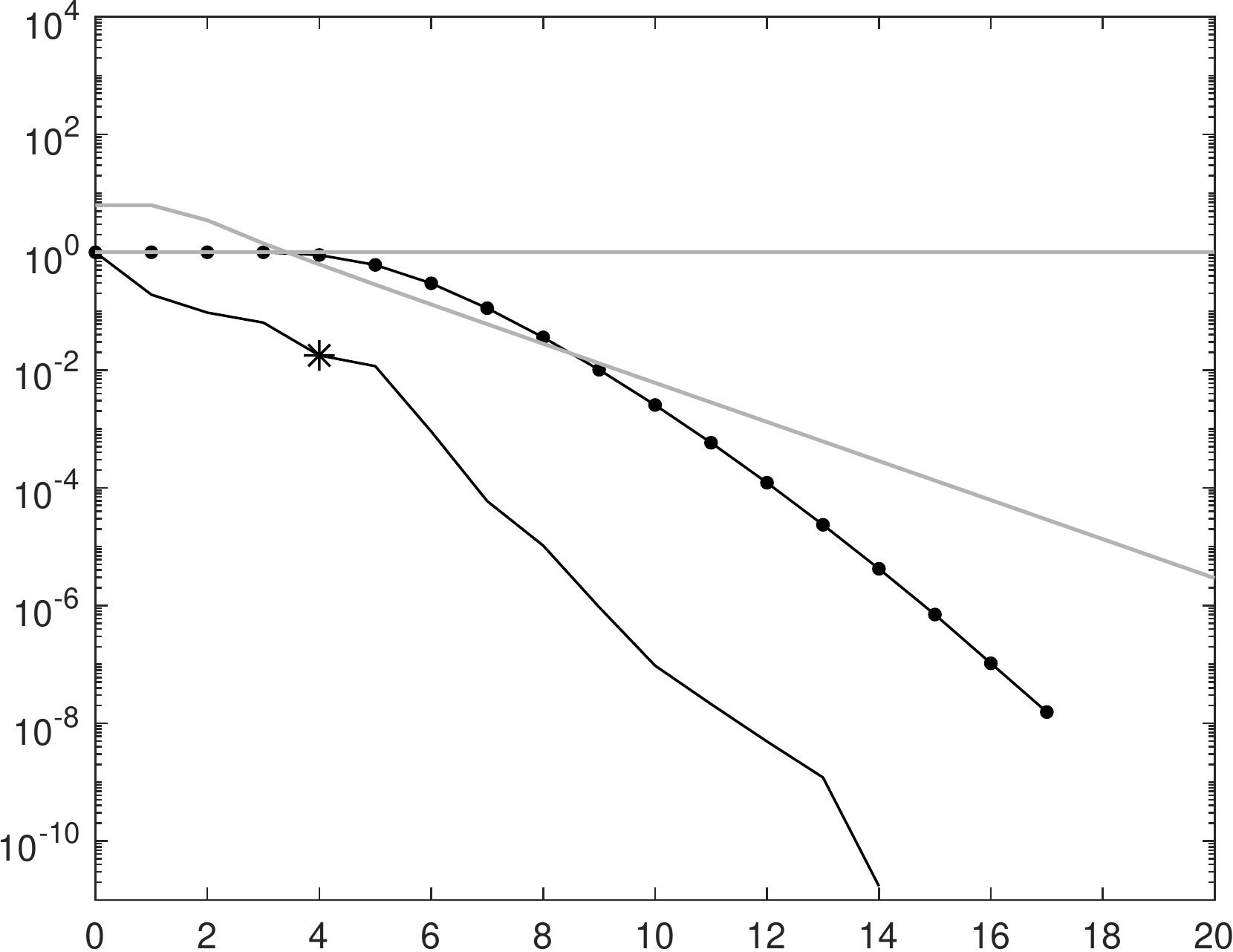}
\begin{picture}(0,0)
\put(-110,30){\footnotesize $\displaystyle{\|\Br_k\|_2\over \|\Br_0\|_2}$}
\put(-50,18){\footnotesize \shortstack{Ideal\\ GMRES}}
\put(-42,102.5){\footnotesize $\eps\to\infty$}
\put(-42,68.5){\footnotesize \rotatebox{-20}{$\eps=10^{-1}$}}
\put(-32,-8){\footnotesize $k$}
\end{picture}
\includegraphics[scale=0.37]{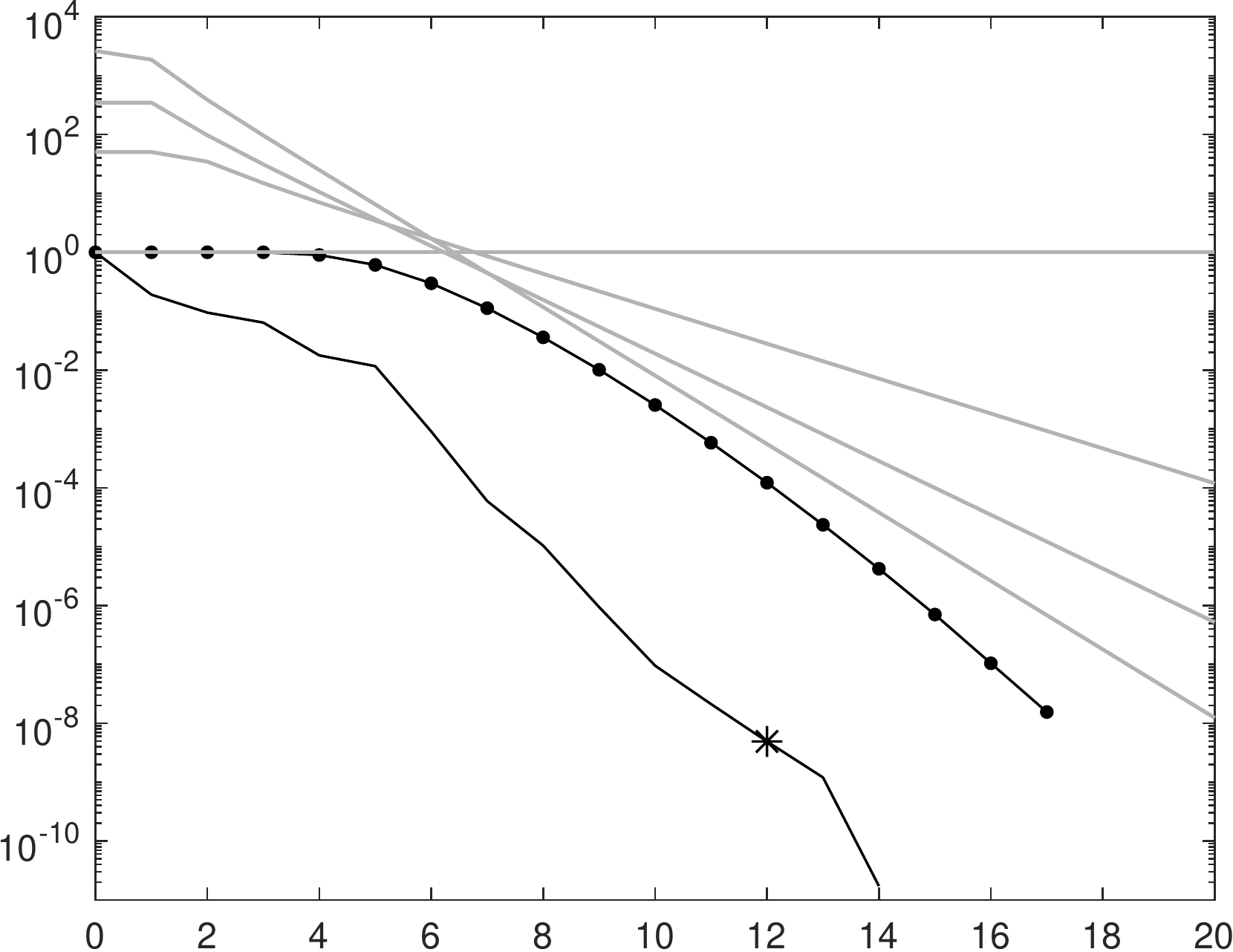}
\begin{picture}(0,0)
\put(-110,30){\footnotesize $\displaystyle{\|\Br_k\|_2\over \|\Br_0\|_2}$}
\put(-51,18){\footnotesize \shortstack{Ideal\\ GMRES}}
\put(-42,102.5){\footnotesize $\eps\to\infty$}
\put(-42,80.5){\footnotesize \rotatebox{-18}{$\eps=10^{-2}$}}
\put(-42,66.5){\footnotesize \rotatebox{-26}{$\eps=10^{-3}$}}
\put(-42,57){\footnotesize \rotatebox{-31}{$\eps=10^{-4}$}}
\put(-32,-8){\footnotesize $k$}
\end{picture}
\end{center}

\vspace*{3pt}
\caption{\label{adapt:int:bounds}
Adaptive convergence estimates for the integration 
matrix~(\ref{intmat}) with $\beta=5/2$ and $n=64$, 
generated at iteration $k=4$ on the left and $k=12$ on the right,
based on pseudospectra of $\BH_k$ shown in Figure~\ref{fig:psaint}.}
\end{figure}

\begin{figure}[h!]
\begin{center}
\includegraphics[scale=0.42]{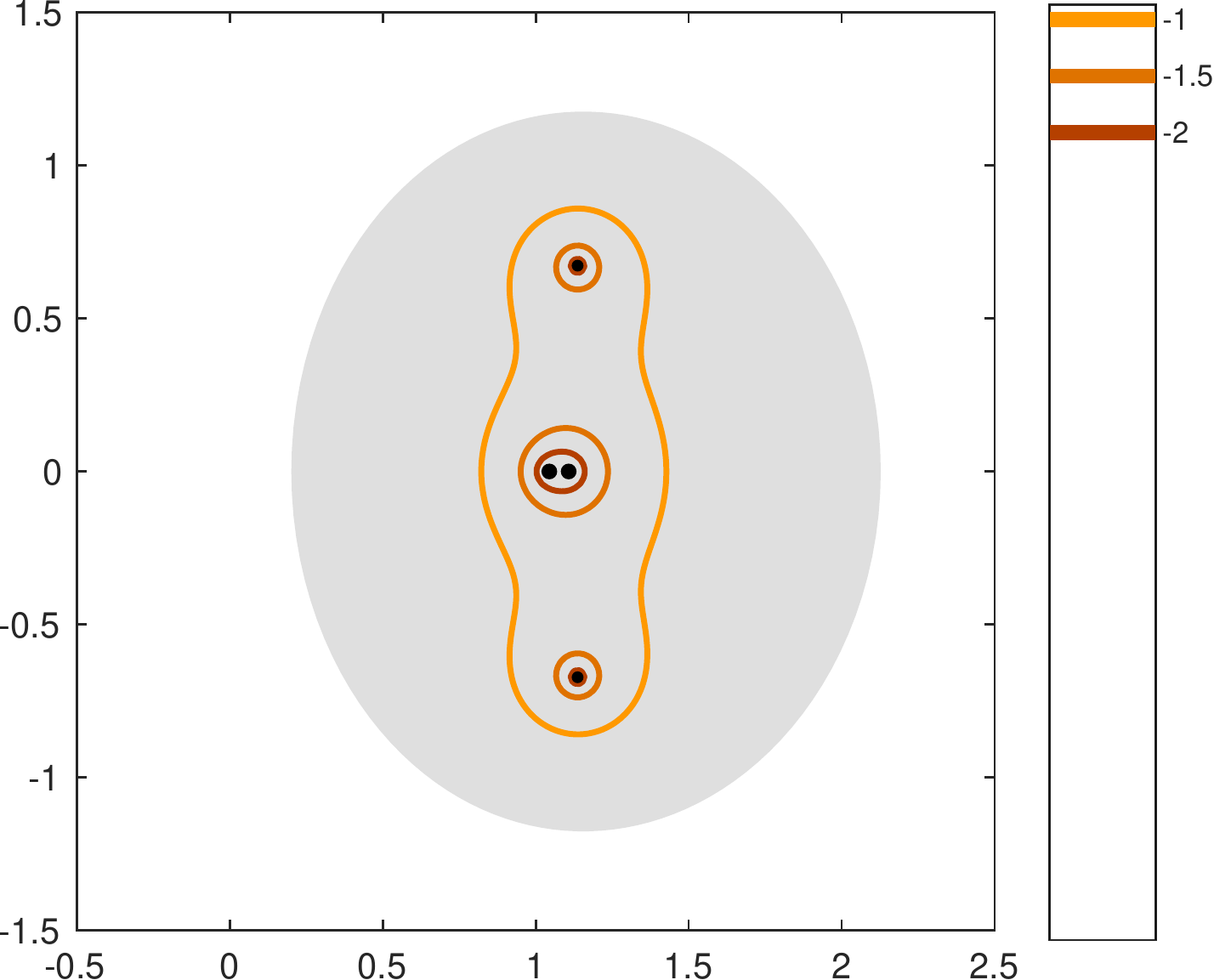}
\begin{picture}(0,0)
\put(-160,13){\footnotesize $k=4$}
\end{picture}
\quad
\includegraphics[scale=0.42]{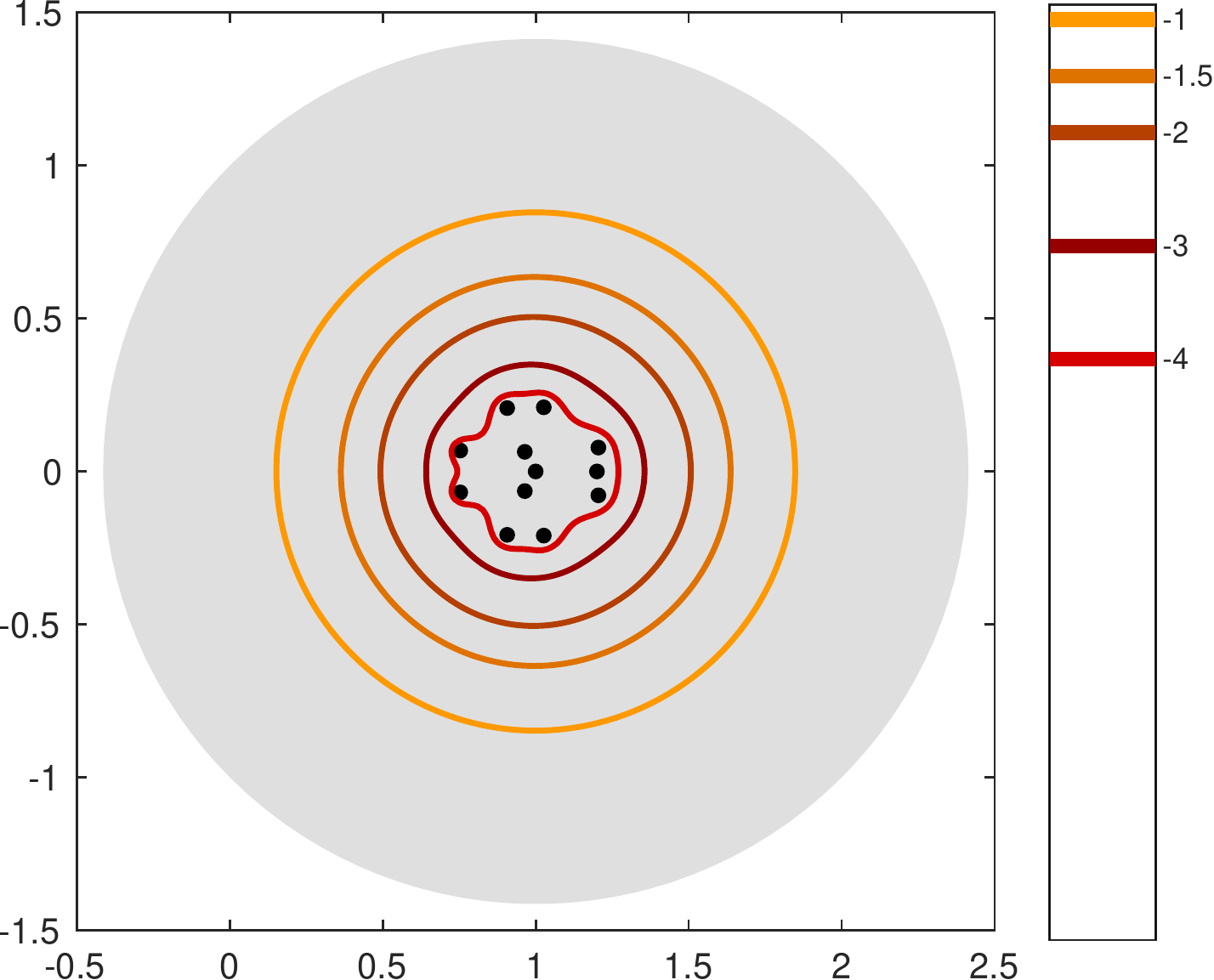}
\begin{picture}(0,0)
\put(-160,13){\footnotesize $k=12$}
\end{picture}
\end{center}

\vspace*{3pt}
\caption{\label{fig:psaint}
At iterations $k=4$ and $k=12$, 
the field of values $\FOV(\BH_k)$ (gray region) and $\eps$-pseudospectra $\PSA(\BH_k)$
(for $\eps=10^{-1}, 10^{-1.5}, 10^{-2}, 10^{-3}, 10^{-4}$)
for the matrix~(\ref{intmat}) with $\beta=5/2$ and $n=64$, 
generated using the same $\Br_0$ whose GMRES convergence
is illustrated in Figure~\ref{adapt:int:bounds}.  
The color levels use the same scale as those in Figure~\ref{Fpsa} 
for the full matrix $\BA$, to facilitate comparison.}
\end{figure}

\subsection{A Practical Example} \label{sec:convdiff}

We illustrate the use of this estimation technique, 
along with the bounds (EV), (EV$'$), (FOV), and (PSA),
for a model problem from fluid dynamics. 
Let $\BA$ be the matrix generated by a 
streamline upwinded Petrov--Galerkin finite element discretization 
of the two-dimensional convection-diffusion
equation,
\[-\nu \Delta u + {\bf w}\cdot\nabla u = f 
   \quad {\rm on}\quad \Omega=[0,1]\times[0,1],\]
with diffusion coefficient $\nu=0.01$, 
constant advection in the vertical direction, ${\bf w} = [0,1]^T$, 
and Dirichlet boundary conditions that induce an interior layer and a
boundary layer.  The solution is approximated using bilinear finite
elements on a regular square grid with $N$ unknowns in each coordinate
direction, yielding a matrix $\BA$ of dimension $n=N^2$.\ \ 
This problem is discussed by
Fischer et al.~\cite{FRSW99}; we apply the upwinding parameter
they suggest, and focus on the case of $N=13$ ($n=169$).  
Though this upwinding parameter can give good approximate
solutions to the partial differential equation, the corresponding
matrix $\BA$ is highly nonnormal.  Its eigenvalues, though very sensitive
to perturbations, are known explicitly for this special wind
direction~\cite{FRSW99}.  These eigenvalues
fall on $N=13$ lines in the complex plane with constant real part,
with $N=13$ eigenvalues per line; the eigenvalues with largest
real part are the most ill-conditioned.
Liesen and Strakos have investigated the influence of the
spectral properties of this class of discretizations 
on GMRES convergence~\cite{LS05a}.

Figure~\ref{supg:bounds} shows GMRES convergence, along with
the bounds (EV), (FOV), and (PSA) for this model problem.
The Ideal GMRES curve was computed using the SDPT3 
Toolbox~\cite{TTT99}, and we compare it to GMRES
convergence for an initial residual derived from
boundary conditions that induce an internal layer and
a boundary layer.


\begin{figure}[b!]
\begin{center}
\includegraphics[scale=0.57]{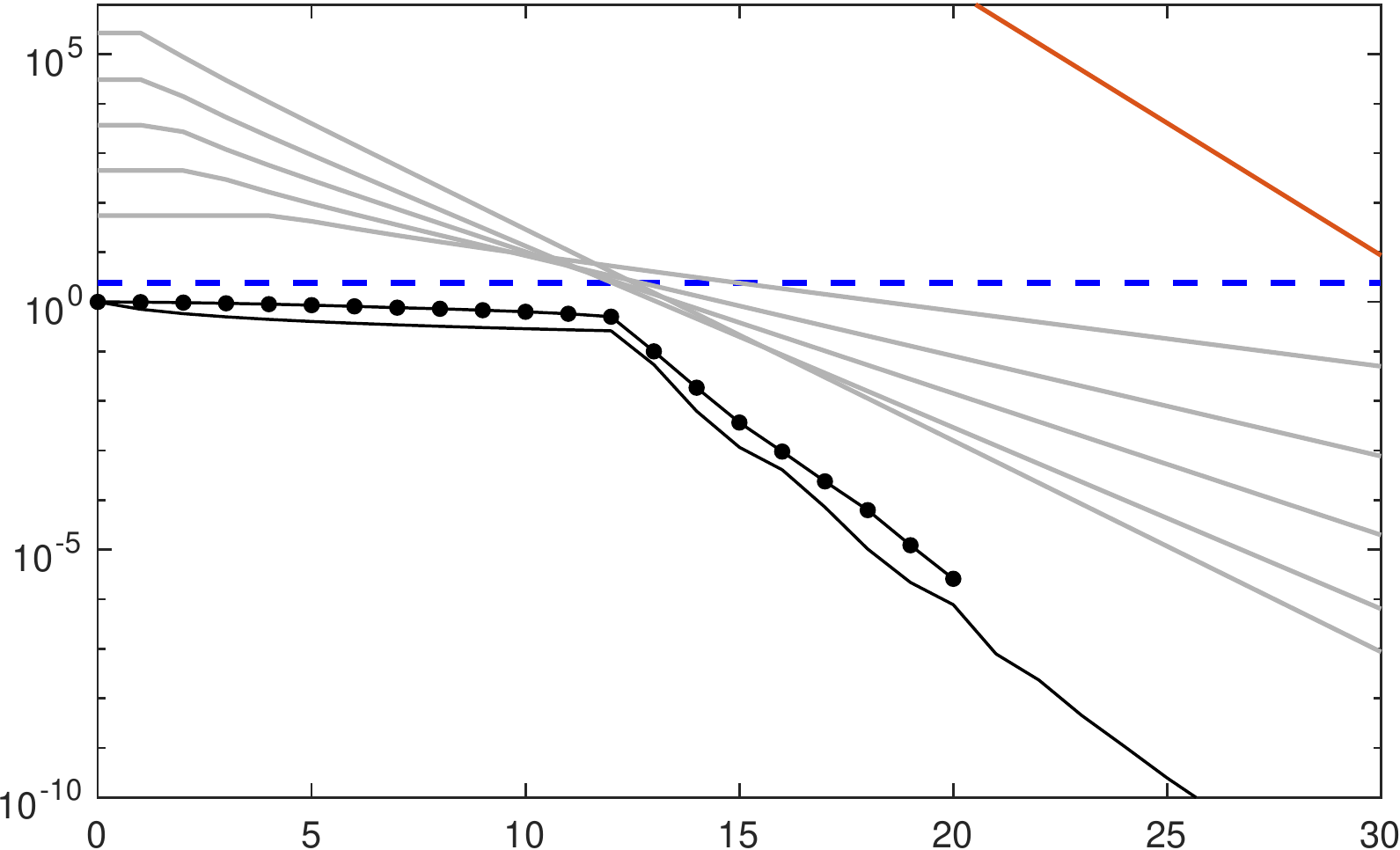}

\begin{picture}(0,0)
 \put(-180,127){\small $\displaystyle{\mingmres\! \norm{p(\BA)}}$}
 \put(-10,0){\small iteration, $k$}
 \put( 131,116){\small (FOV)}
 \put( 90,150){\small (EV)}
 \put( 131,100){\small $\eps=10^{-3}$}
 \put( 131, 84){\small $\eps=10^{-4}$}
 \put( 131, 68){\small $\eps=10^{-5}$}
 \put( 131, 55){\small $\eps=10^{-6}$}
 \put( 131, 46){\small $\eps=10^{-7}$}
 \put(33,37){\small $\displaystyle{\|\Br_k\|_2\over \|\Br_0\|_2}$}
 \put(11,94){\rotatebox{-36}{\small Ideal GMRES}}
\end{picture}
\end{center}

\caption{Convergence bounds for the convection-diffusion problem with $N=13$.}
\label{supg:bounds}
\end{figure}

The matrix $\BA$ has a significant departure from normality,
as illustrated in Figure~\ref{supg:spec}.  
GMRES exhibits a period of slow convergence for
typical initial residuals, followed by a more rapid phase of convergence.
Ernst investigated the field of values bound (FOV) for this 
general problem~\cite{Ern00}.  
Since $\BA$ results from a coercive finite element discretization,
$\FOV(\BA)$ is contained in the open right-half plane and (FOV) 
guarantees convergence.  
Indeed, in this case the bound (FOV) gives $\rfov\approx 0.968$.  
As expected given the initial period of slow convergence,
this bound is descriptive at early iterations but fails to capture the
faster second phase of convergence.
As in Examples~E and~F, the bound~(PSA) does better.  
Though this bound somewhat underestimates the convergence rate attained
during the second phase of convergence
(at least for the values of $\eps$ shown in Figure~\ref{supg:bounds}),
it accurately captures the end of the slow first phase,
a feature that eludes the bounds~(EV) and~(FOV).


\begin{figure}
\begin{center}
\includegraphics[height=1.85in]{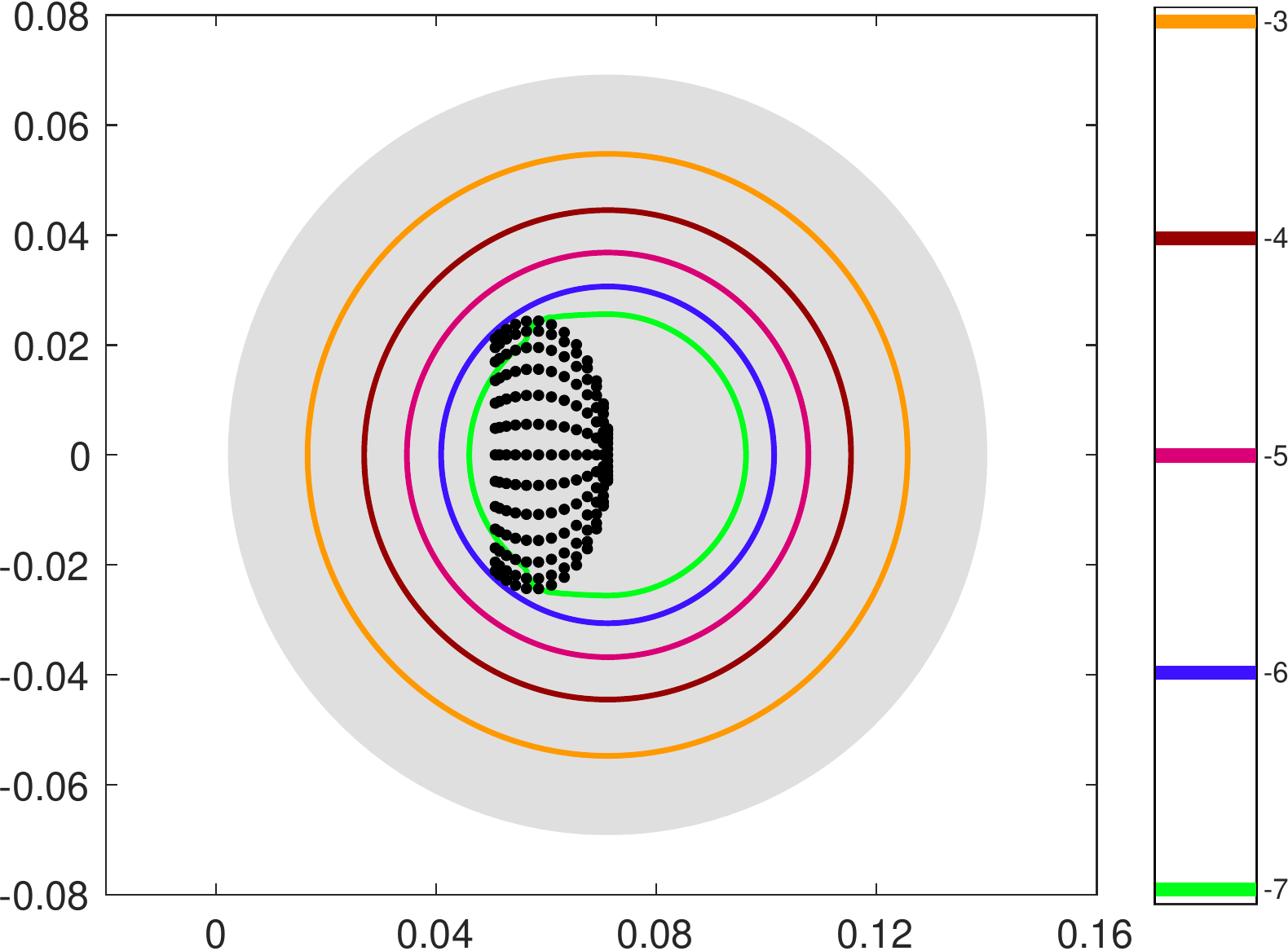}
\quad
\includegraphics[height=1.85in]{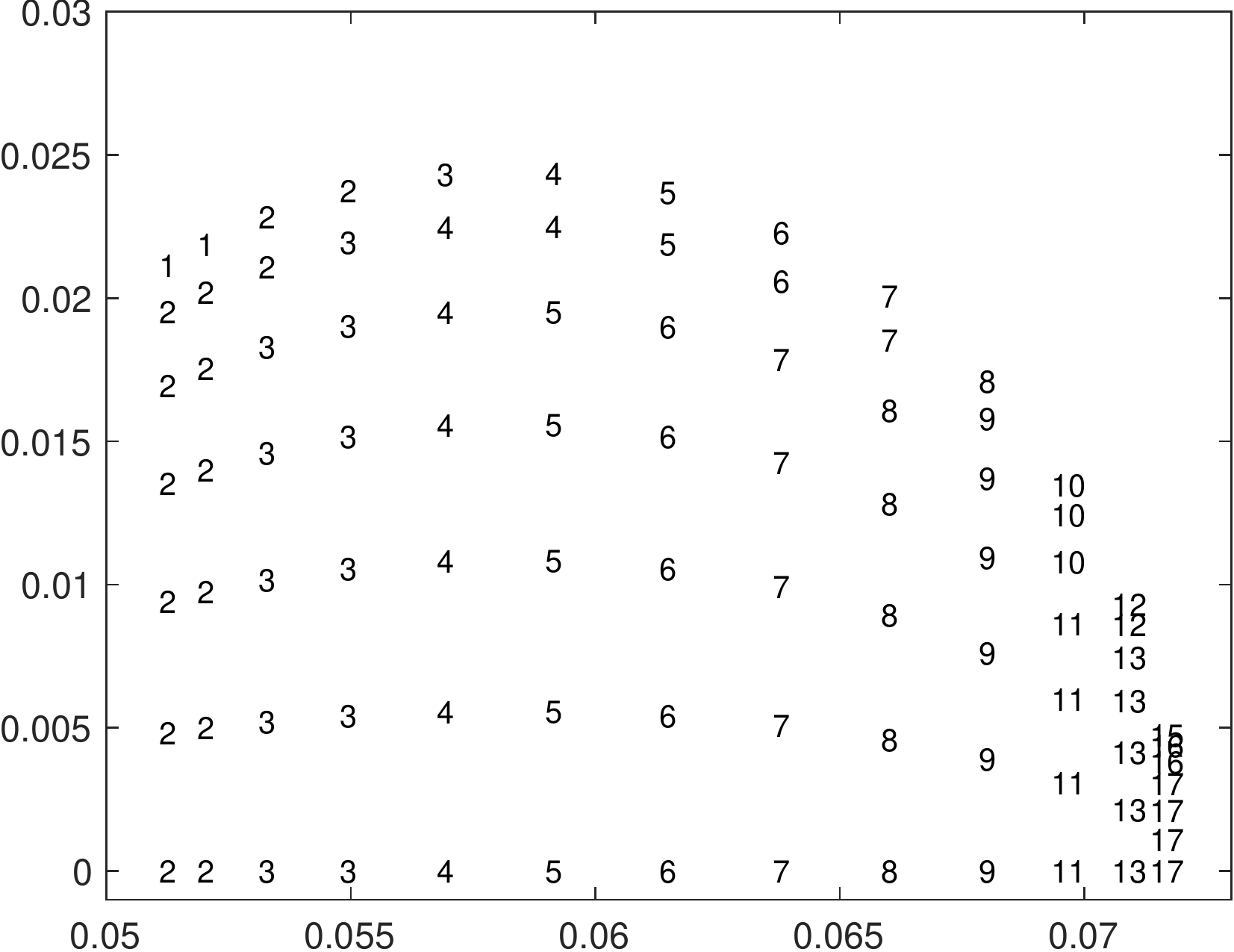}
\end{center}
\caption{ \label{supg:spec}
On the left, the eigenvalues (black dots), field of values (gray region), 
and $\eps$-pseudospectra ($\eps=10^{-3}$, $10^{-4}, \ldots, 10^{-7}$)
for the convection-diffusion problem with $N=13$.
On the right, the condition number of the eigenvalues used in {\rm (EV$'$)}
, displayed as $\log_{10}(\kappa(\lambda_j))$ rounded
to the nearest integer and located at $\lambda_j\in\C$, 
for the eigenvalues on and above the real axis.}
\end{figure}

Explicit formulas are available for the eigenvalues and eigenvectors 
of this matrix~\cite{FRSW99}.
For our chosen parameters all the eigenvalues are distinct, and hence the matrix
is diagonalizable; however, scaling each column of the eigenvector
matrix to have unit 2-norm, we compute $\COND(\BV) \approx 4.6\times 10^{16}$:
the matrix is close to being non-diagonalizable.
To get an upper bound on $\rev$, we use  the convergence rate 
associated with the convex hull of the true eigenvalues of $\BA$; 
Figure~\ref{supg:bounds} shows that this \emph{rate} seems to agree
with the second phase of convergence (as observed in~\cite[p.~191]{FRSW99}),
but $\cev$ is much too large to make the bound descriptive.  
The pseudospectral bounds are better; 
we estimate the rates $\rpsa(\eps)$ by calculating the
convergence rate of an approximate convex hull of $\PSA(\BA)$
and applying the convergence bound for convex sets~(\ref{convex}).

\begin{figure}  
\begin{center}
\includegraphics[scale=0.57]{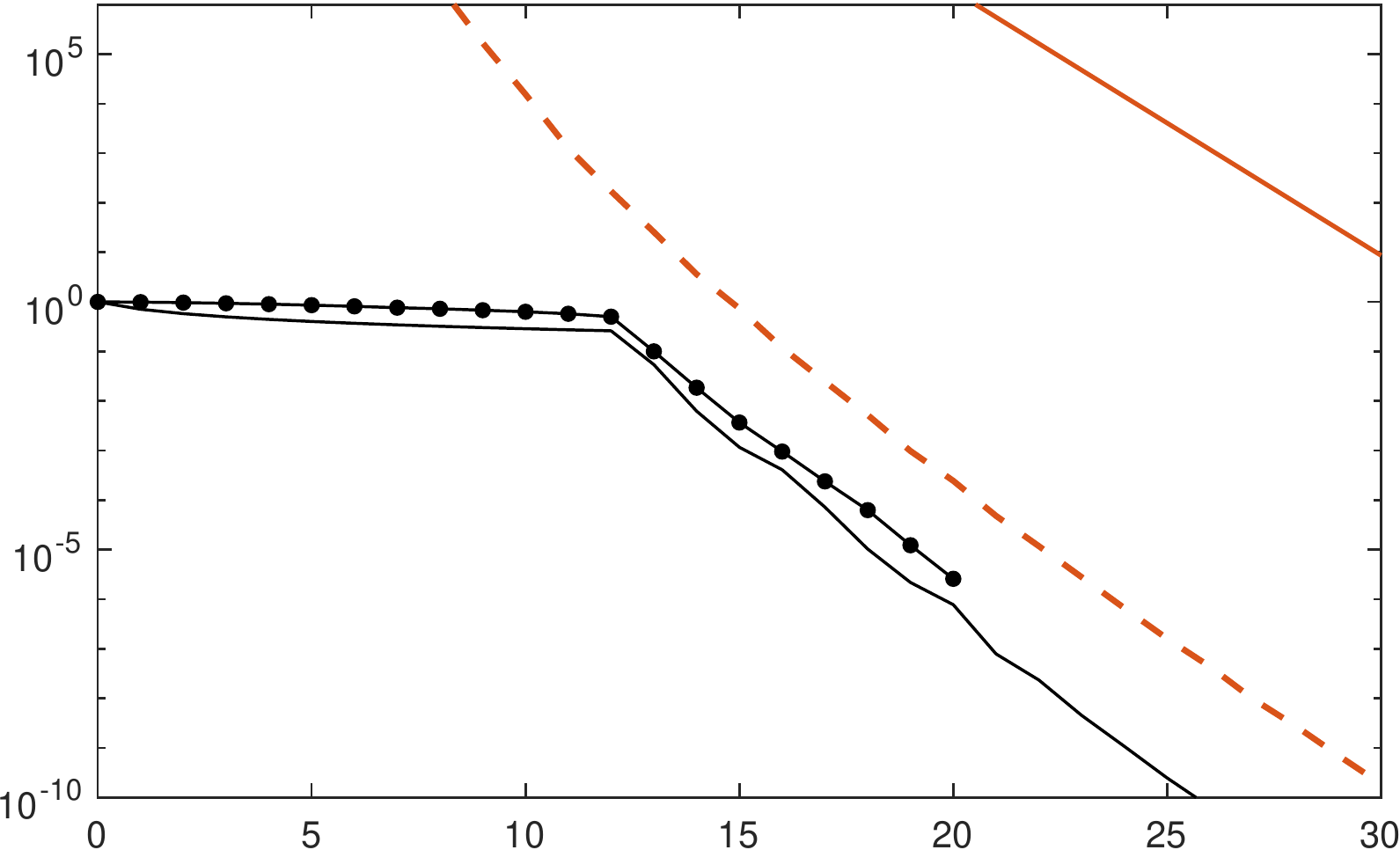}

\begin{picture}(0,0)
 \put(-180,127){\small $\displaystyle{\mingmres\! \norm{p(\BA)}}$}
 \put(-10,0){\small iteration, $k$}
 \put( 90,150){\small (EV)}
 \put(-25,150){\small (EV$'$)}
 \put(33,37){\small $\displaystyle{\|\Br_k\|_2\over \|\Br_0\|_2}$}
 \put(11,94){\rotatebox{-36}{\small Ideal GMRES}}
\end{picture}
\end{center}
\caption{Convergence bound {\rm (EV$'$)} for the 
          convection-diffusion problem with $N=13$.}
\label{supg:evprime}
\end{figure}

The left plot in Figure~\ref{supg:spec} shows the eigenvalues, 
field of values, and pseudospectra for this example.  
While the eigenvalues of $\BA$ are ill-conditioned, the degree of 
ill-conditioning is not uniform across the spectrum: the eigenvalues 
closest to the origin are less sensitive than the rightmost eigenvalues.
The right plot in Figure~\ref{supg:spec} plots
$\log_{10}(\kappa(\lambda_j))$ (rounded to the nearest integer)
at the location of $\lambda_j$, for those eigenvalues on or above
the real axis.
Given the extreme range of $\kappa(\lambda_j)$ values, 
how does the bound (EV$'$) perform in this situation?  
Figure~\ref{supg:evprime} illustrates that (EV$'$) can handle such disparate 
ill-conditioning quite well.  In the second phase of convergence, 
(EV$'$) is more accurate than the three standard bounds.  For this example, 
the eigenvalue condition numbers were computed from explicit
formulas for left and right eigenvectors, and (EV$'$)
was calculated in quadruple precision arithmetic.

\begin{figure}[t!]  
\begin{center}
\includegraphics[scale=0.35]{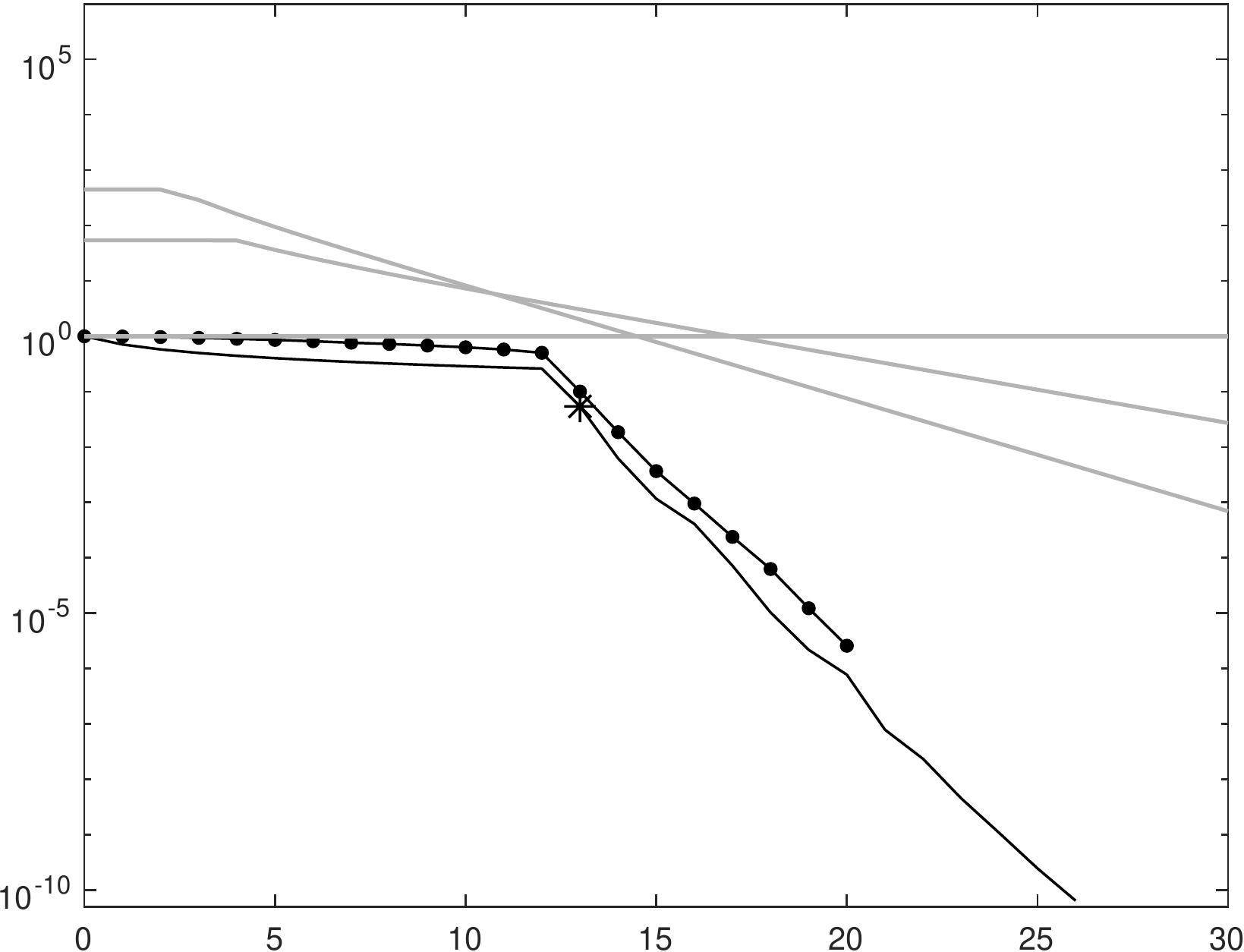}
\begin{picture}(0,0)
\put(-71,18){\footnotesize $\displaystyle{\|\Br_k\|_2\over \|\Br_0\|_2}$}
\put(-87,72){\footnotesize \rotatebox{-42}{Ideal GMRES}}
\put(-42,83.5){\footnotesize $\eps\to\infty$}
\put(-42,71.5){\footnotesize \rotatebox{-14}{$\eps=10^{-3}$}}
\put(-42,62.7){\footnotesize \rotatebox{-18}{$\eps=10^{-4}$}}
\put(-46,-8){\footnotesize $k$}
\end{picture}
\quad
\includegraphics[scale=0.35]{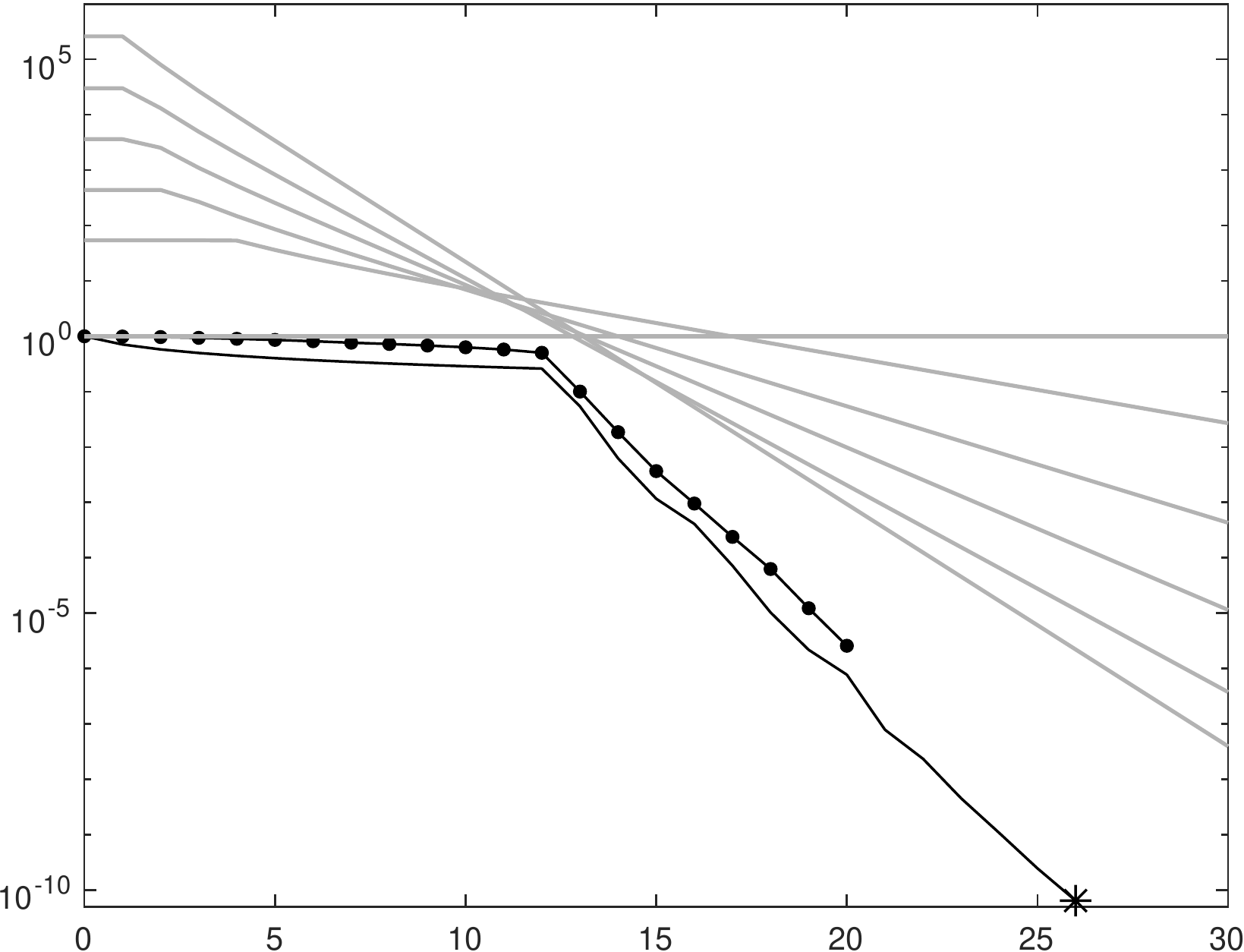}
\begin{picture}(0,0)
\put(-71,18){\footnotesize $\displaystyle{\|\Br_k\|_2\over \|\Br_0\|_2}$}
\put(-87,72){\footnotesize \rotatebox{-43}{Ideal GMRES}}
\put(-42,83.5){\footnotesize $\eps\to\infty$}
\put(-42,77.5){\footnotesize \rotatebox{-9}{$\eps=10^{-3}$}}
\put(-42,69.5){\footnotesize \rotatebox{-17}{$\eps=10^{-4}$}}
\put(-42,62.5){\footnotesize \rotatebox{-23}{$\eps=10^{-5}$}}
\put(-42,55.25){\footnotesize \rotatebox{-28}{$\eps=10^{-6}$}}
\put(-42,42){\footnotesize \rotatebox{-33}{$\eps=10^{-7}$}}
\put(-46,-8){\footnotesize $k$}
\end{picture}
\end{center}
\caption{Adaptive convergence bounds for the convection-diffusion 
problem with $N=13$, 
generated at iteration $k=13$  on the left and $k=26$ on the right,
based on pseudospectra of $\BH_k$ shown in Figure~\ref{fig:psasupga}.}
\label{supg:adapt}
\end{figure}

\begin{figure}
\begin{center}
\includegraphics[scale=0.395]{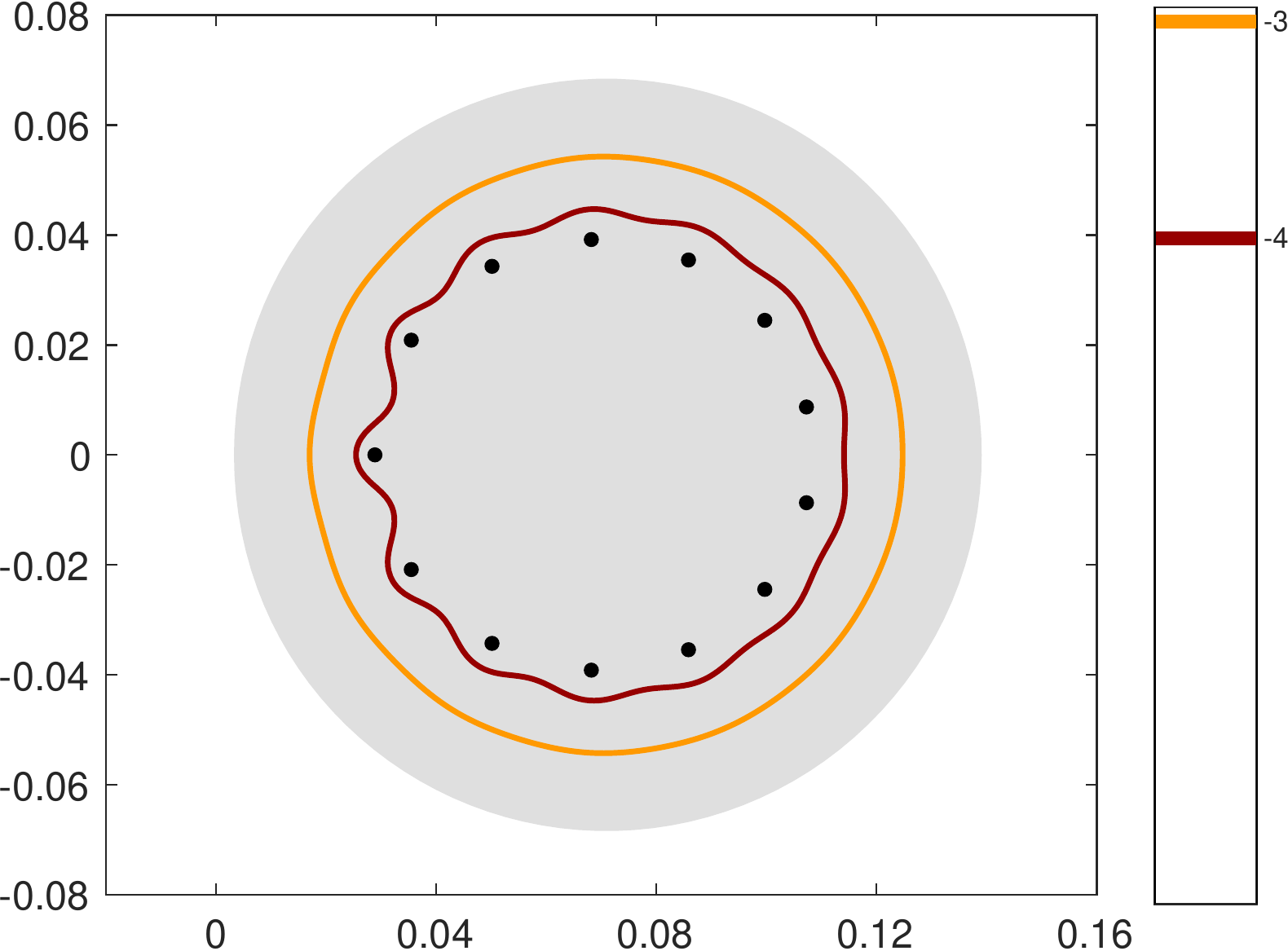}
\begin{picture}(0,0)
\put(-162,13){\footnotesize $k=13$}
\end{picture}
\includegraphics[scale=0.395]{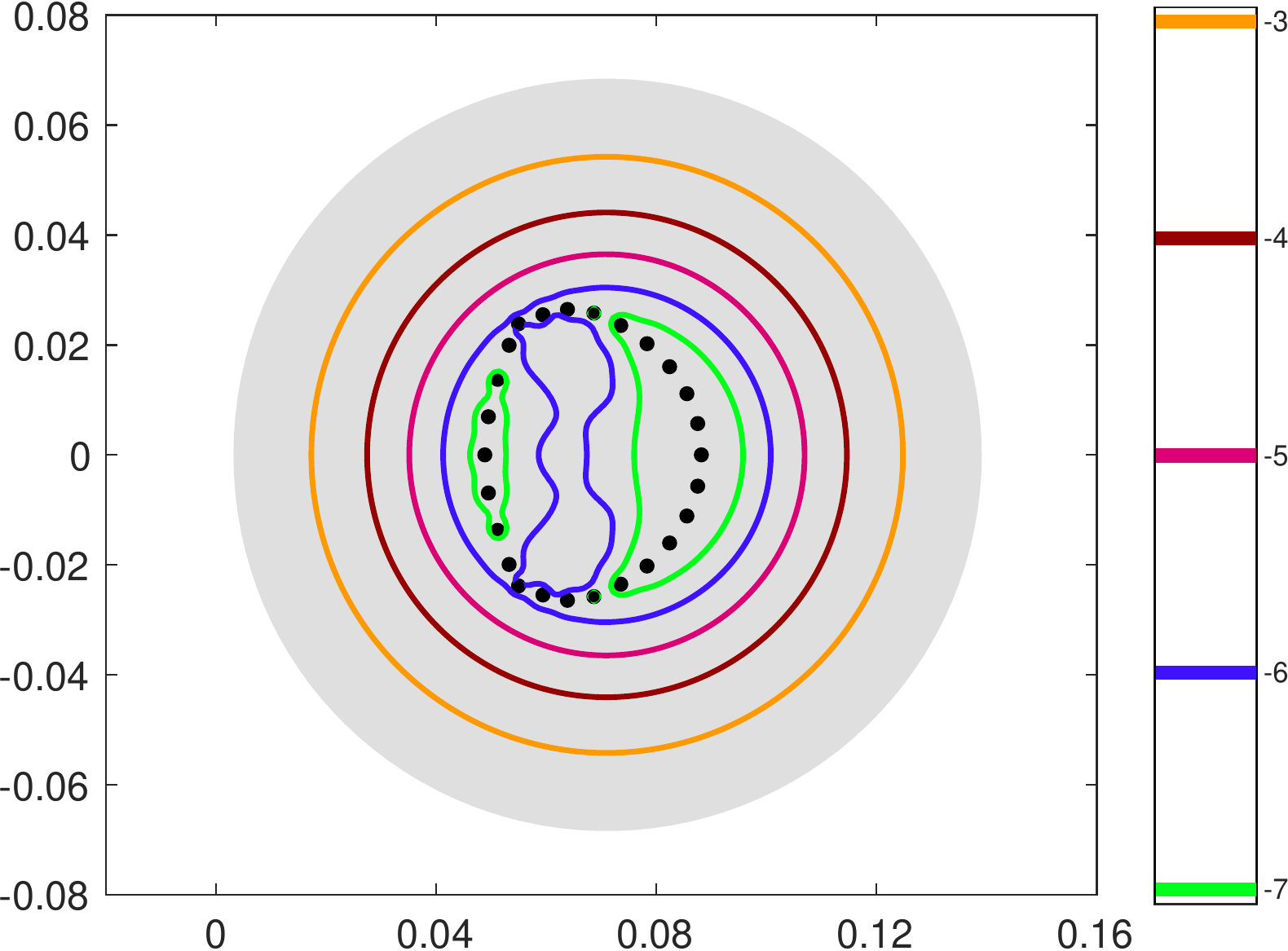}
\begin{picture}(0,0)
\put(-162,13){\footnotesize $k=26$}
\end{picture}
\end{center}

\vspace*{3pt}
\caption{\label{fig:psasupga}
At iterations $k=13$ and $k=26$,
the field of values $\FOV(\BH_k)$ (gray region) and $\eps$-pseudospectra $\PSA(\BH_k)$
(for $\eps=10^{-3}, 10^{-4}, 10^{-5}, 10^{-6}, 10^{-7}$)
for the convection-diffusion problem with $N=13$,
generated using the $\Br_0$ whose GMRES convergence
is illustrated in Figure~\ref{supg:adapt}.
The color levels use the same scale as those in Figure~\ref{supg:spec}
for the full matrix $\BA$, to facilitate comparison.}
\end{figure}

We test the pseudospectral estimates described earlier in this
section for $N=13$ with the same initial residual described above.
The adaptive estimates taken during the initial phase of slow convergence
give little hint of future convergence behavior; the approximate
pseudospectra do not improve much from iteration to iteration
during these early steps.  For $k=13$, at the onset of
more rapid convergence, $\PSA(\BH_k)\approx \PSA(\BA)$ for
$\eps\ge 10^{-4}$ and one gets an indication of the improved
convergence to come.  By the time the convergence criterion
is satisfied at $k=26$, $\PSA(\BH_k)\approx \PSA(\BA)$ for
those values of $\eps$ relevant to the bound (PSA).\ \ 
Figure~\ref{supg:adapt} shows these convergence estimates
for $k=13$ and $k=26$, based on the pseudospectra shown in  
Figure~\ref{fig:psasupga}.  Notice how the agreement between
these pseudospectra of $\BH_k$ and those of the full matrix
$\BA$ (shown in Figure~\ref{supg:spec}) improve as $k$ increases,
in agreement with the observations of Toh and Trefethen~\cite{tt96}.

\section{Summary}

We have explored some of the relative merits of convergence bounds 
based on eigenvalues (with the eigenvector condition number),
the field of values, and pseudospectra.  In particular,
these bounds have distinct weaknesses that indicate situations
in which one bound may be preferred over the others.  
The standard bounds are global statements that can be
refined; for (EV$'$) and (FOV$'$) this localization
introduces spectral projector norms.
Pseudospectra provide a convenient tool for bridging
between the eigenvalues and the field of values, but they
can be expensive to compute.  Approximate pseudospectra
drawn from the Arnoldi process yield convergence estimates
at a fraction of the cost of full pseudospectral computation.

\section*{Acknowledgements}

I thank Andy Wathen for guiding this research and suggesting
numerous improvements to this presentation.  I am also grateful for 
Nick Trefethen's many helpful comments.  The title and the format of Section~3
were inspired by a paper by Nachtigal, Reddy, and Trefethen~\cite{nrt92}.
I thank Anne Greenbaum for providing a copy of reference~\cite{eiermann97},
and Chris Beattie, Bernd Fischer, and Henk van der Vorst for 
stimulating discussions related to this research.  
The helpful referees who reviewed the original version of this 
document pointed out several references, and made other suggestions 
that have improved this work.


\begin{thebibliography}{10}

\bibitem{bf60}
{\sc F.~L. Bauer and C.~T. Fike}, {\em Norms and exclusion theorems}, Numer.\
  Math.,  (1960), pp.~137--141.

\bibitem{BG05}
{\sc A.~B{\"o}ttcher and S.~M. Grudsky}, {\em Spectral Properties of Banded
  Toeplitz Matrices}, SIAM, Philadelphia, 2005.

\bibitem{bh96}
{\sc T.~Braconnier and N.~J. Higham}, {\em Computing the field of values and
  pseudospectra using the {Lanczos} method with continuation}, BIT, 36 (1996),
  pp.~422--440.

\bibitem{Cro07}
{\sc M.~Crouzeix}, {\em Numerical range and functional calculus in {Hilbert}
  space}, J.~Functional Anal., 244 (2007), pp.~668--690.

\bibitem{CG19}
{\sc M.~Crouzeix and A.~Greenbaum}, {\em Spectral sets: numerical range and
  beyond}, SIAM J.~Matrix Anal.\ Appl., 40 (2019), pp.~1087--1101.

\bibitem{CP17}
{\sc M.~Crouzeix and C.~Palencia}, {\em The numerical range is a
  $(1+\sqrt{2})$-spectral set}, SIAM J.~Matrix Anal.\ Appl., 38 (2017),
  pp.~649--655.

\bibitem{driscoll96}
{\sc T.~A. Driscoll}, {\em A \textsc{Matlab} toolbox for {Schwarz--Christoffel}
  mapping}, ACM Trans.\ Math.\ Software, 22 (1996), pp.~168--186.
\newblock For accompanying software, see \\ {\tt
  https://tobydriscoll.net/project/sc-toolbox/}.

\bibitem{chebfun}
{\sc T.~A. Driscoll, N.~Hale, and L.~N. Trefethen}, eds., {\em Chebfun Guide},
  Pafnuty Publications, Oxford, 2014.

\bibitem{dtt98}
{\sc T.~A. Driscoll, K.-C. Toh, and L.~N. Trefethen}, {\em From potential
  theory to matrix iterations in six steps}, SIAM Review, 40 (1998),
  pp.~547--578.

\bibitem{eiermann89}
{\sc M.~Eiermann}, {\em On semiiterative methods generated by {Faber}
  polynomials}, Numer.\ Math., 56 (1989), pp.~139--156.

\bibitem{eiermann93}
\leavevmode\vrule height 2pt depth -1.6pt width 23pt, {\em Fields of values and
  iterative methods}, Linear Algebra Appl., 180 (1993), pp.~167--197.

\bibitem{eiermann97}
\leavevmode\vrule height 2pt depth -1.6pt width 23pt, {\em Field of values and
  iterative methods}.
\newblock Slides from the Oberwolfach Conference on Iterative Methods,
  Oberwolfach, Germany, April 1997.

\bibitem{EE01}
{\sc M.~Eiermann and O.~G. Ernst}, {\em Geometric aspects in the theory of
  {Krylov} subspace methods}, Acta Numerica, 10 (2001), pp.~251--312.

\bibitem{ees83}
{\sc S.~C. Eisenstat, H.~C. Elman, and M.~H. Schultz}, {\em Variational
  iterative methods for nonsymmetric systems of linear equations}, SIAM
  J.~Numer.\ Anal., 20 (1983), pp.~345--357.

\bibitem{Emb03}
{\sc M.~Embree}, {\em The tortoise and the hare restart {GMRES}}, SIAM Review,
  45 (2003), pp.~259--266.

\bibitem{ET99b}
{\sc M.~Embree and L.~N. Trefethen}, {\em Green's functions for multiply
  connected domains via conformal mapping}, SIAM Review, 41 (1999),
  pp.~745--761.

\bibitem{Ern00}
{\sc O.~G. Ernst}, {\em Residual-minimizing {Krylov} subspace methods for
  stabilized discretizations of convection-diffusion equations}, SIAM J.~Matrix
  Anal.\ Appl., 21 (2000), pp.~1079--1101.

\bibitem{fischer96}
{\sc B.~Fischer}, {\em Polynomial Based Iteration Methods for Symmetric Linear
  Systems}, Wiley--Teubner, Chichester, 1996.

\bibitem{FRSW99}
{\sc B.~Fischer, A.~Ramage, D.~J. Silvester, and A.~J. Wathen}, {\em On
  parameter choice and iterative convergence for stabilised discretizations of
  advection--diffusion problems}, Comp.\ Methods Appl.\ Mech.\ Eng., 179
  (1999), pp.~179--195.

\bibitem{freund92}
{\sc R.~W. Freund}, {\em Quasi-kernel polynomials and their use in
  {non-Hermitian} matrix iterations}, J.~Comp.\ Appl.\ Math., 43 (1992),
  pp.~135--158.

\bibitem{fn91}
{\sc R.~W. Freund and N.~M. Nachtigal}, {\em {QMR}: A quasi-minimal residual
  method for {non-Hermitian} linear systems}, Numer.\ Math., 60 (1991),
  pp.~315--339.

\bibitem{gr99}
{\sc S.~Goossens and D.~Roose}, {\em {Ritz} and harmonic {Ritz} values and the
  convergence of {FOM} and {GMRES}}, Numer.\ Linear Algebra Appl., 6 (1999),
  pp.~281--293.

\bibitem{GT19}
{\sc A.~Gopal and L.~N. Trefethen}, {\em Representation of conformal maps by
  rational functions}, Numer.\ Math., 142 (2019), pp.~359--382.

\bibitem{greenbaum97}
{\sc A.~Greenbaum}, {\em Iterative Methods for Solving Linear Systems}, SIAM,
  Philadelphia, 1997.

\bibitem{gps96}
{\sc A.~Greenbaum, V.~Pt\'{a}k, and Z.~Strako\v{s}}, {\em Any nonincreasing
  convergence curve is possible for {GMRES}}, SIAM J.~Matrix Anal.\ Appl., 17
  (1996), pp.~465--469.

\bibitem{gs94}
{\sc A.~Greenbaum and Z.~Strako\v{s}}, {\em Matrices that generate the same
  {Krylov} residual spaces}, in Recent Advances in Iterative Methods, G.~Golub,
  A.~Greenbaum, and M.~Luskin, eds., Springer-Verlag, New York, 1994,
  pp.~95--118.

\bibitem{GT93}
{\sc A.~Greenbaum and L.~N. Trefethen}, {\em Do the pseudospectra of a matrix
  determine its behavior?}, Tech. Rep. TR 93-1371, Computer Science Department,
  Cornell University, August 1993.

\bibitem{gustrao}
{\sc K.~E. Gustafson and D.~K.~M. Rao}, {\em Numerical Range: The Field of
  Values of Linear Operators and Matrices}, Springer-Verlag, New York, 1997.

\bibitem{ht93}
{\sc D.~J. Higham and L.~N. Trefethen}, {\em Stiffness of {ODE}s}, BIT, 33
  (1993), pp.~285--303.

\bibitem{HighTMT}
{\sc N.~J. Higham}, {\em The test matrix toolbox for \textsc{Matlab}
  (version~3.0)}, Tech. Rep. Numerical Analysis Report No.~276, University of
  Manchester, September 1995.

\bibitem{hille2}
{\sc E.~Hille}, {\em Analytic Function Theory}, vol.~2, Chelsea, New York,
  1962.

\bibitem{hojo2}
{\sc R.~A. Horn and C.~R. Johnson}, {\em Topics in Matrix Analysis}, Cambridge
  University Press, Cambridge, 1991.

\bibitem{Ips00}
{\sc I.~C.~F. Ipsen}, {\em Expressions and bounds for the {GMRES} residual},
  BIT, 40 (2000), pp.~524--535.

\bibitem{Jou94a}
{\sc W.~Joubert}, {\em On the convergence behavior of the restarted {GMRES}
  algorithm for solving nonsymmetric linear systems}, Numer.\ Linear Algebra
  Appl., 1 (1994), pp.~427--447.

\bibitem{kato}
{\sc T.~Kato}, {\em Perturbation Theory for Linear Operators}, Springer-Verlag,
  Berlin, corrected second~ed., 1980.

\bibitem{kp67}
{\sc T.~K\"ovari and C.~Pommerenke}, {\em On {Faber} polynomials and {Faber}
  expansions}, Math. Zeit., 99 (1967), pp.~193--206.

\bibitem{arpack}
{\sc R.~B. Lehoucq, D.~C. Sorensen, and C.~Yang}, {\em {ARPACK} Users' Guide:
  Solution of Large-Scale Eigenvalue Problems with Implicitly Restarted
  {Arnoldi} Methods}, SIAM, Philadelphia, 1998.

\bibitem{Lie00}
{\sc J.~Liesen}, {\em Computable convergence bounds for {GMRES}}, SIAM
  J.~Matrix Anal.\ Appl., 21 (2000), pp.~882--903.

\bibitem{LS05a}
{\sc J.~Liesen and Z.~Strako\v{s}}, {\em {GMRES} convergence analysis for a
  convection-diffusion model problem}, SIAM J.~Sci.\ Comput., 26 (2005),
  pp.~1989--2009.

\bibitem{nrt92}
{\sc N.~M. Nachtigal, S.~C. Reddy, and L.~N. Trefethen}, {\em How fast are
  nonsymmetric matrix iterations?}, SIAM J.~Matrix Anal.\ Appl., 13 (1992),
  pp.~778--795.

\bibitem{RR11}
{\sc T.~Ransford and J.~Rostand}, {\em Pseudospectra do not determine norm
  behavior, even for matrices with only simple eigenvalues}, Linear Algebra
  Appl., 435 (2011), pp.~3024--3028.

\bibitem{rt92}
{\sc L.~Reichel and L.~N. Trefethen}, {\em Eigenvalues and pseudo-eigenvalues
  of {Toeplitz} matrices}, Linear Algebra Appl., 162--164 (1992), pp.~153--185.

\bibitem{Saa03}
{\sc Y.~Saad}, {\em Iterative Methods for Sparse Linear Systems}, SIAM,
  Philadelphia, second~ed., 2003.

\bibitem{Saa11}
\leavevmode\vrule height 2pt depth -1.6pt width 23pt, {\em Numerical Methods
  for Large Eigenvalue Problems}, SIAM, Philadelphia, second~ed., 2011.

\bibitem{ss86}
{\sc Y.~Saad and M.~H. Schultz}, {\em {GMRES}: A generalized minimal residual
  algorithm for solving nonsymmetric linear systems}, SIAM J.~Sci.\ Stat.\
  Comput., 7 (1986), pp.~856--869.

\bibitem{SEM13}
{\sc J.~A. Sifuentes, M.~Embree, and R.~B. Morgan}, {\em {GMRES} convergence
  for perturbed coefficient matrices, with application to approximate deflation
  preconditioning}, SIAM J.~Matrix Anal.\ Appl., 34 (2013), pp.~1066--1088.

\bibitem{SG96}
{\sc V.~Simoncini and E.~Gallopoulos}, {\em Convergence properties of block
  {GMRES} and matrix polynomials}, Linear Algebra Appl., 247 (1996),
  pp.~97--119.

\bibitem{SG98}
\leavevmode\vrule height 2pt depth -1.6pt width 23pt, {\em Transfer functions
  and resolvent norm approximation of large matrices}, Elec.\ Trans.\ Numer.\
  Anal., 7 (1998), pp.~190--201.

\bibitem{Sto32}
{\sc M.~H. Stone}, {\em Linear Transformations in {Hilbert} Space}, American
  Mathematical Society, New York, 1932.

\bibitem{toh96}
{\sc K.-C. Toh}, {\em Matrix Approximation Problems and Nonsymmetric Iterative
  Methods}, PhD thesis, Cornell University, August 1996.

\bibitem{toh97}
\leavevmode\vrule height 2pt depth -1.6pt width 23pt, {\em {GMRES} vs. ideal
  {GMRES}}, SIAM J.~Matrix Anal.\ Appl., 18 (1997), pp.~30--36.

\bibitem{TTT99}
{\sc K.-C. Toh, M.~J. Todd, and R.~H. T\"ut\"unc\"u}, {\em {SDPT3}---a {Matlab}
  software package for semidefinite programming}, Opt.\ Meth.\ Softw., 11
  (1999), pp.~545--581.
\newblock Software available at\\ \verb|https://github.com/sqlp/sdpt3|.

\bibitem{tt96}
{\sc K.-C. Toh and L.~N. Trefethen}, {\em Calculation of pseudospectra by the
  {Arnoldi} iteration}, SIAM J.~Sci.\ Comput., 17 (1996), pp.~1--15.

\bibitem{tt99}
\leavevmode\vrule height 2pt depth -1.6pt width 23pt, {\em The {Chebyshev}
  polynomials of a matrix}, SIAM J.~Matrix Anal.\ Appl., 20 (1999),
  pp.~400--419.

\bibitem{lnt90}
{\sc L.~N. Trefethen}, {\em Approximation theory and numerical linear algebra},
  in Algorithms for Approximation~II, J.~C. Mason and M.~G. Cox, eds., Chapman
  and Hall, London, 1990.

\bibitem{lnt92}
{\sc L.~N. Trefethen}, {\em Pseudospectra of matrices}, in Numerical
  Analysis~1991, D.~F. Griffiths and G.~A. Watson, eds., Longman Scientific and
  Technical, Harlow, Essex, UK, 1992, pp.~234--266.

\bibitem{lnt99}
\leavevmode\vrule height 2pt depth -1.6pt width 23pt, {\em Computation of
  pseudospectra}, Acta Numerica,  (1999), pp.~247--295.

\bibitem{TB97}
{\sc L.~N. Trefethen and D.~Bau, III}, {\em Numerical Linear Algebra}, SIAM,
  Philadelphia, 1997.

\bibitem{TE05}
{\sc L.~N. Trefethen and M.~Embree}, {\em Spectra and Pseudospectra: The
  Behavior of Nonnormal Matrices and Operators}, Princeton University Press,
  Princeton, NJ, 2005.

\bibitem{vds69}
{\sc A.~van~der Sluis}, {\em Condition numbers and equilibration of matrices},
  Numer.\ Math., 14 (1969), pp.~14--23.

\bibitem{vdSV86}
{\sc A.~van~der Sluis and H.~van~der Vorst}, {\em The rate of convergence of
  conjugate gradients}, Numer.\ Math., 48 (1986), pp.~543--560.

\bibitem{vdvv93}
{\sc H.~A. van~der Vorst and C.~Vuik}, {\em The superlinear convergence
  behaviour of {GMRES}}, J.~Comp.\ Appl.\ Math., 48 (1993), pp.~327--341.

\bibitem{aep}
{\sc J.~H. Wilkinson}, {\em The Algebraic Eigenvalue Problem}, Oxford
  University Press, Oxford, 1965.

\bibitem{Wri02b}
{\sc T.~G. Wright}, {\em Algorithms and {Software} for {Pseudospectra}}, 2002.
\newblock D.Phil.\ thesis, Oxford University.

\bibitem{Wri02a}
\leavevmode\vrule height 2pt depth -1.6pt width 23pt, {\em {EigTool}}, 2002.
\newblock Software available at {\tt https://github.com/eigtool}.

\bibitem{WT01b}
{\sc T.~G. Wright and L.~N. Trefethen}, {\em Large-scale computation of
  pseudospectra using {ARPACK} and {eigs}}, SIAM J.~Sci.\ Comput., 23 (2001),
  pp.~591--605.

\bibitem{WT02}
\leavevmode\vrule height 2pt depth -1.6pt width 23pt, {\em Pseudospectra of
  rectangular matrices}, IMA J.~Numer.\ Anal., 22 (2002), pp.~501--519.

\end{thebibliography}
\end{document}